\newcounter{denseversion}
\newcounter{comments}
\newcounter{authorcounter}
\newcounter{adresscounter}
\def\title#1{\gdef\@title{#1}}
\def\@title{}
\def\subtitle#1{\gdef\@subtitle{#1}}
\def\@subtitle{}
\def\authortagsused{0}
\def\adresstag#1{\if!#1!\else$^{\;#1\;}$\fi}
\def\@authorsep#1{
  \ifnum\value{authorcounter}=#1 and \else\unskip, \fi
}
\renewcommand{\author}[2][]{
  \stepcounter{authorcounter}
  \if!#1!\else\gdef\authortagsused{1}\fi
  \ifnum\value{authorcounter}=1
    \def\@authorstringa{#2\adresstag{#1}}
    \def\@authorstringb{#2}
    \def\@authorstringc{#2\adresstag{#1}}
  \else
    \ifnum\value{authorcounter}=2
      \g@addto@macro\@authorstringa{\@authorsep{2}#2\adresstag{#1}}
      \g@addto@macro\@authorstringb{\@authorsep{2}#2}
      \g@addto@macro\@authorstringc{\\#2\adresstag{#1}}
    \else
      \g@addto@macro\@authorstringa{\@authorsep{3}#2\adresstag{#1}}
      \g@addto@macro\@authorstringb{\@authorsep{3}#2}
      \g@addto@macro\@authorstringc{\\#2\adresstag{#1}}
    \fi
  \fi}
\def\@author{\ifnum\value{denseversion}=0\@authorstringa\else\@authorstringb\fi}
\def\@adressstringa{}
\def\@adressstringb{}
\newcommand{\adress}[2][]{
  \stepcounter{adresscounter}
  \ifnum\value{adresscounter}=1
    \g@addto@macro\@adressstringa{\ifnum\authortagsused=0\def\br{\\}\else\def\br{, }\fi\adresstag{#1}#2}
    \g@addto@macro\@adressstringb{\def\br{\\}\adresstag{#1}\parbox[t]{14cm}{#2}}
  \else
    \g@addto@macro\@adressstringa{\\[\bigskipamount]\adresstag{#1}#2}
    \g@addto@macro\@adressstringb{\\[\medskipamount]\adresstag{#1}\parbox[t]{14cm}{#2}}
  \fi}
\def\preprint#1{\gdef\@preprint{#1}}
\def\@preprint{}
\def\keywords#1{\gdef\@keywords{#1}}
\def\@keywords{}
\def\msc#1{\gdef\@msc{#1}}
\def\@msc{}
\def\email#1{
   \gdef\@email{#1}
   \g@addto@macro\@authorstringc{ {\it (#1)}}}
\def\@email{}
\def\dedication#1{\gdef\@dedication{#1}}
\def\@dedication{}
\def\mybaselinestretch#1{
  \gdef\@mybaselinestretch{#1}
  \renewcommand{\baselinestretch}{\@mybaselinestretch}}
\def\myparskip#1{
  \gdef\@myparskip{#1}
  \setlength{\parskip}{\@myparskip}}
\newlength{\@listleftmargin}
\def\setenumstandard{
  \setlist{leftmargin=\@listleftmargin,itemsep=0pt,topsep=0pt,partopsep=0pt,parsep=\@myparskip}
  \setlist[enumerate]{align=left,labelsep=*,leftmargin=\@listleftmargin,itemsep=0pt,topsep=0pt,partopsep=0pt,parsep=\@myparskip}
}
\def\denseversion{
  \setcounter{denseversion}{1}
  \newgeometry{left=3cm,right=3cm,top=3cm}
  \mybaselinestretch{1.1}
  \myparskip{0.8ex}
  \normalfont
  \def\possiblelinebreak{}
  \fancyfoot[C]{\itshape{--$\,\,$\thepage$\,\,$--}}}
\def\possiblelinebreak{\\}
\renewcommand{\emph}[1]{\def\reserved@a{it}\ifx\f@shape\reserved@a\ul{#1}\else\textit{#1}\fi}
\def\setcrefnames{}
\newcommand{\mytableofcontents}{
   \ifnum\value{denseversion}=0
     \tableofcontents
     \setcrefnames 
   \else
     \renewcommand{\baselinestretch}{1.1}
     \setlength{\parskip}{0ex}
     \normalfont
     \begingroup
     \def\addvspace##1{\vskip0.4em}
     \tableofcontents
     \setcrefnames 
     \endgroup
     \renewcommand{\baselinestretch}{\@mybaselinestretch}
     \setlength{\parskip}{\@myparskip}
     \normalfont
   \fi}
\newlength{\zeilenlaenge}
\def\putindent#1{
  \settowidth{\zeilenlaenge}{#1}
  \ifnum\zeilenlaenge>\textwidth
    #1
  \else
    \noindent #1
  \fi
}
\def\pdfdaten{
  \hypersetup{
    pdftitle = {\@title},
    pdfauthor = {\@author},
    pdfkeywords = {\@keywords},    
    bookmarksopen = true,
    bookmarksopenlevel = 1
  }}  
\def\showkeywords{\begin{flushleft}\footnotesize\textbf{Keywords}: \@keywords\end{flushleft}}
\def\showmsc{\begin{flushleft}\footnotesize\textbf{MSC 2010}: \@msc\end{flushleft}}
\def\mytitle{}
\def\zmptitle{
  \begin{tabular}{cc}
    \begin{minipage}[c]{0.4\textwidth}
      \begin{flushleft}
        \includegraphics[width=110pt]{../../tex/zmp}
      \end{flushleft}  
    \end{minipage}&
    \begin{minipage}[c]{0.55\textwidth}
      \begin{flushright}
      {\small\sf\@preprint}
      \end{flushright}
    \end{minipage}
  \end{tabular}
  \vskip 2cm}
\def\maketitle{
  \pdfdaten
  \noindent
  \mytitle
  \begin{center}
    \LARGE\@title\\
    \if!\@subtitle!\else\smallskip\LARGE\@subtitle\\\fi
    \bigskip
    \if!\@author!\else\bigskip\large\@author\\\fi
    \ifnum\value{denseversion}=0
      \if!\@adressstringa!\else\bigskip\normalsize\@adressstringa\\\fi
      \if!\@email!\else\ifnum\value{authorcounter}=1\bigskip\normalsize\textit{\@email}\\\else\fi\fi
    \else
    \fi
    \if!\@dedication!\else\bigskip\normalsize{\@dedication}\\\fi
  \end{center}
  \ifnum\value{denseversion}=0\vskip 1.5cm\else\vskip0.5cm\fi}
\def\kobib#1{
  \begin{raggedright}
  \ifnum\value{denseversion}=0\else\small\fi
  \Oldbibliography{#1/kobib}
  \bibliographystyle{#1/kobib}
  \end{raggedright}
  \ifnum\value{denseversion}=0\else
      \noindent
      \if!\@authorstringc!\else
        \ifnum\authortagsused=0\ifnum\value{authorcounter}>1\normalsize\@authorstringc\\[\medskipamount]\else\fi\else\normalsize\@authorstringc\\[\medskipamount]\fi
      \fi
      \if!\@adressstringb!\else\normalsize\@adressstringb\\{}\fi
      \ifnum\authortagsused=0
        \ifnum\value{authorcounter}=1
          \if!\@email!\else\linebreak\normalsize\textit{\@email}\\{}\fi
        \else
        \fi
      \else
      \fi
  \fi}
\let\Oldbibliography\bibliography
\def\bibliography#1{
  \begin{raggedright}
  \ifnum\value{denseversion}=0\else\small\fi
  \Oldbibliography{#1}
  \end{raggedright}
  \ifnum\value{denseversion}=0\else
      \medskip
      \noindent
      \if!\@authorstringc!\else
        \ifnum\authortagsused=0\ifnum\value{authorcounter}>1\normalsize\@authorstringc\\[\medskipamount]\else\fi\else\normalsize\@authorstringc\\[\medskipamount]\fi
      \fi
      \if!\@adressstringb!\else\normalsize\@adressstringb\\{}\fi
      \ifnum\authortagsused=0
        \ifnum\value{authorcounter}=1
          \if!\@email!\else\linebreak\normalsize\textit{\@email}\\{}\fi
        \else
        \fi
      \else
      \fi
  \fi
}
\newenvironment{commentfigure}{}
\newenvironment{sidewayscommentfigure}{\begin{minipage}}{\end{minipage}}
\newenvironment{displaycomment}{\begin{list}{}{\rightmargin=1cm\leftmargin=1cm}\item\sf\begin{small}}{\end{small}\end{list}}
\def\tocmark#1{}
\def\draftstamp#1{
  \def\tocmark##1{
    \ifnum\c@secnumdepth=0\section{##1}\fi
    \ifnum\c@secnumdepth=1\subsection{##1}\fi
    \ifnum\c@secnumdepth=2\subsubsection{##1}\fi
    \ifnum\c@secnumdepth=3\subsubsection{##1}\fi
  }
  \ifnum\value{comments}=0
    \gdef\@draft{DRAFT - Edited on \today\ by #1 - Comments are not displayed}
  \else
    \gdef\@draft{DRAFT - Edited on \today\ by #1 - Comments are displayed}
  \fi
  \fancyhead[C]{\footnotesize\tt\textcolor{red}{\@draft}}}
\def\skript{
  \renewenvironment{displaycomment}{}{}
  \ifnum\value{comments}=0
    \renewenvironment{example*}{\comment}{\endcomment}
    \renewenvironment{remark*}{\comment}{\endcomment}
  \else\fi
  \parindent=0mm	
}
\newcounter{sectioning}
\def\tsection#1{\ifnum\value{sectioning}=0\section{#1}\fi}
\def\lsection#1{
  \ifnum\value{sectioning}=1
    \clearpage
    \def\thesection{\lektionname~\arabic{section}:}
    \section{#1}
    \def\thesection{\arabic{section}}
  \fi}
\def\tsubsection#1{\ifnum\value{sectioning}=0\subsection{#1}\fi}
\def\lsubsection#1{\ifnum\value{sectioning}=1\subsection{#1}\fi}
\def\tsubsubsection#1{\ifnum\value{sectioning}=0\subsubsection{#1}\fi}
\def\lsubsubsection#1{\ifnum\value{sectioning}=1\subsubsection{#1}\fi}
\def\Z {\mathbb{Z}}
\def\R {\mathbb{R}}
\def\T {\mathbb{T}}
\def\im{\mathrm{i}}
\def\id{\mathrm{id}}
\def\hc#1{\mathrm{h}_{#1}}
\def\h {\mathrm{H}}
\def\subset{\subseteq}
\def\sep{\;|\;}
\renewcommand{\varepsilon}{\epsilon}
\renewenvironment{proof}[1][\nameProof]
  {\par\pushQED{\qed}%
   \normalfont \topsep6\p@\@plus6\p@\relax
   \trivlist
   \item[\hskip\labelsep
         \itshape
         #1\@addpunct{.}]
  \leavevmode}
  {\popQED\endtrivlist\@endpefalse}
\def\notebox#1#2{\begin{minipage}[b]{#1}\sloppy\renewcommand{\baselinestretch}{0.8}\footnotesize \begin{center}#2\end{center}\end{minipage}}
\def\mquad{\hspace{-2em}}
\newcommand{\arr}[1][r]{\ar@<0.7ex>[#1]\ar@<-0.7ex>[#1]}
\newcommand{\arrr}[1][r]{\ar@<1.4ex>[#1]\ar[#1]\ar@<-1.4ex>[#1]}
\newcommand{\arrrr}[1][r]{\ar@<2.1ex>[#1]\ar@<-2.1ex>[#1]\ar@<0.7ex>[#1]\ar@<-0.7ex>[#1]}
\def\stackref#1#2{\stackrel{\text{\ref{#1}}}{#2}}
\def\eqref#1{\stackref{#1}{=}}
\newlength{\myeqt} 
\newlength{\myeqs} 
\newlength{\myeqm} 
\newlength{\myeqn} 
\newcommand\eqtext[2][\myeqn]{\symtext[#1]{#2}{=}}
\newcommand\symtext[3][\myeqn]{
  \settowidth{\myeqt}{#2}
  \settowidth{\myeqs}{$#3$}
  \addtolength{\myeqs}{\the\myeqm}
  \ifdim\myeqt>\myeqs
    \stackrel{\hspace{-#1}\notebox{#1}{\medskip #2 \\ $\downarrow$\smallskip}\hspace{-#1}}{#3}
  \else
    \stackrel{\text{#2}}{#3}
  \fi}
\newcommand\eqcref[2][\myeqn]{\symcref[#1]{#2}{=}}
\newcommand\symcref[3][\myeqn]{\symtext[#1]{\cref{#2}}{#3}}
\def\brackets#1{\IfStrEq{#1}{-}{}{(#1)}}
\def\subindex#1{\IfStrEq{#1}{-}{}{_{#1}}}
\newcommand{\alxydim}[2]{\begin{aligned}\xymatrix#1{#2}\end{aligned}}
\def\bigset#1#2{\left\lbrace\;\begin{minipage}[c]{#1}\begin{center}#2\end{center}\end{minipage}\;\right\rbrace}
\newlength{\myl}
\newcommand\sheaf[1]{\unitlength 0.1mm
  \settowidth{\myl}{$#1$}
  \addtolength{\myl}{-0.8mm}
  \begin{picture}(0,0)(0,0)
  \put(2,0){\text{\underline{\hspace{\myl}}}}
  \end{picture}#1\hspace{-0.15mm}}
\def\ddt#1#2#3{\left.\frac{\mathrm{d}^{\IfStrEq{#1}{1}{}{#1}}}{\mathrm{d}#2}\IfStrEq{#2}{#3}{\right.}{\right|_{#3}}}
\def\AUT{\mathrm{AUT}}
\newcommand{\ueins}{{\mathrm{U}}(1)}
\newcommand{\og}[1]{{\mathrm{O}}\brackets{#1}}
\def\fun{\mathcal{F}un}
\def\ev{\mathrm{ev}}
\def\pr{{\mathrm{pr}}}
\def\B{\mathcal{B}\hspace{-0.03em}}
\newlength{\widthtmp}
\def\length#1{\settowidth{\widthtmp}{#1}\the\widthtmp}
\def\quot#1{``#1''}
\def\quand{\quad\text{ and }\quad}
\def\quomma{\quad\text{, }\quad}
\def\quith{\quad\text{ with }\quad}
\def\nameProof{Proof}
\title{Categorical symmetries of T-duality}
\author{Konrad Waldorf}
\email{konrad.waldorf@uni-greifswald.de}
\newcommand{\TD}[1][]{\mathbb{TD}\ifx!#1!\else_{#1}\fi}
\newcommand{\AU}[1][]{\mathbb{A}\ifx!#1!\else_{#1}\fi}
\newcommand{\AUpos}[1][]{\mathbb{A}\ifx!#1!\else_{#1}\fi}
\newcommand{\TDpol}[1][]{\mathbb{TD}^{pol}\ifx!#1!\else_{#1}\fi}
\newcommand{\TB}[1][]{\mathbb{TB}\ifx!#1!\else_{#1}\fi}
\newcommand{\TBF}[1][]{\mathbb{TB}\ifx!#1!\else_{#1}\fi^{F1}}
\newcommand{\TBfib}[1][]{\mathbb{TB}\ifx!#1!\else_{#1}\fi^{fib}}
\newcommand{\TBFF}[1][]{\mathbb{TB}\ifx!#1!\else_{#1}\fi^{F2}}
\newcommand{\TBFFR}[1][]{\mathbb{TB}\ifx!#1!\else_{#1}\fi^{F2\text{-}\R}}
\newcommand{\TF}[1][]{\mathbb{TF}^{\pm}\ifx!#1!\else_{#1}\fi}
\newcommand{\TFpos}[1][]{\mathbb{TF}\ifx!#1!\else_{#1}\fi}
\newcommand{\TFpol}[1][]{\mathbb{TF}^{geo}\ifx!#1!\else_{#1}\fi}
\newcommand{\TFgeo}[1][]{\mathbb{TD}^{\frac{1}{2}\text{-}geo}\ifx!#1!\else_{#1}\fi}
\def\CI{\mathcal{C}r\mathcal{I}n}
\def\tcor{\mathrm{T}\text{-}\mathcal{C}orr}
\def\bra#1#2{\langle#1|#2|}
\def\braket#1#2#3{\bra{#1}{#2}#3\rangle}
\def\lbraket#1#2#3{\bra{#1}{#2}#3\rangle_{low}}
\def\B#1{\mathcal{B}(#1)}
\def\X#1#2{\mathcal{X}(#1,#2)}
\def\CMI{\mathcal{C}r\mathcal{M}od^{str}}
\def\CM{\mathcal{C}r\mathcal{M}od}
\def\sL2Gs{2\mathcal{G}r\!p^{str}}
\def\L2Gs{2\mathcal{G}r\!p}
\def\ssL2gh{\mathcal{H}om^{se-str}}
\keywords{}
\begin{document}

 \maketitle 

\begin{abstract}
Topological T-duality correspondences are higher categorical objects that can be classified by a strict Lie 2-group. In this article we compute the categorical automorphism group of this 2-group; hence, the categorical symmetries of topological T-duality. We prove that the categorical automorphism group is a non-central categorical extension of the integral split pseudo-orthogonal group. We show  that it splits over several  subgroups, and that its k-invariant is 2-torsion.    
\showmsc
\end{abstract}

\setcounter{tocdepth}{2}

\mytableofcontents

\setsecnumdepth{1}

\section{Introduction}

\label{sec:intro}

\setsecnumdepth{0}

Topological T-duality is a mathematical toy model for certain dualities arising in physics, most prominently in string theory compactifications \cite{Buscher1987,Giveon1994}, but also in topological insulators \cite{Mathai2015}.
It is centered around so-called \emph{T-duality correspondences}, diagrams
\begin{equation*}
\alxydim{@C=0.6em@R=1.5em}{& E \times_X \hat E \ar[dl] \ar[dr] \\ E \ar[dr] && E \ar[dl] \\ & X}
\end{equation*}
where $E$ and $\hat E$ are principal bundles for a torus group $\T^{n}=\ueins \times ... \times \ueins$, equipped with integral cohomology classes $\xi\in \h^3(E,\Z)$ and $\hat\xi\in \h^3(\hat E,\Z)$, subject to a certain condition formulated on the the correspondence space $E \times_X \hat E$. Dashing forward, physicists are then  interested in so-called \emph{T-folds}, structures that locally look like T-duality correspondences, but globally are glued together in a way that is  mathematically not yet fully understood. Exploring this gluing process is the motivation for the present paper, in which we thus study the \emph{automorphisms} of T-duality correspondences.

In their seminal work \cite{bunke2005a,bunke2006a} about topological T-duality, Bunke, Rumpf, and Schick constructed
a classifying space $B_n$ for  T-duality correspondences  (called T-duality triples there), i.e., they proved a bijection between homotopy classes of maps $X \to B_n$ and equivalences classes of topological T-duality correspondences over $X$. Matching physicists expectation, they also obtained an action by homotopy equivalences of the group  $\og{n,n,\Z}$ of integral orthogonal matrices w.r.t. the split-signature metric on $\R^{2n}$ on the classifying space $B_n$ \cite[Prop. 2.18]{bunke2006a}. While this was very successful, e.g. for studying  T-dualizability,  ongoing work requires to refine the work of Bunke et al. and to pass from  the level of homotopy classes and homotopy equivalences to a  \quot{homotopy-coherent} picture. For example, in the original purpose of T-duality, i.e., modelling dualities between string backgrounds,  total spaces $E$ and $\hat E$ are equipped with metrics, and  the cohomology classes $\xi$ and $\hat\xi$ actually live in \emph{differential} cohomology -- both structures are not homotopy invariants. Another reason, related to the motivation of the present article,  is the objective to glue T-duality correspondences -- it requires to understand them as objects in a (higher) category, and to study the automorphisms of these objects.

In my joint work with Nikolaus \cite{Nikolause} we obtained such refinement of the classifying space $B_n$: we proved that  $B_n$ is the geometric realization of  a Lie 2-groupoid; more precisely, it is the geometric realization of the delooping of a finite-dimensional strict Lie 2-group $\TD[n]$, i.e., $B_n \cong |B\TD[n]|$, see \cite[Lem. 3.8]{Nikolause}. Our result exhibits topological T-duality correspondences as objects of the 2-stack represented by $B\mathbb{TD}_n$,  
\begin{equation}
\label{eq:eq}
\sheaf{B\TD}_n^{+} \cong \tcor_n\text{,}
\end{equation} 
correctly reflecting their full bicategorical structure. 

The T-duality 2-group $\TD[n]$, which encodes via \cref{eq:eq} \textit{all} information about topological T-duality, is itself a very simple object. Represented as a crossed module, it consists of the Lie groups $G:= \R^{2n}$ and $H:= \Z^{2n} \times \ueins$, of the Lie group homomorphism
\begin{equation*}
t:\Z^{2n} \times \ueins \to \R^{2n} : (m,s) \mapsto m\text{,}
\end{equation*}
and of the following action $\alpha: G \times H \to H$ of $\R^{2n}$ on $\Z^{2n} \times \ueins$:
\begin{equation*}
\alpha(a,(m,s)) := (m,s-[a,m])\text{.} 
\end{equation*} 
Here, we write $\ueins = \R/\Z$ additively, and $[-,-]:\R^{2n} \times \R^{2n} \to \R$ is the bilinear form with matrix 
\begin{equation*}
J:=\begin{pmatrix}
0 & 0 \\
E_n & 0 \\
\end{pmatrix}\text{,}
\end{equation*}
where $E_n\in \R^{n \times n}$ is the unit matrix of size $n$. The Lie 2-group $\TD[n]$ is an example of Ganter's categorical tori \cite{Ganter2014} and as such a central Lie 2-group extension
\begin{equation*}
1\to B\ueins \to \TD[n] \to \T^{2n}_{dis} \to 1\text{,}
\end{equation*}
where $B\ueins$ denotes the Lie 2-group on a single object  with automorphism group $\ueins$, and $(..)_{dis}$ is notation for   considering a Lie group  as a Lie 2-group with only identity morphisms. Under Schommer-Pries' classification of such extensions  \cite{pries2}, it corresponds to the class
\begin{equation*}
\sum_{i=1}^{n} \mathrm{c}_i \cup \mathrm{c}_{i+n} \in \h^4(B\T^{2n},\Z)\text{,}
\end{equation*}
where $\mathrm{c}_i\in \h^2(B\T^{2n},\Z)$ is the pullback of the first Chern class along $\pr_i: B\T^{2n} \to B\ueins$  \cite[Prop. 3.5]{Nikolause}.

 Bunke-Rumpf-Schick  also described another  operation on T-duality correspondences  that depends on an anti-symmetric matrix $B\in \mathfrak{so}(n,\Z)$ \cite[Thm. 2.24]{bunke2006a}. This operation fixes the left leg of a topological T-duality correspondence (up to isomorphism) and permutes all possible right legs that can be  topologically T-dual  to the fixed left leg.
In \cite[§4.1]{Nikolause} Nikolaus and I improved this operation to a homotopy-coherent action of the additive group $\mathfrak{so}(n,\Z)$ on the Lie 2-group $\TD[n]$ by  Lie 2-group homomorphisms. We used this to establish a higher geometric approach to \quot{missing/non-geometric T-duals}, alternative to their famous treatment via non-commutative geometry by Mathai-Rosenberg \cite{Mathai2005a,Mathai2006a}.

Above, by \quot{homotopy-coherent action} of a group $G$ on the Lie 2-group $\TD[n]$ we mean that attached to each group element $g\in G$ is a Lie 2-group homomorphism $F_g: \TD[n] \to \TD[n]$, attached to each pair $g_1,g_2\in G$ is a 2-isomorphism $F_{g_2} \circ F_{g_1} \cong F_{g_2g_1}$, and that these 2-isomorphisms satisfy a coherence condition for any triple of group elements. Equivalently, a homotopy-coherent action can be described as a 2-group homomorphism $F:G_{dis} \to \AUT(\TD[n])$ to the automorphism 2-group of $\TD[n]$.
We remark that any such 2-group homomorphism induces, on the level of isomorphism classes, an ordinary group homomorphism $\pi_0 F:G \to \pi_0\AUT(\TD[n])$, which, under geometric realization, gives precisely an action of $G$   on the classifying space $B_n$ by homotopy equivalences. Let us  also clarify at this point that we always consider the most general  form of automorphisms of Lie 2-groups, which are  -- depending on the context -- known   as weak equivalences, butterflies, Hilsum-Skandalis maps, or anafunctors.

The present article emerged from the attempt to lift, in a similar way, Bunke-Rumpf-Schick's action of $\mathrm{O}(n,n,\Z)$ on $B_n$ by homotopy equivalences to a homotopy-coherent action on  $\TD[n]$. The failure of all our attempts to do so imposes the task to study the automorphism 2-group $\AUT(\TD[n])$ and its relation to the group $\mathrm{O}(n,n,\Z)$ in more generality. The main result of this article is a fairly complete and satisfying solution, see \cref{th:main} below.

One of the first results  we obtain from studying $\AUT(\TD[n])$ is that  not the group $\mathrm{O}(n,n,\Z)$ is relevant but rather a larger group that contains $\mathrm{O}(n,n,\Z)$ as a subgroup of index two. This is the  \textit{pseudo}-orthogonal group $\mathrm{O}^{\pm}(n,n,\Z)$ consisting of elements of $\mathrm{GL}(2n,\Z)$ that preserve the split-signature metric  \textit{up to a sign}. The group $\mathrm{O}^{\pm}(n,n,\Z)$ already appeared in \cite{Mathai2006} in the context of T-duality; here we manifest its relevance by obtaining it as the result of a computation.   

Also of importance are several interesting subgroups of $\mathrm{O}^{\pm}(n,n,\Z)$, which have been described already in the early literature about T-duality, e.g. \cite{Giveon1994}. Most importantly, the group $\mathfrak{so}(n,\Z)$ that appeared above is  a subgroup of $\mathrm{O}(n,n,\Z)$, via the embedding 
\begin{equation*}
B \mapsto \begin{pmatrix}
E_n & 0 \\
B & E_n \\
\end{pmatrix}\text{.}
\end{equation*}
Further, there is a  
$\mathrm{GL}(n,\Z)$-subgroup of \quot{coordinate changes}, a subgroup $V \cong (\Z/2\Z)^{n}$, whose $i$-th factor corresponds to  the  matrix that permutes the $i$-th and the $(n+i)$-th entry of vectors in $\Z^{2n}$, and a Klein 4-subgroup $K=\Z/2\Z \times \Z/2\Z$.
The following is the main result of this article.

\begin{theorem}
\label{th:main}
The automorphism 2-group of the T-duality 2-group is a non-central 2-group extension
\begin{equation*}
1\to B\Z^{2n} \to \AUT(\TD[n]) \to \mathrm{O}^{\pm}(n,n,\Z)_{dis}\to 1\text{.}
\end{equation*} 
Moreover, it is 2-torsion in the group of equivalence classes of such extensions. Finally, the extension splits canonically in a coherent multiplicative way if either $n=1$ or when restricted to any of the subgroups $\mathfrak{so}(n,\Z)$, $\mathrm{GL}(n,\Z)$, $V$, or $K$ of $\mathrm{O}^{\pm}(n,n,\Z)$.
\end{theorem}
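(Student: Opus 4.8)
The plan is to read off the entire statement from the Postnikov data of $\TD[n]$, using a concrete weak-morphism (butterfly) model to pin down which automorphisms are realizable and to extract the relevant cocycles. Since $\TD[n]$ is the central extension $1 \to B\ueins \to \TD[n] \to \T^{2n}_{dis} \to 1$ classified by the bilinear class $\kappa = \sum_i (B\pr_i)^*\mathrm{c}_1 \cup (B\pr_{i+n})^*\mathrm{c}_1 \in \h^4(B\T^{2n},\Z)$, any weak automorphism is determined up to $2$-isomorphism by the pair $(A,\epsilon)$ it induces on $\pi_0 = \T^{2n}$ and $\pi_1 = \ueins$, i.e. by $A \in \mathrm{GL}(2n,\Z)$ and $\epsilon \in \aut(\ueins) = \{\pm 1\}$. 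First I would show that $(A,\epsilon)$ is realizable precisely when it fixes $\kappa$, which --- as $\kappa$ is the symmetric form $I = J + J^{tr}$ up to scale --- is the pseudo-orthogonality condition $A^{tr} I A = \epsilon I$. Its solutions are exactly $\mathrm{O}^{\pm}(n,n,\Z)$ (with $\epsilon$ the sign by which $A$ rescales $I$), which identifies $\pi_0\AUT(\TD[n]) \cong \mathrm{O}^{\pm}(n,n,\Z)$ and produces the quotient map.

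For the kernel I would compute $\pi_1\AUT(\TD[n])$ as the group of monoidal natural automorphisms of the identity; these are classified by continuous homomorphisms $\T^{2n} \to \ueins$, i.e. by $\mathrm{Hom}(\T^{2n},\ueins) \cong \Z^{2n}$, giving the kernel $B\Z^{2n}$. The conjugation action of $(A,\epsilon)$ sends $\chi \mapsto \epsilon\,(\chi\circ A^{-1})$, the natural contragredient action twisted by the sign, and this is nontrivial --- hence the extension is non-central. The extension is then classified by a class $\beta \in \h^3(\mathrm{O}^{\pm}(n,n,\Z);\Z^{2n})$ for this module structure, and the two remaining assertions are statements about $\beta$.

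To get at $\beta$ concretely I would fix, for each $(A,\epsilon)$, a representative butterfly $F_{(A,\epsilon)}$ and compute the canonical coherence $2$-isomorphisms $F_{(A_1,\epsilon_1)} \circ F_{(A_2,\epsilon_2)} \Rightarrow F_{(A_1A_2,\epsilon_1\epsilon_2)}$; their failure to associate strictly is a $\Z^{2n}$-valued $3$-cocycle representing $\beta$. The hard part will be to prove $2\beta = 0$. I expect the only obstruction to splitting the cocycle to be the antisymmetric ambiguity between $J$ and $J^{tr}$ in the off-diagonal block of the chosen lifts --- a \quot{theta-like} half-integral correction that is not integral on the nose but whose double is --- so that I can exhibit a $1$-cochain $\gamma$ with $2\beta = d\gamma$.

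Finally I would realize the splittings as honest coherent $2$-group homomorphisms $s\colon G_{dis} \to \AUT(\TD[n])$. Over $\mathfrak{so}(n,\Z)$ this is the strictly multiplicative action of \cite{Nikolause}, which is already such a splitting. Over $\mathrm{GL}(n,\Z)$ and over $V \cong (\Z/2)^n$ I would write down linear automorphisms of the crossed module $(\Z^{2n}\times\ueins \to \R^{2n})$ covering $\mathrm{diag}(A,(A^{tr})^{-1})$ and the coordinate swaps, and check that they compose strictly, so that $\beta$ restricts to zero there. For $n=1$ I would instead verify $\beta = 0$ directly, by lifting a generating set of the finite group $\mathrm{O}^{\pm}(1,1,\Z)$ to strict (or coherent) automorphisms and checking its defining relations. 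Throughout, the $2$-torsion computation of the third step is the genuine obstacle, since it requires tracking the coherence data of the weak automorphisms with care.
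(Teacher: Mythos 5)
Your plan follows the same skeleton as the paper -- compute $\pi_1\AUT(\TD[n])\cong\mathrm{Hom}(\T^{2n},\ueins)\cong\Z^{2n}$, compute $\pi_0$ and the twisted action, extract the k-invariant from the associativity failure of coherence isomorphisms over a chosen section, kill it by an explicit cochain after doubling, and split over the listed subgroups by explicit lifts -- but your route to $\pi_0\AUT(\TD[n])\cong\mathrm{O}^{\pm}(n,n,\Z)$ is genuinely different. You read off both the determinacy of an automorphism by $(A,\epsilon)$ and the realizability condition $A^{tr}IA=\epsilon I$ from the characteristic class $\kappa\in\h^4(B\T^{2n},\Z)$ of the extension $B\ueins\to\TD[n]\to\T^{2n}_{dis}$; the paper instead first proves (\cref{th:auto}) that every weak automorphism is represented by a \emph{crossed intertwiner} $(\phi,f,\eta)$ with $\eta$ a genuine bilinear-type map $\R^{2n}\times\R^{2n}\to\ueins$, and then derives pseudo-orthogonality by an elementary skew-symmetry argument inside that model (\cref{prop:modelCI}), while injectivity of $\pi_0\AUT\to\mathrm{O}^{\pm}$ is proved via the vanishing of $\h^2_{sg}(\T^{2n},\ueins)\cong\h^3(B\T^{2n},\Z)$ -- the same topological input your determinacy claim secretly requires, so you should not treat that claim as free. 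What the paper's strictification buys is that the "representative butterflies" you postpone become completely explicit: the section $S(A)=(A,\eta_A)$ with $\eta_A$ built from the lower-triangular part of $\mathrm{iso}(A)J-A^{tr}JA$, and the coherence morphisms $\beta_{A,B}$ given by the quadratic map \cref{eq:defbetaH} attached to a symmetric integer matrix $H_{A,B}$. Your guess for the source of $2$-torsion is essentially right -- it is exactly the half-integral quadratic/diagonal correction $\frac12x^{tr}Hx-\frac12H^{diag}\cdot x$ whose double is integral, yielding $2m=\delta\gamma$ for the explicit cochain of \cref{prop:torsion} -- though note that $\gamma$ must be a $2$-cochain, not a $1$-cochain as you wrote. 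The only substantive caution is that without first passing to a strict small model (or an equivalent rigidification of the butterfly bicategory), the cocycle extraction and the $2$-torsion computation would be considerably harder to carry out than your sketch suggests; the paper's \cref{lem:class} is precisely the device that makes this step mechanical.
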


\noindent
We prove \cref{th:main} in the following way:
\begin{enumerate}[(a)]

\item 
In \cref{sec:lie2grps,sec:automorphisms} we set up a small model $\AUT_{CI}(\Gamma)$ for automorphisms of general strict Lie 2-groups $\Gamma$ (\quot{crossed intertwiners}) and prove in \cref{th:auto} that this small model is equivalent to the full automorphism 2-group $\AUT(\Gamma)$ for a certain class of strict Lie 2-groups, to which the T-duality 2-group $\TD[n]$ belongs.

\item
In \cref{sec:auttdual} we construct explicitly a strict Lie 2-group $\AU[n]$ that is suitable for calculations, and   show in \cref{prop:modelCI} that  it is equivalent  to our small model for the automorphisms of $\TD[n]$, i.e., $\AU[n]\cong\AUT_{CI}(\TD[n])$.

\end{enumerate}
All properties of $\AUT(\TD[n])$ claimed in \cref{th:main} are then
proved performing calculations with $\AU[n]$  in \cref{sec:classification}. These are, to the authors regret, \quot{preposterous  calculations with matrices}.  In \cref{prop:pi0,prop:pi1} we compute the homotopy groups of  $\AU[n]$, and conclude in \cref{th:extension} that  it is a non-central extension as claimed in \cref{th:main}. In \cref{sec:splitting:multiplicativity} we compute the k-invariant of $\AU[n]$, which is  an element  in the group cohomology
\begin{equation*}
\h^3(\mathrm{O}^{\pm}(n,n,\Z),\Z^{2n})
\end{equation*}
that classifies 2-group extensions of $\mathrm{O}^{\pm}(n,n,\Z)$ by $\Z^{2n}$, representing it by an explicit  group cocycle $m: \mathrm{O}^{\pm}(n,n,\Z)^3 \to \Z^{2n}$, see \cref{prop:coex}. We then show the following, completing the proof of \cref{th:main}:
\begin{enumerate}[(a)]

\item 
We have $m=0$ for $n=1$ (\cref{prop:n1}) or over the subgroup $V\cong (\Z/2\Z)^{n}$ (\cref{ex:coherence:I}), or over the subgroups $\mathrm{GL}(n,\Z)$, $\mathfrak{so}(n,\Z)$, or $K$ (\cref{lem:resglso}).

\item
$2m$ is the coboundary of a canonical cochain $\gamma$ (\Cref{prop:torsion}). 
\end{enumerate}

\bigskip

Let us describe the consequences of \cref{th:main} for the various actions and types of actions that motivated this work. First of all, by saying that the extension splits canonically in a coherent multiplicative way we mean that we provide  2-group homomorphisms
\begin{equation*}
\mathrm{O}^{\pm}(1,1,\Z)_{dis} \to \AUT(\TD[1])
\quand
G_{dis} \to  \AUT(\TD[n])
\end{equation*}
for $G \in \{\mathfrak{so}(n,\Z),\mathrm{GL}(n,\Z),V,K\}$, which are  sections against the projection of the 2-group extension in  \cref{th:main}; see \cref{prop:actions}.  Along these 2-group homomorphisms we induce homotopy-coherent actions of the groups $\mathrm{O}^{\pm}(1,1,\Z)_{dis}$ and $G$  on $\TD[n]$.  We show that in case of $\mathfrak{so}(n,\Z)$, this is precisely the action  discovered manually in \cite{Nikolause}, see \cref{prop:inducedaction}. We include at the end of this article  a discussion (\cref{sec:Tautomorphisms}) about the geometric meaning of these group actions on T-duality correspondences. 

\cref{th:main} implies that
\begin{equation*}
\pi_1\AUT(\TD[n])\cong \Z^{2n}
\quand
\pi_0\AUT(\TD[n]) \cong \mathrm{O}^{\pm}(n,n,\Z)\text{.}
\end{equation*} 
The inverse of the second isomorphism, 
\begin{equation*}
\mathrm{O}^{\pm}(n,n,\Z) \to \pi_0\AUT(\TD[n])
\end{equation*}
is described explicitly in \cref{sec:homtopycrossedmodule}; it reproduces the action of Bunke-Rumpf-Schick by homotopy equivalences. The question if this action can be lifted to a homotopy-coherent action on $\TD[n]$ is equivalent to the question whether or not the  extension in \cref{th:main} splits as a whole in a coherent multiplicative way. 
We conjecture that this is not the case, though we do not have a definite answer to this question.
Apart from the fact that we tried and failed to find such splitting, it is a fact that the group cohomology of $\mathrm{GL}(2n,\Z)$  indeed has 2-torsion  (see the discussion at  \cite{Chapdelaine}), though neither $\h^3(\mathrm{GL}(2n,\Z),\Z)$, let alone $\h^3(\mathrm{O}^{\pm}(n,n,\Z),\Z)$ seem to be known or easily computable. Our conjecture contradicts the physicists expectation that the group $\mathrm{O}^{\pm}(n,n,\Z)$ can be used for gluing T-duality correspondences to obtain T-folds. Instead, it suggests that  the full 2-group $\AUT(\TD_n)$ must be used; this will be the subject of further work.

\paragraph{Acknowledgements. } I would like to thank Thomas Nikolaus for our collaboration that initiated this project, and Nora Ganter and Christian Saemann for helpful discussions on several aspects.
I would also like to thank a referee for their comments and suggestions that improved this article considerably. 

\setsecnumdepth{2}

\section{Lie 2-groups and  homomorphisms}

\label{sec:lie2grps}

We recall and describe two perspectives on strict Lie 2-groups: the first is via crossed modules, and the second is via Lie groupoids. Both perspectives have their advantages and disadvantages, and we will later switch back and forth whenever convenient. Crossed modules are often convenient for computations, as they form a kind of minimal way to represent Lie 2-groups. Lie groupoids fit better into the abstract concept of categorified groups, and often provide a more conceptional point of view.   

\subsection{Crossed modules and crossed intertwiners}

\label{sec:strictint}

A \emph{crossed module} of Lie groups is a quadruple $\Gamma=(G,H,t,\alpha)$ consisting of Lie groups $G$ and $H$, of a Lie group homomorphism $t:H \to G$, and of a smooth action $\alpha:G \times H \to H$ of $G$ on $H$ by group homomorphisms, such that $\alpha(t(h),h')=hh'h^{-1}$ and $t(\alpha(g,h))=gt(h)g^{-1}$
hold for all $g\in G$ and $h,h'\in H$. 

Let $\Gamma=(G,H,t,\alpha)$ and $\Gamma'=(G',H',t',\alpha')$ be crossed modules of Lie groups. The easiest form of a homomorphism between crossed modules is a \emph{strict intertwiner}: a pair $(\phi,f)$ consisting of Lie group homomorphisms $\phi:G \to G'$ and $f:H \to H'$ such that
\begin{enumerate}[(a)]

\item 
$\phi(t(h))=t'(f(h))$.

\item
$f(\alpha(g,h))=\alpha'(\phi(g),f(h))$ 

\end{enumerate}
hold for all $g\in G$ and $h\in H$. There is an obvious associative composition of intertwiners, as well as an identity intertwiner. We thus obtain a category $\CMI$.

Suppose $\Gamma=(G,H,t,\alpha)$ is a crossed module. Since $t(H)\subset G$ is normal, we may consider the quotient group $G/t(H)$. We also  consider the Lie group  $U := \mathrm{ker}(t)\subset H$. It has the following well-known properties:
\begin{itemize}

\item 
$U$ is  abelian.

\item
$U$ is central in $H$.

\item
The action of $G$ on $H$ restricts to $U$.

\item
The induced action of $H$ on $U$ via $t: H \to G$ is trivial.

\item
The action of $G$ on $U$ descends to $G/t(H)$.

\end{itemize}
\label{sec:CI}
Using the group $U$ associated to any crossed module, we discuss now a  weaker version of homomorphisms between crossed modules, which appeared already in the context of T-duality, see \cite[§A]{Nikolause}. In the present article, this weaker version turns out to be suitable for modelling the automorphism 2-group of $\TD[n]$. 

\begin{definition}
A \emph{crossed intertwiner} between smooth crossed modules $\Gamma=(G,H,t,\alpha)$ and $\Gamma'=(G',H',t',\alpha')$ is a triple $(\phi,f,\eta)$ consisting of Lie group homomorphisms $\phi: G \to G'$ and $f: H \to H'$, and of a smooth map $\eta: G \times G \to U'$ satisfying for all $h,h'\in H$ and $g,g',g''\in G$ the following axioms:
\begin{enumerate}[({CI}1),leftmargin=*]

\item
\label{CI1}
$\phi(t(h))=t(f(h))$.

\item
\label{CI2}
$\eta(t(h),t(h')) =1$

\item
\label{CI4}
$\eta(g,t(h)g^{-1})\cdot f(\alpha(g,h))=\alpha'(\phi(g),\eta(t(h)g^{-1},g))\cdot \alpha'(\phi(g), f(h))$.

\item
\label{CI5}
$\eta(g,g')\cdot \eta(gg',g'')=\alpha'(\phi(g),\eta(g',g''))\cdot \eta(g,g'g'')\text{.}$
\end{enumerate}
\end{definition}

\noindent
We remark that these axioms imply the following:
\begin{itemize}

\item
$f(u)\in U'$ for all $u\in U$, i.e., $f$ induces a homomorphism $U \to U'$.

\item
$\eta(g,1)=1=\eta(1,g)$.

\item
$\eta(g,g^{-1})=\alpha'(\phi(g),\eta(g^{-1},g))$.

\end{itemize}
Strict intertwiners are precisely the crossed intertwiners with $\eta=1$. 
\label{sec:crossedtrans}

\begin{definition}
Let $(\phi,f,\eta)$ and $(\phi,f',\eta')$ be crossed intertwiners. A \emph{crossed transformation}
\begin{equation*}
\beta:(\phi,f,\eta) \Rightarrow (\phi,f',\eta')
\end{equation*}
is a smooth map $\beta: G \to U$ satisfying
\begin{enumerate}[({CT}1),leftmargin=*]

\item
\label{CT1}
$\beta(t(h))\cdot f(h)=f'(h) $

\item
\label{CT2}
$\beta(g_1)\alpha(\phi(g_1),\beta(g_2))\eta(g_1,g_2) = \eta'(g_1,g_2)  \beta
(g_1g_2)$. 

\end{enumerate}
\end{definition}

The  composition $\beta'\bullet\beta$ of $\beta: (\phi,f,\eta) \Rightarrow (\phi,f',\eta')$ and $\beta': (\phi,f',\eta') \Rightarrow (\phi,f',\eta'')$ is given by the smooth map $(\beta'\bullet\beta)(g) := \beta'(g)\beta(g)$. The identity transformation is $\beta := 1$. Hence, crossed intertwiners and crossed transformation form a groupoid, which we denote by $\CI(\Gamma,\Gamma')$.

Next we define a composition functor
\begin{equation*}
\CI(\Gamma',\Gamma'') \times \CI(\Gamma,\Gamma') \to \CI(\Gamma,\Gamma'')\text{.}
\end{equation*}
On the level of objects, the composition of crossed intertwiners is defined by
\begin{equation*}
(\phi_2,f_2,\eta_2)\circ (\phi_1,f_1,\eta_1):=(\phi_2 \circ \phi_1, f_2 \circ f_1,\eta_2 \circ (\phi_1 \times \phi_1) \cdot f_2 \circ \eta_1 )
\end{equation*}
On the level of morphisms, we define the following \quot{horizontal} composition 
\begin{equation*}
\beta_2 \circ \beta_1 :(\phi_2,f_2,\eta_2) \circ (\phi_1,f_1,\eta_1)  \Rightarrow (\phi_2,f_2',\eta_2') \circ (\phi_1,f_1',\eta_1') 
\end{equation*}
of crossed transformations $\beta_1: (\phi_1,f_1,\eta_1) \Rightarrow (\phi_1,f_1',\eta_1')$ and $\beta_2: (\phi_2,f_2,\eta_2) \Rightarrow (\phi_2,f_2',\eta_2')$, given by
\begin{equation*}
(\beta_2 \circ \beta_1)(g) := \beta_2(\phi_1(g))\cdot f_2(\beta_1(g))\text{.} \end{equation*}
It is easy to check that composition is strictly associative.
The crossed homomorphism  $(\id_G,\id_H,1)$ is strictly neutral with respect to composition.
All together, we obtain a strict bicategory $\CM$  of crossed modules, crossed intertwiners, and crossed transformations. The former category $\CMI$  embeds via a faithful functor $\CMI \to \CM$.

The following is straightforward to check.

\begin{lemma}
A crossed intertwiner $(\phi,f,\eta)$ is invertible if and only if $\phi$ and $f$ are Lie group isomorphisms; its inverse is $(\phi^{-1},f^{-1},\eta^{-1})$, where
\begin{equation*}
\eta^{-1}(g,g'):= f^{-1}( \eta(\phi^{-1}(g),\phi^{-1}(g')))^{-1}\text{.}
\end{equation*}
\end{lemma}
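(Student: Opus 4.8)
The plan is to prove the two implications separately, with essentially all of the work concentrated in the ``if'' direction; note that composition of crossed intertwiners was shown above to be strictly associative and strictly unital, so ``invertible'' means possessing a two-sided inverse with both composites \emph{equal} to the relevant identity crossed intertwiner.

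For the ``only if'' direction, suppose $(\phi,f,\eta)$ admits a two-sided inverse $(\psi,k,\zeta)$. Reading off the first and second components of the composition formula, the equalities $(\psi,k,\zeta)\circ(\phi,f,\eta)=(\id_G,\id_H,1)$ and $(\phi,f,\eta)\circ(\psi,k,\zeta)=(\id_{G'},\id_{H'},1)$ give at once $\psi\circ\phi=\id_G$, $\phi\circ\psi=\id_{G'}$, $k\circ f=\id_H$ and $f\circ k=\id_{H'}$; hence $\phi$ and $f$ are Lie group isomorphisms.

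For the ``if'' direction, assume $\phi$ and $f$ are isomorphisms and set $\bar\phi:=\phi^{-1}$, $\bar f:=f^{-1}$, $\bar\eta:=\eta^{-1}$ as in the statement. First I would record two structural facts. Since $f(U)\subseteq U'$ (noted after the definition) and, conversely, $t'(f(h))=\phi(t(h))$ forces $f^{-1}(U')\subseteq U$, the map $f$ restricts to an isomorphism $U\to U'$; thus $\bar f$ sends $U'$ into $U$ and $\bar\eta\colon G'\times G'\to U$ is correctly typed as the datum of a crossed intertwiner $\Gamma'\to\Gamma$. Second, specializing axiom (CI4) to $h=u\in U=\ker t$ (so $t(u)=1$) and using the relation $\eta(g,g^{-1})=\alpha'(\phi(g),\eta(g^{-1},g))$ together with the abelianness of $U'$ to cancel, one obtains the genuine equivariance $f(\alpha(g,u))=\alpha'(\phi(g),f(u))$ for $u\in U$. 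Applying $\bar f$ after the substitution $g=\bar\phi(a)$, $u=\bar f(u')$ transports this to $\bar f(\alpha'(a,u'))=\alpha(\bar\phi(a),\bar f(u'))$ for all $a\in G'$, $u'\in U'$, the equivariance of $\bar f$ needed below.

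With these in hand I would check that $(\bar\phi,\bar f,\bar\eta)$ satisfies (CI1)--(CI5) and that the two composites are identities. Axiom (CI1), i.e.\ $\bar\phi(t'(h'))=t(\bar f(h'))$, comes from applying $\phi^{-1}$ to (CI1) for $(\phi,f,\eta)$; (CI2) then follows by feeding this into the definition of $\bar\eta$ and invoking (CI2) for $\eta$. Axiom (CI5) is a twisted $2$-cocycle identity for the abelian group $U'$ with $G$ acting through $\alpha'(\phi(-),-)$: since $\bar\phi,\bar f$ are homomorphisms and $\bar f$ is equivariant, substituting $(g,g',g'')=(\bar\phi(a),\bar\phi(b),\bar\phi(c))$ into (CI5) for $\eta$ and applying $\bar f$ transports it verbatim. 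For the composites, the first and second components are visibly $\id$, and the third component telescopes: in each of $(\phi,f,\eta)\circ(\bar\phi,\bar f,\bar\eta)$ and $(\bar\phi,\bar f,\bar\eta)\circ(\phi,f,\eta)$ the two contributing factors are $\eta(\bar\phi(-),\bar\phi(-))$ (possibly after applying $\bar f$) and its inverse, so the product is $1$ by the defining formula for $\bar\eta$. The step I expect to be the main obstacle is verifying (CI4) for $(\bar\phi,\bar f,\bar\eta)$: it is the only axiom mixing $\eta$, the $t$-maps, and both actions, so the bookkeeping of inverting $\phi$ and $f$, inserting the defining formula for $\bar\eta$, and repeatedly invoking the equivariance of $\bar f$ and the relation $\eta(g,g^{-1})=\alpha'(\phi(g),\eta(g^{-1},g))$ is genuinely fiddly, whereas everything else amounts to carrying a cocycle identity across an isomorphism.
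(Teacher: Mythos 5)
Your proposal is correct and follows the same route as the paper, whose (extremely terse) proof consists only of the observation that both composites telescope to the identity crossed intertwiner under the composition formula, with the remaining verifications declared straightforward; your additional care about $f$ restricting to an isomorphism $U\to U'$ and being $\alpha$-equivariant there is sound and is exactly what those omitted verifications require. The one step you leave unexecuted, axiom (CI4) for $(\phi^{-1},f^{-1},\eta^{-1})$, does close with precisely the ingredients you name and is less fiddly than you fear: writing $a=\phi^{-1}(g)$ and $h=f^{-1}(h')$, one has $t'(h')g^{-1}=\phi(t(h)a^{-1})$, and substituting (CI4) for $(\phi,f,\eta)$ at $(a,h)$ into the left-hand side makes the two copies of $f^{-1}\bigl(\eta(a,t(h)a^{-1})\bigr)^{\pm1}$ cancel by centrality of $U$; the surviving term
\begin{equation*}
f^{-1}\bigl(\alpha'(\phi(a),\eta(t(h)a^{-1},a))\bigr)^{-1}\cdot\alpha(a,h)
\end{equation*}
equals the right-hand side by the equivariance of $f^{-1}$ on $U'$, without ever invoking the relation $\eta(g,g^{-1})=\alpha'(\phi(g),\eta(g^{-1},g))$. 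So there is no gap, only an unexpanded computation.
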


It is straightforward to check that the groups $G/t(H)$ and $U$, as well as the action of $G/t(H)$ on $U$ are invariant under invertible crossed intertwiners.

\begin{example}
\label{ex:flip}
In case of the crossed module $\TD_n$ (see \cref{sec:intro} or \cref{def-TDn}) we consider a certain crossed intertwiner
\begin{equation*}
flip: \TD[n] \to \TD[n]\text{.}
\end{equation*}
We will see later in \cref{sec:actions} that it swaps the left and the right leg of  a T-duality correspondence. It is given by $\phi(x \oplus \hat x) :=\hat x \oplus x$ and $f(m \oplus \hat m,t) := (\hat m \oplus m,t)$ and $\eta(a,a') :=(0,[a,a'])$, where the bracket $[-,-]$ was introduced in \cref{sec:intro}. Here, $x,\hat x\in \R^{n}$ and  $x\oplus \hat x \in \R^{2n}$, similarly for $m,\hat m\in \Z^{n}$.
Note that the flip is not strictly involutive: the composition $flip^2=flip \circ flip$ is the crossed intertwiner $(\id,\id,\tilde\eta)$ with
\begin{align*}
\tilde\eta(x\oplus \hat x,x' \oplus \hat x') &=\eta (\phi(x\oplus \hat x), \phi(x' \oplus \hat x')) + f(\eta(x\oplus \hat x,x' \oplus \hat x')) 
\\& =[\hat x \oplus x,\hat x'\oplus x']+[x\oplus \hat x,x' \oplus \hat x']
\\& = x\hat x'+ \hat xx' \text{.}
\end{align*}
We define a crossed transformation $\beta: flip^2 \Rightarrow \id$ by setting
$\beta(x \oplus \hat x):=-x \hat x$ (standard scalar product on $\R^{n}$).
This crossed transformation satisfies a coherence law that will be discussed in  \cref{ex:flipautomorphism}. 
\end{example}

\begin{remark}
Weak equivalences between   crossed modules can be modelled by so-called \emph{butterflies} \cite{Aldrovandi2009}, certain group extensions $H' \to K \to G$. Crossed intertwiners correspond precisely to those butterflies whose extension splits, i.e.,  those that have a section by a Lie group homomorphism. 
\end{remark}

\subsection{Lie 2-groups and semi-strict homomorphisms}

A \emph{strict Lie 2-group} is a Lie groupoid $\Gamma$ whose manifolds of objects $\Gamma_0$ and morphisms $\Gamma_1$ are Lie groups, in such a way that source, target,  $\id: \Gamma_0 \to \Gamma_1$, the composition, and the inversion (w.r.t. composition) are group homomorphisms. A \textit{strict homomorphism} between strict Lie 2-groups is a smooth functor $F: \Gamma \to \Gamma'$ that is a group homomorphism both on the level of objects and on the level of morphisms.

A crossed module $\Gamma=(G,H,t,\alpha)$ determines a strict Lie 2-group, intentionally denoted by the same letter $\Gamma$,
by putting $\Gamma_0 := G$ and $\Gamma_1 := H \ltimes_{\alpha} G$, source $(h,g) \mapsto g$, target $(h,g) \mapsto t(h)g$, and composition $(h_2,g_2)\circ(h_1,g_1) := (h_2h_1,g_1)$. Likewise, an intertwiner $(\phi,f):\Gamma \to \Gamma'$ induces a strict homomorphism $F:\Gamma \to \Gamma'$ defined by $F(g):=\phi(g)$ and $F(h,g) := (f(h),\phi(g))$. 
It is well-known that this establishes an equivalence of categories
\begin{equation*}
\CMI \cong \sL2Gs
\end{equation*}
between smooth crossed modules and intertwiners, and strict Lie 2-groups and strict Lie 2-group homomorphisms.
Under this equivalence, the quotient group $G/t(H)$ is the group $\pi_0\Gamma$ of isomorphism classes of objects of the Lie groupoid $\Gamma$, and  $U=\pi_1\Gamma:=\mathrm{Aut}(1)$, the abelian group of automorphisms of the unit object $1\in \Gamma_0$. This gives a clear interpretation of these groups as the usual homotopy groups of the 1-type $\Gamma$.
Note that $\pi_0\Gamma$ acts on $\pi_1\Gamma$.

A crossed intertwiner $(\phi,f,\eta)$ defines a smooth functor
$F: \Gamma \to \Gamma'$
by putting 
\begin{equation*}
F(g):=\phi(g)
\quand
F(h,g):= (\eta(t(h),g)^{-1}\cdot f(h),\phi(g))\text{.}
\end{equation*}
The functor $F$ is a group homomorphism on the level of objects, but not on the level of morphisms, and thus is not a morphism in $\sL2Gs$. In this context it is  more intuitive to view the group structures on $\Gamma_0$ and $\Gamma_1$ as equipping the Lie groupoid $\Gamma$ with a strict monoidal structure $m: \Gamma \times \Gamma \to \Gamma$. Then, the functor $F:\Gamma \to \Gamma'$ is  monoidal functor; however, it is not \textit{strictly} monoidal: this is expressed by the existence of a smooth natural transformation (\quot{multiplicator})
\begin{equation*}
\alxydim{}{\Gamma \times \Gamma \ar[r]^-{m} \ar[d]_{F \times F} & \Gamma \ar@{=>}[dl]|*+{\tilde\eta} \ar[d]^{F} \\ \Gamma' \times \Gamma' \ar[r]_-{m} & \Gamma'\text{.}}
\end{equation*}
It is defined from the given map $\eta$, and its components are
$\tilde\eta(g,g') := (\eta(g,g'),\phi(gg'))$.
Moreover, $\tilde\eta$ satisfies the usual coherence law for the multiplicators of monoidal functors, which corresponds on the level of components to the equality
\begin{equation}
\label{eq:cohetatilde}
(F(\id_{g_{34}}) \cdot \tilde\eta(g_{23},g_{12})) \circ \tilde \eta(g_{34},g_{23}g_{12})
= ( \tilde\eta(g_{34},g_{23}) \cdot F(\id_{g_{12}})) \circ  \tilde \eta(g_{34}g_{23},g_{12})
\end{equation} 
for $g_{12},g_{23},g_{34} \in G$, which can easily be deduced from \cref{CI5*}.

Associated to a crossed transformation $\beta:(\phi,f,\eta) \Rightarrow (\phi,f',\eta')$  is a smooth natural transformation $\lambda: F \Rightarrow F'$, where $F$ and $F'$ are the smooth functors associated to $(\phi,f,\eta)$ and $(\phi,f',\eta')$, respectively. It is defined by $\lambda(g) := (\beta(g),\phi(g))$. 
Moreover, it is a monoidal natural transformation in the usual sense, i.e., we have an equality
\begin{equation*}
\alxydim{@C=3em@R=3em}{\Gamma \times \Gamma \ar@/_3pc/[d]_{F \times F}="1" \ar[r]^-{m} \ar[d]^{F' \times F'}="2" \ar@{=>}"1";"2"|{\lambda \times \lambda} & \Gamma  \ar@{=>}[dl]|<<<<<<<*+{\tilde\eta'} \ar[d]^{F'}  \\ \Gamma' \times \Gamma' \ar[r]_-{m} & \Gamma'}
=
\alxydim{@C=3em@R=3em}{\Gamma \times \Gamma  \ar[r]^-{m} \ar[d]_{F \times F} & \Gamma \ar@/^3pc/[d]^{F'}="2" \ar@{=>}[dl]|*+{\tilde\eta} \ar[d]_{F}="1" \ar@{=>}"1";"2"|{\lambda} \\ \Gamma' \times \Gamma' \ar[r]_-{m} & \Gamma'}
\end{equation*}
Finally, the vertical composition of crossed transformations corresponds precisely to the composition of monoidal natural transformation.

Let $\ssL2gh(\Gamma,\Gamma')$ denote the category of semi-strict Lie 2-group homomorphisms, i.e. smooth functors $F: \Gamma \to \Gamma'$ that are monoidal by virtue of a smooth multiplicator $\tilde\mu$, and smooth natural transformations $\lambda: F \Rightarrow F'$. Above we have defined a functor
\begin{equation}
\label{eq:crinttosestr}
\CI(\Gamma,\Gamma') \to \ssL2gh(\Gamma,\Gamma')\text{.}
\end{equation}
We remark that this functor is faithful, but in general neither essentially surjective nor full.  

It is straightforward to check that the functor associated to a composition of crossed intertwiners is the composition of the separate functors, i.e.
\begin{equation}
\label{eq:compfunctors}
F_{(\phi_2,f_2,\eta_2)\circ (\phi_1,f_1,\eta_1)}=F_{(\phi_2,f_2,\eta_2)} \circ F_{(\phi_1,f_1,\eta_1)}\text{.}
\end{equation}
Furthermore, the \quot{stacking}
\begin{equation*}
\alxydim{}{\Gamma \times \Gamma \ar[r]^-{m} \ar[d]_{F_1 \times F_1} & \Gamma \ar@{=>}[dl]|{\tilde\eta_1} \ar[d]^{F_1} \\ \Gamma' \times \Gamma' \ar[d]_{F_2 \times F_2} \ar[r]|--{m'} & \Gamma' \ar@{=>}[dl]|{\tilde\eta_2} \ar[d]^{F_2} \\ \Gamma'' \times \Gamma'' \ar[r]_-{m''} & \Gamma''}
\end{equation*}
of the corresponding multiplicators $\tilde\eta_1$ and $\tilde\eta_2$ is precisely the multiplicator of the composition.
This shows that \cref{eq:compfunctors} is an equality of monoidal functors. 

We are now in position to form the bicategory $\L2Gs$ whose objects are strict Lie 2-groups and whose morphism categories are $\ssL2gh(\Gamma,\Gamma')$. We have then constructed a 2-functor
\begin{equation*}
\CM \to \L2Gs\text{.}
\end{equation*}

\setsecnumdepth{1}

\section{The automorphism 2-group of a Lie 2-group}

\label{sec:automorphisms}

Let $\Gamma$ be a strict Lie 2-group. There are various versions of automorphisms one can consider, all forming 2-groups. The most general one is a semi-strict 2-group $\AUT(\Gamma)$ defined in the following way:  
\begin{itemize}

\item 
Objects are\ all weak equivalences of Lie 2-groups, i.e. invertible smooth anafunctors $F: \Gamma \to \Gamma$  together with invertible smooth anafunctor transformations
\begin{equation*}
\alxydim{}{\Gamma \times \Gamma \ar[r]^-{m} \ar[d]_{F \times F} & \Gamma \ar@{=>}[dl]|*+{\mu} \ar[d]^{F} \\ \Gamma \times \Gamma \ar[r]_-{m} & \Gamma\text{;}}
\end{equation*}
satisfying a coherence condition. Multiplication is the composition of anafunctors.

\item
Morphisms are all anafunctor transformations $\lambda: F \Rightarrow F'$ that are compatible with the transformations $\mu$. Composition is the vertical composition of transformations, and multiplication is the horizontal composition of transformations. 

\end{itemize}
This yields a \emph{semi-strict} 2-group since the composition of anafunctors is associative only up to coherent transformations. A reference for smooth anafunctors is \cite[§2.3]{Nikolaus}. They are also known as bibundles, or Hilsum-Skandalis morphisms. In terms of crossed modules, weak equivalences correspond to so-called \textit{butterflies} \cite{Aldrovandi2009}.

A more restrictive, however strict 2-group $\AUT^{fun}(\Gamma)$ is obtained by replacing \quot{anafunctor} by \quot{smooth functor}, and \quot{anafunctor transformation} by \quot{smooth natural transformation}.  In other words, $\AUT^{fun}(\Gamma)\subset \ssL2gh(\Gamma,\Gamma)$ is the subgroupoid consisting of all invertible semi-strict Lie 2-group homomorphisms, and all invertible natural transformations.   
The  fully faithful inclusion  $\fun^{\infty}(\mathcal{X},\mathcal{Y})\subset \mathcal{A}na^{\infty}(\mathcal{X},\mathcal{Y})$ of smooth functors into smooth anafunctors induces a fully faithful monoidal functor $\AUT^{fun}(\Gamma) \to \AUT(\Gamma)$.

The next version relies on the crossed module  that corresponds to $\Gamma$. It is the strict 2-group  $\AUT_{CI}(\Gamma) \subset \CI(\Gamma,\Gamma)$ consisting of all invertible crossed  intertwiners and all crossed transformations. The functor \cref{eq:crinttosestr} restricts to a monoidal functor $\AUT_{CI}(\Gamma) \to \AUT^{fun}(\Gamma)$. The main point of this section is to show that the smallest of these automorphism 2-groups, $\AUT_{CI}(\Gamma)$, still captures the full automorphism group $\AUT(\Gamma)$ is certain cases. 

\begin{theorem}
\label{th:auto}
Let $\Gamma=(G,H,t,\alpha)$ be a crossed module with the following properties:
\begin{enumerate}[({P}1),leftmargin=*]

\item
\label{aut:1} 
$G$ is contractible.

\item
\label{aut:2}
 $Z := \mathrm{im}(t) \subset G$ is a discrete, free abelian subgroup. 
 
\item
\label{aut:3}
$U :=\mathrm{ker}(t)$ is connected.

\end{enumerate}
Then, the functors
\begin{equation*}
\AUT_{CI}(\Gamma) \to \AUT^{fun}(\Gamma) \to \AUT(\Gamma)\text{.}
\end{equation*}
are isomorphisms of 2-groups. 
\end{theorem}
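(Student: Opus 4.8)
The overall strategy is to reduce, in each case, to an essential–surjectivity statement (together with a fullness statement for the first functor), since the faithfulness of $\CI(\Gamma,\Gamma')\to\ssL2gh(\Gamma,\Gamma')$ and the full faithfulness of $\AUT^{fun}(\Gamma)\to\AUT(\Gamma)$ are already in hand. Writing $Z=\mathrm{im}(t)$ and $U=\ker(t)$, I would exploit that (P2) makes $\pi\colon G\to G/Z=\pi_0\Gamma$ a covering of Lie groups with deck group $Z$, that (P1) makes $G$ simply connected and a trivializing base for principal bundles, and that (P3) lets one choose and extend $U=\pi_1\Gamma$-valued correction data smoothly over $G$. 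Thus the plan is: (A) show $\AUT^{fun}(\Gamma)\to\AUT(\Gamma)$ is essentially surjective; (B) show $\AUT_{CI}(\Gamma)\to\AUT^{fun}(\Gamma)$ is full and essentially surjective. Both functors are then equivalences, and being monoidal they are the asserted isomorphisms of 2-groups.

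For (A), a weak self-equivalence of $\Gamma$ is, by \cite{Aldrovandi2009}, an invertible butterfly, equivalently a principal $\Gamma$-bundle $P\to G$ with a commuting $\Gamma$-action; it strictifies to a smooth functor exactly when $P\to G$ admits a global smooth section. I would trivialize $P$ in two stages. Its underlying $\pi_0$-datum is a smooth map $\bar g\colon G\to G/Z$; by (P1) $G$ is simply connected, so by (P2) it lifts along $\pi$ to a smooth $g\colon G\to G$. Rigidifying the anchor by $g$ (using local smooth sections of the principal $U$-bundle $t\colon H\to Z$, where (P3) guarantees a coherent smooth choice) reduces the transition data to an ordinary principal $U$-bundle over $G$, which is trivial because $G$ is contractible. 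The resulting smooth section strictifies the anafunctor, and the multiplicator and its coherence $2$-isomorphism are transported along the trivialization, giving an object of $\AUT^{fun}(\Gamma)$ isomorphic to the given one.

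For essential surjectivity in (B), let $F\colon\Gamma\to\Gamma$ be an invertible semi-strict homomorphism with multiplicator $\tilde\mu$. Since each $\tilde\mu(g,g')$ is a morphism, it shifts its target only within a $Z$-coset, so $\pi\circ F_0=\bar\phi\circ\pi$ for a homomorphism $\bar\phi$ of $G/Z$. Lifting $\bar\phi$ along $\pi$ by (P1) and (P2) yields a genuine Lie group homomorphism $\phi\colon G\to G$ with $\pi\circ\phi=\pi\circ F_0$; as $G$ is connected and $\pi$ is a covering, $\phi$ and $F_0$ are two lifts of the same map, hence differ by a constant deck translation $z_0\in Z$, i.e. $F_0=z_0\,\phi$. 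Picking $h_0$ with $t(h_0)=z_0^{-1}$, the constant natural isomorphism $g\mapsto(h_0,F_0(g))$ replaces $F$ by an isomorphic functor with object map exactly $\phi$. Now $\phi(g)\phi(g')=\phi(gg')$, so each multiplicator component is an automorphism of $\phi(gg')$, giving $\tilde\mu(g,g')=(\eta(g,g'),\phi(gg'))$ with smooth $\eta\colon G\times G\to U$; reading $F$ off on morphisms and setting $f$ to be the $H$-component of $F(-,1)$ produces a homomorphism $f\colon H\to H$. Axioms (CI1), (CI4), (CI5) then follow from functoriality of $F$ and the coherence~(\ref{eq:cohetatilde}) of $\tilde\mu$, and (CI2) drops out by comparing the functoriality relation with (CI5) specialized to $(t(h'),t(h),g)$. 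Invertibility of $F$ forces $\bar\phi$ and the induced map on $U$ to be isomorphisms, so $\phi$ and $f$ are isomorphisms and $(\phi,f,\eta)$ is an invertible crossed intertwiner with $F_{(\phi,f,\eta)}\cong F$.

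For fullness in (B), let $\lambda\colon F_{(\phi,f,\eta)}\Rightarrow F_{(\phi',f',\eta')}$ be a smooth monoidal natural transformation between objects in the image, with components $\lambda(g)=(\beta(g),\phi(g))$ and $t(\beta(g))=\phi'(g)\phi(g)^{-1}\in Z$. Since $Z$ is discrete by (P2) and $G$ is connected by (P1), the map $g\mapsto t(\beta(g))$ is constant, with unit value $1$; hence $\phi'=\phi$ and $\beta\colon G\to U$, and (CT1), (CT2) follow from the naturality and monoidality of $\lambda$, so $\beta$ is a crossed transformation realizing $\lambda$. I expect the main obstacle to lie in the essential surjectivity of (B): manufacturing a genuine Lie group homomorphism $\phi$ on all of $G$ (not merely on $G/Z$) and simultaneously realizing the full coherence of $F$ by smooth crossed-intertwiner data. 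This is precisely where the three hypotheses must interlock — covering-space lifting from (P1) and (P2), with $Z$ free abelian controlling the lattice-level splitting, and connectedness of $U$ from (P3) permitting the smooth choice and extension of the $U$-valued corrections across $G$ — and it is the step I would verify with the greatest care.
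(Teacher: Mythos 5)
Your overall architecture coincides with the paper's: factor through $\AUT^{fun}(\Gamma)$, prove essential surjectivity of $\AUT^{fun}(\Gamma)\to\AUT(\Gamma)$ from contractibility of $G$ (the paper's \cref{lem:aut:step1} does this in one line, observing that the left anchor of an anafunctor is a principal bundle over the contractible base $G$ and hence trivial), and then prove that $\AUT_{CI}(\Gamma)\to\AUT^{fun}(\Gamma)$ is essentially surjective and fully faithful by normalizing a monoidal functor, recorded as a triple $(\phi,f,\eta)$, step by step. Your treatment of fullness and faithfulness, and your normalization of the object map $F_0$ to a genuine homomorphism $\phi$ (constancy of $Z$-valued maps on the connected $G$, covering-space lifting), are sound and agree in substance with \cref{lem:aut:step3} and with the first normalization step of \cref{lem:th:aut:2}.

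There is, however, a genuine gap in your essential-surjectivity argument for $\AUT_{CI}(\Gamma)\to\AUT^{fun}(\Gamma)$: the claim that axiom (CI2), i.e.\ $\eta|_{Z\times Z}=1$, ``drops out by comparing the functoriality relation with (CI5)'' is false, and with it the claim that the $H$-component of $F(-,1)$ is a group homomorphism. What functoriality and the coherence of $\tilde\mu$ actually yield (after your normalization, with $\eta$ already $U$-valued, $U$ central, and $Z=\mathrm{im}(t)$ acting trivially on $U$) is only the twisted relation $f(hh',1)=\eta(t(h),t(h'))^{-1}\,f(h,1)\,f(h',1)$; the restriction $\eta|_{Z\times Z}$ is a genuine $U$-valued $2$-cocycle on $Z$ that need not vanish for the given object, so no amount of algebraic manipulation of (CA2), (CA4), (CA5) will kill it. The paper must therefore modify $(\phi,f,\eta)$ by a further isomorphism (the ``UZ-normalization'' of \cref{lem:th:aut:3}): it observes that $\eta|_{Z\times Z}$ classifies a central extension of the free abelian group $Z$ by $U$, splits it by a map $\beta_Z\colon Z\to U$ (this is where the freeness in (P2) is used), and extends $\beta_Z$ to a smooth map $\beta\colon G\to U$ using that $Z$ is discrete and $U$ is connected (this is where (P3) is used); only the resulting UZ-normalized object defines a crossed intertwiner. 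You correctly identify in your closing paragraph that (P2) and (P3) must interlock exactly at this point, but the argument you actually give replaces that step with an incorrect automatic cancellation, so the proof as written does not go through.
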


\begin{proof}
Split into \cref{lem:aut:step1,lem:aut:step2,lem:aut:step3}. 
\end{proof}

\begin{remark}
Of course, assumptions \cref{aut:1*,aut:2*,aut:3*} are  very restrictive. We recall \cite[Theorem 3.2]{Onishchik1991} that \cref{aut:1*} implies that $G\cong \R^{n}$ as Lie groups. Note that this is the case  for $\TD[n]$. 
\end{remark}

\begin{example}
Let $U$ be a connected abelian Lie group. We consider the crossed module with $G$ the trivial group, $H:= U$, and $t$ and $\alpha$ trivial. It satisfies all conditions of \cref{th:auto}. The corresponding Lie 2-group is $\Gamma = BU$, i.e. it has a single object with automorphism group $U$. It is easy to see that 2-group $\AUT_{CI}(\Gamma)$ is the ordinary group $\mathrm{Aut}(U)$ of Lie group automorphisms of $U$ (with only identity morphisms). Hence, by \cref{th:auto}, this is also the full automorphism group.  

\end{example}

\begin{proposition}
\label{lem:aut:step1}
Under assumption \cref{aut:1*}  the functor 
\begin{equation*}
\AUT^{fun}(\Gamma) \to \AUT(\Gamma)
\end{equation*}
is an equivalence of categories.
\end{proposition}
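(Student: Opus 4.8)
The plan is to verify the two halves of \quot{equivalence of categories} separately: full faithfulness, which is essentially formal and needs no hypothesis, and essential surjectivity, which is the real content and where \cref{aut:1*} enters.

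\textbf{Full faithfulness.} The functor in question is induced by the inclusion $\fun^{\infty}(\Gamma,\Gamma)\incl\af(\Gamma,\Gamma)$, which was recalled to be fully faithful. Consequently every anafunctor transformation between two \emph{smooth functors} is the image of a unique smooth natural transformation. Applying this first to multiplicators shows that the monoidal datum $\mu$ carried by an object of $\AUT(\Gamma)$ whose underlying anafunctor happens to be a genuine functor is automatically a smooth natural transformation; applying it to morphisms shows that an anafunctor transformation compatible with the $\mu$'s is exactly a smooth natural transformation compatible with them. Hence $\AUT^{fun}(\Gamma)\to\AUT(\Gamma)$ is fully faithful, independently of \cref{aut:1*}.

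\textbf{Essential surjectivity.} Here I would show that every weak equivalence $F\colon\Gamma\to\Gamma$ is isomorphic, as an object of $\AUT(\Gamma)$, to one whose underlying anafunctor is a smooth functor. Represent the underlying weak equivalence by a butterfly \cite{Aldrovandi2009}, i.e.\ a Lie group $E$ together with its two legs. The defining property of a \emph{weak} equivalence is that the leg over the domain,
\[
1 \to H \to E \xrightarrow{\;p\;} G \to 1 \text{,}
\]
is a short exact sequence of Lie groups; equivalently $p\colon E\to G$ is a smooth principal $H$-bundle, and it is precisely the surjective submersion (cover) underlying the anafunctor $F$. The crucial point is now that, by \cref{aut:1*}, the base $G$ is contractible, so this principal $H$-bundle is trivial and therefore admits a global smooth section $s\colon G\to E$ of $p$. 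This is the only place where contractibility is used.

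Given such a section, I would reduce the anafunctor to a genuine smooth functor along $s$ in the standard way: the composite of $s$ with the codomain leg $q\colon E\to G$ defines the map on objects, while the butterfly data supply the map on morphisms. The failure of $s$ to be a group homomorphism is measured by a smooth $H$-valued map, and this is exactly the multiplicator turning the resulting functor $F'$ into a semi-strict homomorphism, hence an object of $\AUT^{fun}(\Gamma)$ (it remains invertible, being isomorphic to the weak equivalence $F$). The same section $s$ provides an invertible anafunctor transformation $F\Rightarrow F'$; transporting $\mu$ along it yields the multiplicator of $F'$ and exhibits the isomorphism $(F,\mu)\cong(F',\mu')$ in $\AUT(\Gamma)$. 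The main obstacle is the bookkeeping in reducing the butterfly along $s$ and checking that the transported multiplicator still satisfies the monoidal coherence law of \cref{eq:cohetatilde}; by contrast the genuinely nontrivial input, the existence of the section, is immediate from the triviality of principal bundles over a contractible base.
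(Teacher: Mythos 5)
Your proof is correct and follows essentially the same route as the paper: full faithfulness is inherited from the fully faithful inclusion of smooth functors into anafunctors, and essential surjectivity is reduced to the existence of a global section of a principal $H$-bundle over the contractible base $G$, which is exactly where \cref{aut:1*} enters. The only cosmetic difference is that you present the weak equivalence as a butterfly and section its domain leg $E\to G$, whereas the paper sections the left anchor of the anafunctor directly; these are the same step in two equivalent languages (the paper itself identifies weak equivalences with butterflies).
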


\begin{proof}
It remains to show that it is essentially surjective.
If $F:\Gamma \to \Gamma$ is a smooth anafunctor,  then its left anchor
$\alpha_l:F \to G$ is a principal $\Gamma$-bundle over a  contractible manifold. In particular, it is an ordinary principal bundle (for the Lie group $H$), and hence has a global section. Hence, $F$ is equivalent to a smooth functor.
\end{proof}

It remains to show that the functor $\AUT_{CI}(\Gamma) \to \AUT^{fun}(\Gamma)$ is an equivalence of categories; this is done in the subsequent \cref{lem:aut:step2,lem:aut:step3}. Before we start we give a reformulation of objects and morphisms of  $\AUT^{fun}(\Gamma)$ by spelling out what functors and natural transformations are in terms of the crossed module $(G,H,t,\alpha)$. 

Let $(F,\mu)$ be an object in $\AUT^{fun}(\Gamma)$.  We let $\phi:G \to G$ be the assignment of $F$ on objects,  extract $f:H \times G \to H$ on the level of morphisms (we ignore the $G$-component in the target), and let $\eta:G \times G \to H$  be the component map of the natural transformation $\mu$ (we ignore again the $G$-component).
These three maps have the following properties:
\begin{enumerate}[({CA}1),leftmargin=*]
\itemindent=2em

\item
\label{CA1}
$\phi(t(h)g)=t(f(h,g))\phi(g)$.

\item
\label{CA2}
$f(h'h,g)=f(h',t(h)g)\cdot f(h,g)$

\item 
\label{CA3}
$t(\eta(g,g'))\phi(gg')=\phi(g)\phi(g')$.

\item
\label{CA4}
$\eta(t(h)g,t(h')g') \cdot f(h\alpha(g,h'),gg')=f(h,g)\cdot \alpha(\phi(g),f(h',g'))\cdot \eta(g,g')$

\item
\label{CA5}
$\eta(g,g')\cdot \eta(gg',g'')=\alpha(\phi(g),\eta(g',g''))\cdot \eta(g,g'g'')\text{.}$
\end{enumerate}
Indeed,  \ref{CA1} and \ref{CA2} are satisfied because $F$ is a functor. \cref{CA3*} is the target condition for the natural transformation $\mu$, \cref{CA4*} is the naturality of $\mu$, and \cref{CA5*} is the coherence condition for $\mu$.
In turn, we have the following consequences of these properties:
\begin{enumerate}[({CA}1{a}),leftmargin=*]
\itemindent=2em
\setcounter{enumi}{1}

\item
\label{lem:funautnorm:a}
$f(1,g)=1$.

\item
\label{lem:funautnorm:b}
$t(\eta(1,1))=\phi(1)$.

\end{enumerate}
Indeed, \cref{lem:funautnorm:a*} follows from \ref{CA2} by putting $h'=h=1$. \cref{lem:funautnorm:b*} follows by putting $g=g'=1$ in \ref{CA3}. 

In terms of above reformulation, two triples $(\phi,f,\eta)$ and $(\phi',f',\eta')$ are isomorphic as objects of $\AUT^{fun}(\Gamma)$ if and only if  there exists a smooth map $\beta:G \to H$ such that
\begin{enumerate}[({CA}1),leftmargin=*]
\itemindent=2em
\setcounter{enumi}{5}

\item 
\label{CA6}
$t(\beta(g))\phi(g)=\phi'(g)$. 

\item
\label{CA7}
$\beta(t(h)g)f(h,g)=f'(h,g)  \beta(g)$.

\item
\label{CA8}
$\beta(g_1)\alpha(\phi(g_1),\beta(g_2))\eta(g_1,g_2) = \eta'(g_1,g_2)  \beta
(g_1g_2)$. 

\end{enumerate}
Indeed, if $\beta$ is the component map of a 1-morphism $\lambda:F \Rightarrow F'$, then \cref{CA6*} is the target condition for $\lambda$, \cref{CA7*} is the naturality, and \cref{CA8*} is the coherence condition between $\lambda$, $\mu$, and $\mu'$.

An object $(\phi,f,\eta)$ in $\AUT^{fun}(\Gamma)$ is called \textit{normalized}, if $\phi(1)=1$ and $\eta(1,1)=1$.  We have the following result:

\begin{lemma}
\label{lem:th:aut:2}
\begin{enumerate}[(a)]

\item 
\label{lem:th:aut:2:a}
Every object in $\AUT^{fun}(\Gamma)$ is isomorphic to a normalized one. 

\item
\label{lem:th:aut:2:b}
If $(\phi,f,\eta)$ is normalized, then we have
\begin{enumerate}[({CA-N}1),leftmargin=*]
\itemindent=2em

\item 
\label{lem:funautnorm:c}
$\eta(1,g)=\eta(g,1)=1$.

\item
\label{lem:funautnorm:d}
$f(h,g)=\eta(t(h),g)^{-1} \cdot f(h,1)$.

\end{enumerate}

\end{enumerate}
\end{lemma}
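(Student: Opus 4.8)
The plan is to handle the two parts separately; both reduce to feeding special values into the cocycle-type identities \cref{CA4*} and \cref{CA5*}, together with the normalization consequences \cref{lem:funautnorm:a*} and \cref{lem:funautnorm:b*}.

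For part (a) I would exhibit an explicit isomorphism to a normalized object by a constant \emph{gauge transformation}. Write $w := \eta(1,1)$ and let $\beta: G \to H$ be the constant map $\beta(g):=w^{-1}$. Transporting $(\phi,f,\eta)$ along the natural isomorphism with component $\beta$ produces, via the formulas \cref{CA6*,CA7*,CA8*}, a new triple $(\phi',f',\eta')$; since transport of structure along an invertible natural transformation preserves the axioms \cref{CA1*,CA2*,CA3*,CA4*,CA5*}, this triple is again an object of $\AUT^{fun}(\Gamma)$, isomorphic to the original one. It remains to check it is normalized, and since only $\beta(1)=w^{-1}$ enters at the unit, the computation is short. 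By \cref{lem:funautnorm:b*} we have $\phi(1)=t(w)$, so $\phi'(1)=t(w^{-1})\phi(1)=t(w^{-1})t(w)=1$. For the multiplicator I use the Peiffer identity $\alpha(t(h),h')=hh'h^{-1}$ to evaluate $\alpha(\phi(1),\beta(1))=\alpha(t(w),w^{-1})=w^{-1}$, whence $\eta'(1,1)=\beta(1)\,\alpha(\phi(1),\beta(1))\,\eta(1,1)\,\beta(1)^{-1}=w^{-1}w^{-1}ww=1$. Thus $(\phi',f',\eta')$ is normalized.

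For part (b) assume $\phi(1)=1$ and $\eta(1,1)=1$, so that $\alpha(\phi(1),-)=\id$. To obtain \cref{lem:funautnorm:c*} I substitute $(g,g',g'')=(1,1,g)$ into \cref{CA5*}: using $\eta(1,1)=1$ this collapses to $\eta(1,g)=\eta(1,g)\eta(1,g)$, forcing $\eta(1,g)=1$; the substitution $(g,1,1)$ similarly gives $\eta(g,1)^2=\eta(g,1)$, i.e. $\eta(g,1)=1$. For \cref{lem:funautnorm:d*} the key move is to specialise \cref{CA4*} at $h'=1$. Using $t(1)=1$, $\alpha(g,1)=1$, $\alpha(\phi(g),1)=1$ and $f(1,g')=1$ (which is \cref{lem:funautnorm:a*}), the identity reduces to $\eta(t(h)g,g')\,f(h,gg')=f(h,g)\,\eta(g,g')$. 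Setting $g=1$ and invoking $\eta(1,g')=1$ from \cref{lem:funautnorm:c*} yields $\eta(t(h),g')\,f(h,g')=f(h,1)$, which is exactly $f(h,g')=\eta(t(h),g')^{-1}f(h,1)$.

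I expect the only genuine subtlety to lie in part (a): one must be certain that the gauge-transformed triple is a bona fide object rather than a merely formal expression (this is the transport-of-structure principle, and if one wishes to avoid invoking it one simply verifies \cref{CA1*,CA2*,CA3*,CA4*,CA5*} for $(\phi',f',\eta')$ by a routine conjugation computation), and that the Peiffer identity is applied in the correct non-abelian order when checking $\eta'(1,1)=1$. Everything in part (b) is then a mechanical matter of choosing the substitutions $(1,1,g)$, $(g,1,1)$ in \cref{CA5*} and $h'=1$, $g=1$ in \cref{CA4*} and reading off the result.
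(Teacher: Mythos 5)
Your proof is correct and follows essentially the same route as the paper: the constant gauge $\beta(g)=\eta(1,1)^{-1}$ transported via \cref{CA6*,CA7*,CA8*} for part (a), and the substitutions $(1,1,g)$, $(g,1,1)$ in \cref{CA5*} and $h'=1$, $g=1$ in \cref{CA4*} for part (b). The Peiffer-identity computation showing $\eta'(1,1)=1$ matches the paper's own verification.
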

  
\begin{proof}
For \cref{lem:th:aut:2:a*} consider an object $(\phi,f,\eta)$. We define $\beta(g) := \eta(1,1)^{-1}$. Using \ref{CA6}, \ref{CA7}  and \ref{CA8} as definitions of $\phi'$, $f'$, and $\eta'$, we obtain an isomorphic object $(\phi',f',\eta')$ that is normalized.
For \cref{lem:funautnorm:c*} we put $g=g'=1$ in \ref{CA5} and obtain 
\begin{equation*}
1=\eta(1,1)=\alpha(\phi_0(1),\eta(1,g))=\alpha(t(\eta(1,1)),\eta(1,g)) = \eta(1,1) \cdot \eta(1,g) \cdot \eta(1,1)^{-1}=\eta(1,g)\text{,}
\end{equation*}
and thus $\eta(1,g)=1$. Putting $g'=g''=1$ in  \ref{CA5}  we get
$\eta(g,1)=1$.
We get \cref{lem:funautnorm:d*} by putting  $h'=1$ and $g=1$ in \ref{CA4}. 
\end{proof}  

An object $(\phi,f,\eta)$ in $\AUT^{fun}(\Gamma)$ is called \emph{UZ-normalized} if it is normalized, $\eta:G \times G \to H$ takes values in the abelian subgroup $U := \mathrm{ker}(t) \subset H$, and $\eta|_{Z \times Z}=1$. 

\begin{lemma}
\label{lem:th:aut:3}
\begin{enumerate}[(a)]

\item 
\label{lem:th:aut:3:a}
Under assumptions \cref{aut:1*,aut:2*,aut:3*}, every object in $\AUT^{fun}(\Gamma)$ is isomorphic to a $UZ$-normalized one. 

\item
\label{lem:th:aut:3:b}
If $(\phi,f,\eta)$ is UZ-normalized, then we have:
\begin{enumerate}[({CA-UZ}1),leftmargin=*]
\itemindent=2em

\item
\label{lem:funautnorm:f}
The map $f': H \to H$ with $f'(h) := f(h,1)$ is a group homomorphism.

\item 
\label{lem:funautnorm:e}
$\phi:G \to G$ is a group homomorphism.

\end{enumerate}

\end{enumerate}
\end{lemma}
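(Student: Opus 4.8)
The plan is to dispatch part (b) first, since it is a direct unwinding of the axioms, and then to prove part (a) by a normalize-then-correct argument. For part (b), assume $(\phi,f,\eta)$ is UZ-normalized. Claim \ref{lem:funautnorm:e} is immediate: since $\eta$ takes values in $U=\ker(t)$ we have $t(\eta(g,g'))=1$, so \ref{CA3} collapses to $\phi(g)\phi(g')=\phi(gg')$. For \ref{lem:funautnorm:f}, write $f'(h):=f(h,1)$ and expand $f'(h'h)=f(h'h,1)$ via \ref{CA2} at $g=1$, obtaining $f'(h'h)=f(h',t(h))\cdot f'(h)$; then rewrite $f(h',t(h))$ using \ref{lem:funautnorm:d} (valid because $(\phi,f,\eta)$ is normalized) as $\eta(t(h'),t(h))^{-1}f'(h')$, and note that $t(h'),t(h)\in Z$ together with $\eta|_{Z\times Z}=1$ gives $\eta(t(h'),t(h))=1$. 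Hence $f'(h'h)=f'(h')f'(h)$.

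For part (a), by \cref{lem:th:aut:2:a} I may start from a normalized object $(\phi,f,\eta)$. First I would observe that under \ref{aut:1} and \ref{aut:2} the object is automatically \emph{almost} UZ-normalized: the map $(g,g')\mapsto t(\eta(g,g'))=\phi(g)\phi(g')\phi(gg')^{-1}$ is smooth from the connected manifold $G\times G$ into the discrete group $Z=\mathrm{im}(t)$, hence constant, with value $1$ at $(1,1)$; so $t\circ\eta\equiv 1$, i.e.\ $\eta$ is $U$-valued, and \ref{CA3} then yields multiplicativity of $\phi$. Since $\phi(t(h))=t(f(h,1))$ by \ref{CA1} at $g=1$, we also get $\phi(Z)\subseteq Z$. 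The only remaining task is to arrange $\eta|_{Z\times Z}=1$ by an isomorphism. Because $Z=\mathrm{im}(t)$ acts trivially on $U$ and $\phi$ is multiplicative with $\phi(Z)\subseteq Z$, axiom \ref{CA5} restricted to $Z\times Z\times Z$ is exactly the statement that $\eta|_{Z\times Z}$ is a normalized group $2$-cocycle on the free abelian group $Z$ with values in $U$ and trivial action.

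The heart of the matter is to show this $2$-cocycle is a coboundary. Evaluating \ref{CA4} at $g=g'=1$ (using $\alpha(1,-)=\mathrm{id}$, $\phi(1)=1$, $\eta(1,1)=1$) produces the identity $\eta(t(h),t(h'))=f'(h)f'(h')f'(hh')^{-1}$, so $\eta|_{Z\times Z}$ is precisely the failure of $f'=f(\cdot,1)$ to be multiplicative, pushed into $U$. As $Z$ is free abelian, $H^2(Z;U)\cong\mathrm{Hom}(\wedge^2 Z,U)$ via antisymmetrization, so the class $[\eta|_{Z\times Z}]$ is detected by the alternating pairing $c(z,z')=\eta(z,z')-\eta(z',z)$; a short manipulation of the displayed identity rewrites $c(z,z')$ as $f'$ applied to the commutator $[h,h']\in U$ of lifts $t(h)=z,\ t(h')=z'$, i.e.\ $f'$ of the commutator pairing of $H$. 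This pairing vanishes in $U$ in the present setting (this is where the concrete structure enters: for $\TD[n]$ the group $H$ is abelian, the pairing being integer-valued and hence $0$ in $\ueins$). Therefore $[\eta|_{Z\times Z}]=0$, and I may pick $\beta_0:Z\to U$ with $\beta_0(1)=0$ whose coboundary equals $-\eta|_{Z\times Z}$.

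Finally I would extend $\beta_0$ to a smooth map $\beta:G\to U$ with $\beta(1)=0$ — available because $Z$ is discrete and closed in $G\cong\R^n$, via bump functions around the isolated lattice points — and feed $\beta$ into \ref{CA6}, \ref{CA7}, \ref{CA8} to obtain an isomorphic object $(\phi',f',\eta')$. Since $\beta$ is $U$-valued, \ref{CA6} gives $\phi'=\phi$ (still multiplicative, so $\eta'$ is again $U$-valued), \ref{CA8} at $(1,1)$ preserves normalization, and on $Z\times Z$ the trivial action turns \ref{CA8} into $\eta'|_{Z\times Z}=\eta|_{Z\times Z}+\mathrm{d}\beta_0=1$; the resulting object is UZ-normalized. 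The main obstacle is the cohomological vanishing step of the third paragraph: recognizing $\eta|_{Z\times Z}$ as the non-multiplicativity cochain of $f'$ and checking that its class dies in $H^2(Z;U)$ through the triviality of the commutator pairing; the smooth extension off the lattice and the bookkeeping with \ref{CA6}--\ref{CA8} are routine by comparison.
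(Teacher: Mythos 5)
Part (b) and most of part (a) of your argument coincide with the paper's proof: the reduction to a normalized object, the use of (P1) and (P2) to force $t\circ\eta$ to be constantly $1$ so that $\eta$ is $U$-valued, the identification of $\eta|_{Z\times Z}$ as a normalized $U$-valued $2$-cocycle for the trivial $Z$-action, the smooth extension of $\beta$ off the discrete lattice $Z$ using (P3), and the bookkeeping with (CA6)--(CA8) are all exactly as in the paper and are correct. The one step where you genuinely depart is the crucial one: showing that $[\eta|_{Z\times Z}]\in \h^2(Z,U)$ vanishes. The paper disposes of this by asserting that every central extension of the free abelian group $Z$ splits. You instead observe that $\h^2(Z,U)\cong\mathrm{Hom}(\wedge^2Z,U)$ is detected by the antisymmetrization of the cocycle, identify that antisymmetrization (via (CA4) at $g=g'=1$) with a commutator expression in $H$, and then argue that it vanishes \emph{because $H$ is abelian for $\TD[n]$}. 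That last move is where your proof stops being a proof of the lemma: the statement is made -- and consumed by \cref{th:auto} -- for an arbitrary crossed module satisfying (P1)--(P3), and none of these hypotheses constrains commutators in $H$. As written, you have proved the claim only for crossed modules whose commutator pairing dies under $f'$, in particular for $\TD[n]$, not the lemma in its stated generality. (A minor computational point: the antisymmetrization one gets from (CA4) at $g=g'=1$ is $[f'(h),f'(h')]\cdot f'([h,h'])^{-1}$ rather than just a value of $f'$ on a commutator; both factors vanish when $H$ is abelian, so this does not change your conclusion in that case.)

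That said, your instinct to compute the antisymmetrization is well founded, and this is the substantive difference between the two arguments. The paper's one-line justification (choose arbitrary preimages of generators to split the extension) is itself too quick: preimages of commuting elements of $Z$ need not commute in the extension, and the integral Heisenberg group is a non-split central extension of $\Z^2$, so "free abelian quotient" alone does not force splitting -- one really must show the extension is abelian, i.e.\ that the antisymmetric part of the cocycle vanishes. You have correctly located where the work lies; to turn your proposal into a proof of the lemma as stated you would need to derive the vanishing of $[f'(h),f'(h')]\cdot f'([h,h'])^{-1}$ from (P1)--(P3) alone, or else record the triviality of the relevant commutator pairing as an additional standing hypothesis (which is harmless for the intended application to $\TD[n]$).
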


\begin{proof}
For \cref{lem:th:aut:3:a*}, by \cref{lem:th:aut:2} it suffices to prove that every normalized object $(\phi,f,\eta)$ is isomorphic to a $UZ$-normalized one.  Since $G$ is a group and thus non-empty,  \cref{aut:1*} implies that $G$ is connected. Then, since $Z\subset G$ is discrete by \cref{aut:2*}, the smooth map $t \circ \eta:G \times G \to Z$ is constant, and by \cref{lem:funautnorm:c*} even constantly $1$. This means that $\eta$ takes values in $U$, i.e. $\eta:G \times G \to U$. Since $G$  acts on $U$ via $\alpha$, the subgroup $Z$ acts, too. Since $Z=\mathrm{im}(t)$, this action is trivial (for every crossed module).
Hence, the restriction of $\eta$ to $Z \times Z$ is a 2-cocycle on $Z$ with values in the trivial $Z$-module $U$, by \cref{CA5}. It classifies a central extension of $Z$ by $U$. Since $Z$ is free abelian, every central extension of $Z$ is trivial.
Hence, there exists a map $\beta_Z:Z \to U$ such that $\beta(z_1)\beta(z_2)\eta(z_1,z_2) =   \beta
(z_1z_2)$. We can assume that $\beta_Z(1)=1$. Since $Z \subset G$ is discrete and $U$ is connected by assumption \cref{aut:3*}, $\beta_Z$ can be extended to a smooth map $\beta:G \to U$ with $\beta|_Z=\beta_Z$. 
Using \ref{CA6}, \ref{CA7}  and \ref{CA8} as definitions of $\phi'$, $f'$, and $\eta'$, we obtain an isomorphic object $(\phi',f',\eta')$ which is  normalized (since $\beta(1)=1$), $\eta'$ still takes values in $U$, and additionally satisfies 
$\eta'(z_1,z_2)=1$
for all $z_1,z_2\in Z$. Hence, $(\phi,f,\eta)$ is UZ-normalized. Part \cref{lem:th:aut:3:b*} is trivial: \cref{lem:funautnorm:f*} follows from \cref{CA2,lem:funautnorm:d*}, 
and \cref{lem:funautnorm:e*} follows from \cref{CA3}.
\end{proof}

\begin{proposition}
\label{lem:aut:step2}
Under assumptions \cref{aut:1*,aut:2*,aut:3*}  the functor $\AUT_{CI}(\Gamma) \to \AUT^{fun}(\Gamma)$ is essentially surjective. 
\end{proposition}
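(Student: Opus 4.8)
The plan is to realize, up to isomorphism, every object of $\AUT^{fun}(\Gamma)$ as the image under \cref{eq:crinttosestr} of an invertible crossed intertwiner. By \cref{lem:th:aut:3}(a), under the assumptions \cref{aut:1*,aut:2*,aut:3*} every object is isomorphic to a $UZ$-normalized triple $(\phi,f,\eta)$, so it suffices to treat such a triple. From it I would extract the map $f'\maps H \to H$, $f'(h):=f(h,1)$, which is a group homomorphism by \cref{lem:funautnorm:f}, while $\phi$ is a group homomorphism by \cref{lem:funautnorm:e}; by $UZ$-normalization $\eta$ already takes values in $U$ and vanishes on $Z\times Z$. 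The candidate object of $\AUT_{CI}(\Gamma)$ is then the triple $(\phi,f',\eta)$.

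The first task is to check that $(\phi,f',\eta)$ satisfies the crossed intertwiner axioms \cref{CI1}, \cref{CI2}, \cref{CI4}, \cref{CI5}. Axiom \cref{CI1} is \cref{CA1} at $g=1$ together with $\phi(1)=1$; axiom \cref{CI2} is the vanishing of $\eta$ on $Z\times Z$; and axiom \cref{CI5} coincides with \cref{CA5}. The one axiom that needs work is \cref{CI4}, which I would derive from the naturality identity \cref{CA4} by specializing $h=1$ and $g'=g^{-1}$ and then inserting the normalization formula \cref{lem:funautnorm:d}, $f(h,g)=\eta(t(h),g)^{-1}f(h,1)$, to rewrite the term $f(h,g^{-1})$. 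Since $\eta$ is $U$-valued and $U$ is central in $H$, the resulting $\eta$-factors commute past everything, and matching them against the factor $\alpha(\phi(g),\eta(t(h)g^{-1},g))$ in \cref{CI4} reduces to a single instance of the cocycle condition \cref{CA5}. Keeping careful track of the action $\alpha(\phi(g),-)$ on these central factors is the main computational obstacle.

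Next I would verify invertibility, i.e.\ that $\phi$ and $f'$ are Lie group isomorphisms, which by the invertibility criterion for crossed intertwiners is what places $(\phi,f',\eta)$ in $\AUT_{CI}(\Gamma)$. Here I would use that the object $(F,\mu)$ is invertible in $\AUT^{fun}(\Gamma)$, hence a weak equivalence inducing isomorphisms on $\pi_0$ and $\pi_1$. Comparing a normalized inverse $(\bar\phi,\bar f',\bar\eta)$ with the identity through \cref{CA6} gives $\bar\phi(\phi(g))=t(\beta(g))^{-1}g$; as $G\cong\R^n$ is connected and abelian while $Z=\mathrm{im}(t)$ is discrete, the continuous homomorphism $g\mapsto \bar\phi(\phi(g))g^{-1}$ into $Z$ is constantly $1$, so $\bar\phi\circ\phi=\id$ and likewise $\phi\circ\bar\phi=\id$; thus $\phi$ is an isomorphism preserving $Z$. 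For $f'$ I would apply the five lemma to the extension $1\to U \to H \xrightarrow{t} Z \to 1$: by \cref{CI1} the endomorphism $f'$ covers $\phi|_Z$ on the base, while on the kernel $f'|_U$ is exactly the map $F$ induces on $\pi_1=U$ (since $F(u,1)=(f'(u),1)$), which is an isomorphism; hence $f'$ is an isomorphism.

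Finally I would confirm that $(\phi,f',\eta)$ maps under \cref{eq:crinttosestr} precisely to $(F,\mu)$. On objects both send $g\mapsto\phi(g)$; on morphisms the induced functor sends $(h,g)$ to $(\eta(t(h),g)^{-1}f'(h),\phi(g))$, which equals $(f(h,g),\phi(g))=F(h,g)$ by \cref{lem:funautnorm:d}; and the induced multiplicator $\tilde\eta(g,g')=(\eta(g,g'),\phi(gg'))$ agrees with $\mu$ because $\eta$ was defined as the component map of $\mu$. Therefore the chosen $UZ$-normalized representative lies in the image of $\AUT_{CI}(\Gamma)$, and essential surjectivity follows.
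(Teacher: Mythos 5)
Your proposal is correct and follows essentially the same route as the paper's proof: reduce to a UZ-normalized object via \cref{lem:th:aut:3}, pass to the triple $(\phi,f',\eta)$ with $f'(h):=f(h,1)$, verify the crossed-intertwiner axioms by the same specialization of (CA4) together with the normalization identity (CA-N2), and check that the induced functor and multiplicator recover $(F,\mu)$. Two minor notes: the verification of the third crossed-intertwiner axiom in fact uses the cocycle identity (CA5) twice, applied to the triples $(t(h),g^{-1},g)$ and $(g,g^{-1},g)$, rather than a single instance; and your explicit argument that $\phi$ and $f'$ are Lie group isomorphisms is a welcome addition, since the paper leaves the invertibility of the resulting crossed intertwiner implicit even though membership in $\AUT_{CI}(\Gamma)$ requires it.
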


\begin{proof}
Let $(\phi,f,\eta)$ be an object in $\AUT^{fun}(\Gamma)$. By \cref{lem:th:aut:3:a} we can assume that it is UZ-normalized. We claim that $(\phi,f',\eta)$ is a crossed intertwiner:
\begin{enumerate}[({CI}1),leftmargin=*]

\item 
is \cref{CA1*}.

\item
is satisfied because $\eta|_{Z \times Z}=1$.

\item
is proved by the following calculation:
\begin{align*}
\eta(g,t(h)g^{-1})\cdot f'(\alpha(g,h)) 
&=\eta(g,t(h)g^{-1})\cdot f(\alpha(g,h),1)
\\&\eqcref{CA4*} f(1,g)\cdot \alpha(\phi(g),f(h,g^{-1}))\cdot \eta(g,g^{-1})
\\&\eqcref{lem:funautnorm:a*}\alpha(\phi(g),f(h,g^{-1}))\cdot \eta(g,g^{-1})
\\&\eqcref{lem:funautnorm:d*} \alpha(\phi(g),\eta(t(h),g^{-1}))^{-1} \cdot \alpha(\phi(g), f(h,1))\cdot \eta(g,g^{-1})
\\&\eqcref{CA5*} \alpha(\phi(g),\eta(t(h)g^{-1},g))\cdot \alpha(\phi(g), f'(h))
\end{align*}
In the last step, we have applied \cref{CA5*} to the triple $(t(h),g^{-1},g)$; together with \cref{lem:funautnorm:c*} and the fact that $\eta$ is $U$-valued this gives
\begin{equation*}
\eta(t(h),g^{-1})\cdot \eta(t(h)g^{-1},g) = \eta(g^{-1},g)\text{.}
\end{equation*}
We have then applied \cref{CA5*} to the triple $(g,g^{-1},g)$, which gives
\begin{equation*}
\eta(g,g^{-1}) = \alpha(\phi(g),\eta(g^{-1},g))\text{.}
\end{equation*}
Together with the centrality of $U$ in $H$, this proves the last step. 

\item
is \cref{CA4*}.

\end{enumerate}
It remains to prove that $(\phi,f',\eta)$ is mapped to the functor given by $(\phi,f,\eta)$. We have to parse through the construction of \cref{sec:CI}, where $(\phi,f',\eta)$ defines the functor $F: \Gamma \to \Gamma$  by
\begin{equation*}
F(h,g):= (\eta(t(h),g)^{-1}\cdot f'(h),\phi(g))\text{.}
\end{equation*}
This functor's assignments are on objects $\phi$, and on morphisms
\begin{equation*}
(h , g) \mapsto \eta(t(h),g)^{-1}\cdot f'(h) \eqcref{lem:funautnorm:d*} f(h,g)\text{.}
\end{equation*}
Furthermore, the natural transformation defined by $\eta$ has exactly the component map $\eta$. 
\end{proof}

\begin{proposition}
\label{lem:aut:step3}
Under assumptions \cref{aut:1*,aut:2*}, the functor $\AUT_{CI}(\Gamma) \to \AUT^{fun}(\Gamma)$ is full and faithful. 
\end{proposition}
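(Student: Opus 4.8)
The plan is to treat faithfulness and fullness separately, noting in advance that only a single step genuinely uses the hypotheses \cref{aut:1*,aut:2*}. Faithfulness comes for free, as already remarked after \cref{eq:crinttosestr}: that functor sends a crossed transformation $\beta$ to the natural transformation $\lambda$ with components $\lambda(g)=(\beta(g),\phi(g))$, and since $\beta(g)$ is recovered as the first component of $\lambda(g)$, the assignment $\beta\mapsto\lambda$ is injective on each hom-set. For fullness I would fix two invertible crossed intertwiners $(\phi_1,f_1,\eta_1)$ and $(\phi_2,f_2,\eta_2)$ with associated functors $F_1,F_2$ and multiplicators, take an arbitrary morphism $\lambda:F_1\Rightarrow F_2$ of $\AUT^{fun}(\Gamma)$, write its components as $\lambda(g)=(\beta(g),\phi_1(g))$ for a smooth map $\beta:G\to H$, and then prove that $\beta$ is a crossed transformation $(\phi_1,f_1,\eta_1)\Rightarrow(\phi_2,f_2,\eta_2)$. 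Since such a $\beta$ maps back to $\lambda$ by construction, this yields surjectivity on hom-sets.

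The key step, and the only place the assumptions enter, is to show $\phi_1=\phi_2$ and that $\beta$ takes values in $U=\mathrm{ker}(t)$. The target condition \cref{CA6*} for $\lambda$ reads $t(\beta(g))\phi_1(g)=\phi_2(g)$, so the map $g\mapsto t(\beta(g))=\phi_2(g)\phi_1(g)^{-1}$ is a smooth map from $G$ into $Z=\mathrm{im}(t)$. By \cref{aut:1*} the group $G$ is connected, and by \cref{aut:2*} the subgroup $Z$ is discrete, so this map is constant; evaluating at the unit, where $\phi_1(1)=\phi_2(1)=1$ since the $\phi_i$ are homomorphisms, shows the constant is $1$. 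Hence $\phi_1=\phi_2=:\phi$ and $t(\beta(g))=1$, i.e.\ $\beta:G\to U$, exactly as the definition of a crossed transformation demands. I expect this to be the main obstacle only in a formal sense: it is the one genuinely topological input, namely the local constancy of a continuous map into a discrete group.

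It then remains to match the remaining axioms, which is purely algebraic. Since the objects in the image of a crossed intertwiner are normalized ($\phi(1)=1$ and $\eta_i(1,1)=1$, the latter from the identities $\eta_i(g,1)=\eta_i(1,g)=1$ noted after the definition of crossed intertwiners), putting $g_1=g_2=1$ into the monoidality identity \cref{CA8*} for $\lambda$ forces $\beta(1)\beta(1)=\beta(1)$, hence $\beta(1)=1$. This same identity \cref{CA8*} is literally the crossed-transformation axiom \cref{CT2*}, so that holds with no further work. Finally, evaluating the naturality \cref{CA7*} of $\lambda$ at $g=1$, and using $\eta_i(t(h),1)=1$ so that the morphism-component of $F_i$ reduces to $f_i(h)$ at $g=1$, gives $\beta(t(h))f_1(h)=f_2(h)\beta(1)=f_2(h)$, which is precisely \cref{CT1*}. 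Thus $\beta$ is a crossed transformation mapping to $\lambda$, so the functor is full; combined with faithfulness, this completes the proof.
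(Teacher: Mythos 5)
Your proof is correct and follows essentially the same route as the paper's: the key step in both is that \cref{CA6*} makes $t\circ\beta$ a smooth map from the connected group $G$ (by \cref{aut:1*}) into the discrete subgroup $Z$ (by \cref{aut:2*}), hence constant and equal to $1$, after which \cref{CA7*} at $g=1$ and \cref{CA8*} become \cref{CT1*,CT2*}. The only organizational difference is that the paper phrases the argument as a two-way equivalence of the morphism conditions (re-verifying the converse direction explicitly), whereas you dispose of faithfulness and of the converse by appealing to the already-established form of the functor on morphisms; this is harmless.
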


\begin{proof}
Consider two crossed intertwiners $(\phi,f,\eta)$ and $(\phi',f',\eta')$ and the associated UZ-normalized objects $(\phi,\tilde f,\eta)$ and $(\phi',\tilde f',\eta')$ of $\AUT^{fun}(\Gamma)$, where $\tilde f(h,g) := \eta(t(h),g)^{-1}\cdot f(h)$. We show the following. For a smooth map $\beta: G \to H$ the following conditions are equivalent:
\begin{enumerate}[(a)]

\item 
$\beta$ satisfies \cref{CA6*,CA7*,CA8*}.

\item
We have $\phi=\phi'$, $\mathrm{im}(\beta) \subset U$ and $\beta$ satisfies \cref{CT1*,CT2*}.
\end{enumerate}
(a) means that $\beta$ is a morphism in $\AUT^{fun}(\Gamma)$ between $(\phi,\tilde f,\eta)$ and $(\phi',\tilde f',\eta')$, and (b) means that $\beta$  is a morphism in $\CI (\Gamma,\Gamma)$ between $(\phi,f,\eta)$ and $(\phi',f',\eta')$. Thus,  equivalence of (a) and (b) shows the claim of the proposition.  

Suppose (a) holds. \cref{CA8*} implies by setting $g_1=g_2=1$ that $\beta(1)\eta(1,1) = \eta'(1,1)$; since our objects are normalized, we get $\beta(1)=1$. 
Using \cref{aut:1*,aut:2*} and arguing similar as in the proof of \cref{lem:th:aut:3:a}, we have that  $t \circ \beta:G \times G \to Z$ is constant, and due to $\beta(1)=1$ constantly $1$. This means that $\beta$ takes values in $U$. Now \cref{CA6*} implies $\phi=\phi'$. \cref{CA7*} implies
$\beta(t(h))\tilde f(h,1)=\tilde f'(h,1)$; this is \cref{CT1*}. \cref{CA8*} is \cref{CT2*}. This shows (b).

Conversely, suppose (b) holds. Then, we get \cref{CA6*} since $t(\beta(g))=1$. We get \cref{CA7*}:
\begin{align*}
\beta(t(h)g)\tilde f(h,g) 
&=\beta(t(h)g)\eta(t(h),g)^{-1} f(h)
\\&\eqcref{CT2*} \eta'(t(h),g)^{-1}\beta(t(h))\alpha(\phi(t(h)),\beta(g))f(h)
\\&\hspace{-0.7em}\eqcref{CI1*} \eta'(t(h),g)^{-1}\alpha(t(f(h)),\beta(g))\beta(t(h))f(h)
\\&= \eta'(t(h),g)^{-1}\beta(t(h))f(h)\beta(g)
\\&\eqcref{CT1*} \eta'(t(h),g)^{-1}f'(h)\beta(g)
\\&=\tilde f'(h,g)  \beta(g)\text{.}
\end{align*}
Here we have used that $\beta$ is $U$-valued and $H$ acts trivially on $U$. Finally, \cref{CA8*} is the same as \cref{CT2*}. This proves (a).
\end{proof}

\begin{remark}
Yet smaller versions of the automorphism 2-group of a strict 2-group $\Gamma$ can be obtained by requiring automorphisms to be strictly multiplicative. This can be done on the level of smooth functors and on the level of crossed homomorphisms, yielding a commutative diagram 
\begin{equation*}
\alxydim{}{\AUT_{CI}(\Gamma) \ar[r] & \AUT^{fun}(\Gamma) \\ \AUT_{SI}(\Gamma) \ar@{^(->}[u] \ar[r] & \AUT^{fun}_{str}(\Gamma)\text{,}\ar@{^(->}[u]}
\end{equation*} 
with the strictly multiplicative versions in the bottom line. Here, the objects of $\AUT_{SI}(\Gamma)$ are invertible strict intertwiners, i.e. crossed intertwiners $(\phi,f,\eta)$ with $\eta=1$, and the objects of $\AUT^{fun}_{str}(\Gamma)$ are invertible smooth functors that are strictly monoidal.   In general, even under the assumptions of \cref{th:auto}, the vertical inclusion functors in the above diagram are \emph{not} equivalences; in particular, the strict versions are  insufficient to describe  automorphisms of T-duality.

\end{remark}

\begin{remark}
\label{inner-automorphisms}
Each Lie 2-group $\Gamma$ furnishes a strict 2-group homomorphism
\begin{equation*}
i:\Gamma \to \AUT^{fun}_{str}(\Gamma)\text{,}
\end{equation*}
whose image consists -- by definition -- 
of the inner automorphisms \cite{roberts1}. If $\Gamma$ is given as a crossed module, then to an object $x \in \Gamma_0=G$ the functor $i$ assigns a smooth functor $i_x : \Gamma \to \Gamma$ given by $i_x(g):= xgx^{-1}$ and $i_x(h,g) := (\alpha(x,h),xgx^{-1})$. 
One can check that $i_x$ is a strictly monoidal, strictly invertible smooth functor.
To a morphism $(k,x) \in \Gamma_1=H \ltimes_{\alpha} G$ the functor $i$ assigns the smooth natural transformation $\lambda_{k,x}: i_x \Rightarrow i_{t(k)x}$ with $\lambda_{k,x}(g) := (k\alpha(xgx^{-1},k)^{-1},xgx^{-1})$.
While the functor $i_x$ can be described as a strict intertwiner ($i_x=(\phi_x,f_x,1)$ with $\phi_x(g):=xgx^{-1}$ and $f_x(h):=\alpha(x,h)$), the natural transformation $\lambda_{k,x}$ comes -- in general -- not from a crossed transformation, since these only exist between two crossed intertwiners with the \emph{same} $\phi$. In particular, the functor $i$ in general does not land in  $\AUT_{SI}(\Gamma)$. However, if  the Lie group $\Gamma_0$ is abelian, then $\phi_x$ is the identity on the level of objects, and  $\lambda_{k,x}$ does come from a crossed transformation, namely  $\beta_{k,x}(g) :=k\alpha(xgx^{-1},k)^{-1}$. 
This shows that, for $\Gamma_0$ abelian, the  functor $i$ factors (canonically) through the inclusion $\AUT_{SI}(\Gamma) \to \AUT^{fun}_{str}(\Gamma)$, and results into a 2-group homomorphism
\begin{equation*}
i:\Gamma \to\AUT_{SI}(\Gamma)\text{.}
\end{equation*} 
\end{remark}

\setsecnumdepth{2}

\section{A model for the automorphisms of T-duality}

\label{sec:auttdual}

We briefly introduce and discuss the split pseudo-orthogonal group $\mathrm{O}^{\pm}(n,n,\Z)$ in \cref{sec:onnZ}. 
In \cref{sec:modelautos} we construct explicitly a strict Lie 2-group $\AU[n]$. In \cref{sec:realizationauto}  we establish an equivalence of 2-groups $\AU[n] \cong \AUT(\TD[n])$ using \cref{th:auto}.

\subsection{The integral split pseudo-orthogonal group}

\label{sec:onnZ}

\begin{definition}
\label{eq:defskewiso}
The \emph{integral split pseudo-orthogonal group} $\mathrm{O}^{\pm}(n,n,\Z)$ is the group of isometries up to sign of the indefinite symmetric bilinear form
\begin{equation*}
I := \begin{pmatrix}0 & E_n \\
E_n & 0 \\
\end{pmatrix} \in \Z^{2n \times 2n}\text{,}
\end{equation*}
i.e. it consists of matrices $A\in \mathrm{GL}(2n,\Z)$ such that 
\begin{equation*}
A^{tr} \cdot I \cdot A= \pm  I\text{. }
\end{equation*}
\end{definition}

There are two interesting group homomorphisms:
\begin{equation*}
\det :\mathrm{O}^{\pm}(n,n,\Z) \to \Z_2
\quand
\mathrm{iso}: \mathrm{O}^{\pm}(n,n,\Z) \to \Z_2
\end{equation*}
where $\mathrm{iso}(A)$ indicates if $A$ is an isometry or a pseudo-isometry. Since $I^2=E_{2n}$ we have $A^{tr}IAI=\mathrm{iso}(A)E_{2n}$ for all $A\in \mathrm{O}^{\pm}(n,n,\Z)$; this gives a formula that can be used to compute $\mathrm{iso}(A)$ from a given matrix. 
We note that $\mathrm{iso}(A^{tr})=\mathrm{iso}(A)$ and $\mathrm{O}(n,n,\Z)=\mathrm{ker}(\mathrm{iso})$ is the proper \emph{integral split orthogonal group}, so that 
$\mathrm{O}^{\pm}(n,n,\Z)=\mathrm{O}(n,n,\Z) \ltimes \Z_2$.
\begin{remark}
\label{subgroup}
We consider the following  elements and subgroups of $\mathrm{O}(n,n,\Z)$, see \cite[§2.4]{Giveon1994}: 
\begin{enumerate}[(a)]

\item 
\label{subgroup-Z}
$I, -I\in \mathrm{O}(n,n,\Z)$.
We will later look at the subgroup $Z:=\left \langle I  \right \rangle\cong \Z/2\Z$.

\item
Let $V_{ij}\in \mathrm{GL}(2n,\Z)$ be the permutation matrix, i.e.,  $a\mapsto V_{ij}a$ permutes the $i$th and $j$th component of $a\in \Z^{2n}$. Then, $V_{ij}\in \mathrm{O}(n,n,\Z)$ if $j=i+n$, and we write $V_i := V_{i,i+n}$. 
We let $V:=\left \langle V_{i} \sep 1\leq i \leq n  \right \rangle \subset \mathrm{O}(n,n,\Z)$ be the subgroup generated by these permutations. Note that $V\cong (\Z/2\Z)^{n}$. The elements $V_i$ are called \quot{factorized duality} in \cite{Giveon1994}.   

\item
\label{subgroup-GLnZ}
$\mathrm{O}(n,n,\Z)$ receives an injective group homomorphism
\begin{equation*}
D:\mathrm{GL}(n,\Z) \to \mathrm{O}(n,n,\Z):A \mapsto D_A := \begin{pmatrix}
A & 0 \\
0 & (A^{tr})^{-1} \\
\end{pmatrix}\text{.}
\end{equation*}
In particular, this shows that $\mathrm{O}(n,n,\Z)$ is infinite and non-abelian for $n>1$.
The subgroups $Z\subset \mathrm{O}(n,n,\Z)$ and $\mathrm{O}(n,\Z) \subset \mathrm{GL}(n,\Z)\subset \mathrm{O}(n,n,\Z)$ commute:
$ID_A =D_AI$
for all $A\in \mathrm{O}(n,\Z)$. 
The elements $D_A$ are called \quot{base change} in \cite{Giveon1994}.

\item
\label{subgroup-sonZ}
Let $\mathfrak{so}(n,\Z)\subset \Z^{n \times n}$ be the additive group of skew-symmetric matrices with integer entries. There is an injective group homomorphism
\begin{equation*}
\mathfrak{so}(n) \to \mathrm{O}(n,n,\Z):  B \mapsto 
e^{B} :=\begin{pmatrix}
E_n & 0 \\
B & E_n \\
\end{pmatrix} \text{.}
\end{equation*}
The elements $e^{B}$ are called \quot{integer theta-parameter shift} in \cite{Giveon1994}.

\end{enumerate}
\end{remark}

\begin{remark}
\label{subgroup-K}
In addition to the subgroups of $\mathrm{O}(n,n,\Z)$ listed in \cref{subgroup} there is a Klein 4-group $K=\Z/2\Z \times \Z/2\Z$ that embeds into $\mathrm{O}^{\pm}(n,n,\Z)$ via
\begin{equation*}
(\alpha,\hat\alpha) \mapsto K_{\alpha,\hat\alpha}:= \begin{pmatrix}
\alpha E_n & 0 \\
0 & \hat\alpha E_n \\
\end{pmatrix}\text{.}
\end{equation*}
Note that while the subgroups (a) -- (d) are subgroups of $\mathrm{O}(n,n,\Z)$,  $K$ is a subgroup only of the larger group $\mathrm{O}^{\pm}(n,n,\Z)$, as $\mathrm{iso}(K_{\alpha,\hat\alpha})=\alpha\hat\alpha$. The subgroup $K$ was found in \cite{Kim2022} and was added here in a revised version.  
\end{remark}

\begin{example}
\label{ex:n=1}
For $n=1$, a matrix
\begin{equation*}
A=\begin{pmatrix}a & b \\
c &d \\
\end{pmatrix}\in \mathrm{GL}(2,\Z)
\end{equation*}
lies in $\mathrm{O}^{\pm}(n,n,\Z)$ if and only if
$ac=bd=0$ and $ad+bc=\pm 1$.
This nails it down to eight possible matrices:
\begin{center}
\begin{tabular}{cccc}
$\begin{pmatrix}1 & 0 \\
0 & 1 \\
\end{pmatrix}$ & $\begin{pmatrix}-1 & 0 \\
0 & -1 \\
\end{pmatrix}$ & $\begin{pmatrix}0 & 1 \\
1 & 0 \\
\end{pmatrix}$ & $\begin{pmatrix}0 & -1 \\
-1 & 0 \\
\end{pmatrix}$ \\
$\begin{pmatrix}0 & -1 \\
1 & 0 \\
\end{pmatrix}$ & $\begin{pmatrix}0 & 1 \\
-1 & 0 \\
\end{pmatrix}$ & $\begin{pmatrix}1 & 0 \\
0 & -1 \\
\end{pmatrix}$ & $\begin{pmatrix}-1 & 0 \\
0 & 1 \\
\end{pmatrix}$ 
\end{tabular}
\end{center}
We have $\mathrm{iso}(A)=ad+bc$. Hence, the first line contains the four matrices in $\mathrm{O}(n,n,\Z)$. Obviously, $\mathrm{O}(1,1,\Z)$ is a Klein four-group, and $\mathrm{O}^{\pm}(1,1,\Z)$ is the dihedral group $D_4$, where \begin{equation*}
R:=\begin{pmatrix}0 & -1 \\
1 & 0 \\
\end{pmatrix}
\quand
I=\begin{pmatrix}0 & 1 \\
1 & 0 \\
\end{pmatrix}
\end{equation*}
correspond to the rotation by an angle of $\frac{\pi}{2}$ and a reflection, respectively. 
\end{example}

\subsection{A 2-group version of the split pseudo-orthogonal group}

\label{sec:modelautos}

We define a strict 2-group $\AU[n]$ of which we will see in \cref{sec:realizationauto} that it models the automorphism 2-group of $\TD[n]$.
We consider pairs $(A,\eta)$ consisting of a matrix $A\in \mathrm{O}^{\pm}(n,n,\Z)$ and a smooth map 
\begin{equation*}
\eta: \R^{2n} \times \R^{2n} \to \ueins
\end{equation*}
satisfying the following conditions for all $a,a',a''\in \R^{2n}$ and $m,m'\in \Z^{2n}$:
\begin{enumerate}[({A}1),leftmargin=*]

\item
\label{AU:1}
$\eta(m,m')=0$

\item
\label{AU:2}
$\eta(m,a) +\mathrm{iso}(A) [a,m]=\eta(a,m)+[Aa,Am]$

\item
\label{AU:3}
$\eta(a,a')+ \eta(a+a',a'')=\eta(a',a'')+ \eta(a,a'+a'')$

\end{enumerate}
Here, $[-,-]: \R^{2n} \times \R^{2n} \to \R$ was introduced in \cref{sec:intro}, where we also agreed on the convention that $\ueins=\R/\Z$. 
We note that \cref{AU:3} is the usual cocycle condition for group 2-cocycles.
The set $\AU[n,0]$ of all such pairs is a group under the product
\begin{equation*}
(A_1,\eta_1)\cdot (A_2,\eta_2) := (A_1A_2,\tilde\eta)
\end{equation*}
with
\begin{align*}
\tilde\eta(a,a') &:=\eta_1(A_2a,A_2a')+\mathrm{iso}(A_1)  \eta_2(a,a')\text{.}
\end{align*}
The unit is $(E_{2n},0)$, and the inverse of $(A,\eta)$ is $(A^{-1},\eta^{-1})$ with
$\eta^{-1}(a,a'):=-  \mathrm{iso}(A) \eta(A^{-1}a,A^{-1}a')$.

\begin{remark}
\cref{AU:1*,AU:3*} imply $\eta(0,a)=\eta(a,0)=0$ for all $a\in \R^{2n}$.
\end{remark}

We define a 2-group $\AU[n]$ whose group of objects is $\AU[n,0]$. A morphism between $(A,\eta)$ and $(A',\eta')$ will exists only if $A=A'$. In this case, a morphism is a smooth map  $\beta: \R^{2n} \to \ueins$ satisfying
\begin{enumerate}[({A}1),leftmargin=*]
\setcounter{enumi}{3}

\item
\label{AU:4}
$\beta(m)=0$ for all $m\in \Z^{2n}$. 

\item
\label{AU:5}
$\beta(a_1)+\beta(a_2)+\eta(a_1,a_2) = \eta'(a_1,a_2)+  \beta
(a_1+a_2)$ for all $a_1,a_2\in \R^{2n}$. 

\end{enumerate}
Composition of morphisms is the point-wise addition, and the identity morphism is the zero map. The product (horizontal composition) of morphisms
\begin{equation*}
\beta_1:(A_1,\eta_1) \to (A_1',\eta'_1)
\quand
\beta_2:(A_2,\eta_2) \to (A_2',\eta'_2)
 \end{equation*}
 is given by
\begin{equation*}
(\beta_1 \cdot \beta_2)(a) := \beta_1(A_2a)+ \mathrm{iso}(A_1)\beta_2(a)\text{.} 
\end{equation*}
It is straightforward to show that this defines a strict 2-group $\AU [n]$.

\begin{remark}
\label{re:automorphismsofautomorphisms}
If $\beta:\R^{2n} \to \ueins$ is an \emph{auto}morphism of some object $(A,\eta)$, then it is a group homomorphism by \cref{AU:5*}, and by \ref{AU:4} even a group homomorphism $\T^{2n} \to \ueins$. Since the composition of automorphisms is their point-wise product, we have a group isomorphism
\begin{equation*}
\mathrm{Aut}_{\AU[n]}(A,\eta) =\mathrm{Hom}^{\infty}(\T^{2n},\ueins)\cong \Z^{2n}\text{.} \end{equation*}
\end{remark}

\subsection{Implementation as crossed automorphisms of T-duality}

\label{sec:realizationauto}

We define a strict 2-group homomorphism
\begin{equation*}
\mathbb{I}:\AU[n] \to \AUT_{CI}(\TD[n])
\end{equation*}
that implements the previously defined 2-group $\AU[n]$ as crossed intertwiners of $\TD[n]$. Let us recall the definition of the 2-group $\TD_n$, which was already given in \cref{sec:intro}.

\begin{definition}
\label{def-TDn}
The \emph{T-duality 2-group} is the crossed module $\TD_n := (G,H,t,\alpha)$ consisting of the Lie groups $G:=\R^{2n}$ and $H:=\Z^{2n} \times \ueins$, the Lie group homomorphism $t: H \to G$  given by  $t(m,s):=m$, and the action $\alpha$ of $G$ on $H$ given by  $\alpha(a,(m,s))=(m,s-[a,m])$. 
\end{definition}

Here,  $[-,-]:\R^{2n} \times \R^{2n} \to \R$ is the bilinear form with matrix 
\begin{equation*}
J=\begin{pmatrix}
0 & 0 \\
E_n & 0 \\
\end{pmatrix}\text{.}
\end{equation*}
The two axioms of the crossed module $\TD_n$ are trivially satisfied.
The central subgroup $U \subset H$ is $U=\ueins$, and the induced action of $G$ on $U$ is trivial. 

An object $(\phi,f,\eta)$ in $\AUT_{CI}(\TD_n)$ consists  of Lie group isomorphisms  
$\phi:\R^{2n} \to \R^{2n}$  and $f: \Z^{2n} \times \ueins \to \Z^{2n} \times \ueins$, and of a smooth map  $\eta: \R^{2n} \times \R^{2n} \to \ueins$, satisfying the axioms  \cref{CI1*,CI2*,CI4*,CI5*}. 
We notice that \cref{CI5*} simplifies a bit to
\begin{enumerate}[({CI}4-TD),leftmargin=*]

\item
\label{CI4-TD}
$\eta(a,a')+ \eta(a+a',a'')=\eta(a',a'')+ \eta(a,a'+a'')$,

\end{enumerate}
as the action $\alpha$ of $G$ on $U$ is trivial for $\TD_n$. In turn, this can be used to show  
\begin{align*}
\eta(a,m-a)&=\eta(a,-a+m) =- \eta(-a,m)+ \eta(a,-a)
\\
\eta(m-a,a)&=-\eta(m,-a)+ \eta(-a,a)
\\
\eta(a,-a)&=- \eta(0,a)+\eta(-a,a)+ \eta(a,0)=\eta(-a,a)
\text{.}
\end{align*}
Using these, \cref{CI4*} also simplifies to
\begin{enumerate}[({CI}3-TD),leftmargin=*]

\item
\label{CI3-TD}

$\eta(m,-a)+ f(\alpha(a,(m,s)))= \eta(-a,m)+ \alpha(\phi(a), f(m,s))$.
\end{enumerate}

 In order to construct the 2-group homomorphism $\mathbb{I}$, we associate to a pair $(A,\eta)$ the crossed intertwiner $\mathbb{I}(A,\eta):=(\phi_A,f_A,\eta)$ where $\phi_A(a) := Aa$ is matrix multiplication,   $f_A(m,s):= (Am,\mathrm{iso}(A)s)$, and $\eta$ is  what it is. 

\begin{lemma}
\label{ci-of-pairs}
The triple $(\phi_A,f_A,\eta)$ is a crossed intertwiner, and the assignment $(A,\eta) \mapsto (\phi_A,f_A,\eta)$ is an injective group homomorphism. 
\end{lemma}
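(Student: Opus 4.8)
The plan is to treat the lemma as three separate verifications: that each triple $(\phi_A,f_A,\eta)$ satisfies the crossed-intertwiner axioms \cref{CI1*,CI2*,CI4*,CI5*}, that $(A,\eta)\mapsto(\phi_A,f_A,\eta)$ respects composition and the unit, and that it is injective. Throughout I would write the group laws of $G=\R^{2n}$ and of $H=\Z^{2n}\times\ueins$ additively, and lean on the two facts recorded at the start of \cref{sec:realizationauto}: that $U=\ueins$ is central in $H$ and that $G$ acts trivially on $U$. Since $\eta$ is $U$-valued, every term $\alpha(\phi_A(a),\eta(\cdots))$ appearing in \cref{CI4*,CI5*} simply disappears, and this is exactly what makes the crossed-intertwiner axioms collapse onto the defining conditions \cref{AU:1*,AU:2*,AU:3*} of $\AU[n]$.

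For the axioms, \cref{CI1*} is immediate since both sides equal $Am$, and \cref{CI2*} holds because $t(h)\in\Z^{2n}$, so it is literally condition \cref{AU:1*}. With the action on $U$ trivial, \cref{CI5*} becomes verbatim the cocycle condition \cref{AU:3*}. The one substantial point — and the step I expect to be the main obstacle — is \cref{CI4*}. Setting $g=a$ and $h=(m,s)$, the $\Z^{2n}$-components of both sides agree automatically and the $\mathrm{iso}(A)s$ contributions cancel, so \cref{CI4*} reduces to the scalar identity in $\ueins$
\begin{equation*}
\eta(a,m-a)-\eta(m-a,a)=\mathrm{iso}(A)[a,m]-[Aa,Am]\text{.}
\end{equation*}
To prove this I would first apply the cocycle identity \cref{AU:3*} to the triple $(a,m-a,a)$, which yields $\eta(a,m-a)+\eta(m,a)=\eta(m-a,a)+\eta(a,m)$, hence $\eta(a,m-a)-\eta(m-a,a)=\eta(a,m)-\eta(m,a)$; the right-hand side is then exactly $\mathrm{iso}(A)[a,m]-[Aa,Am]$ by \cref{AU:2*}. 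The trick is thus to route everything through the symmetric comparison $\eta(a,m)-\eta(m,a)$, where \cref{AU:2*} directly applies.

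For the homomorphism property I would compare the composite $(\phi_{A_1},f_{A_1},\eta_1)\circ(\phi_{A_2},f_{A_2},\eta_2)$, evaluated with the composition law for crossed intertwiners defined above, against $\mathbb{I}$ applied to the product $(A_1,\eta_1)\cdot(A_2,\eta_2)=(A_1A_2,\tilde\eta)$. On objects $\phi_{A_1}\circ\phi_{A_2}$ is multiplication by $A_1A_2$, i.e. $\phi_{A_1A_2}$; on $H$ the composite $f_{A_1}\circ f_{A_2}$ sends $(m,s)$ to $(A_1A_2m,\mathrm{iso}(A_1)\mathrm{iso}(A_2)s)$, which equals $f_{A_1A_2}$ since $\mathrm{iso}$ is multiplicative; and the multiplicator component of the composite is $(a,a')\mapsto\eta_1(A_2a,A_2a')+f_{A_1}(\eta_2(a,a'))$. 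Because $f_{A_1}$ restricted to $U=\ueins$ is multiplication by $\mathrm{iso}(A_1)$, this is precisely $\tilde\eta(a,a')$, and the unit $(E_{2n},0)$ maps to $(\id,\id,0)$, so $\mathbb{I}$ is a group homomorphism. Injectivity is then immediate, as $\phi_A$ determines $A$ and the third entry $\eta$ is carried over unchanged. Finally, since $A\in\mathrm{GL}(2n,\Z)$ and $\mathrm{iso}(A)=\pm1$, both $\phi_A$ and $f_A$ are isomorphisms, so by the invertibility criterion established earlier the image actually lands in $\AUT_{CI}(\TD[n])$.
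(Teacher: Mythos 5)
Your proposal is correct and follows essentially the same route as the paper's proof: the same reduction of \cref{CI1*,CI2*,CI5*} to the defining conditions of $\AU[n]$, the same comparison of the two composition formulas for the homomorphism property, and the same (immediate) observation for injectivity. The only difference is cosmetic: for \cref{CI4*} you first isolate the $\ueins$-valued identity and settle it with a single application of \cref{AU:3*} to the triple $(a,m-a,a)$ followed by \cref{AU:2*}, whereas the paper chains \cref{AU:3*} twice through the intermediate term $\eta(a,-a)$; both are valid, and yours is marginally shorter.
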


\begin{proof}
$\phi_A$ and $f_A$ are clearly group homomorphisms, and injectivity is obvious. We check the axioms of a crossed intertwiner:
\begin{enumerate}[({CI}1),leftmargin=*]

\item
$\phi_A(t(m,s))=\phi_A(m)=Am=t(Am,\mathrm{iso}(A)s)=t(f_A(m,s))$.

\item
$\eta(t(m,s),t(m',s'))=\eta(m,m') \eqcref{AU:1*} 1$.

\item[(CI3-TD)]
is proved by the following calculation:
\begin{align*}
\eta(m,-a)+f_A(\alpha(a,(m,s)))
&=\eta(m,-a)+ f_A(m,s-[a,m])
\\&= \eta(m,-a)+ (Am,\mathrm{iso}(A)s-\mathrm{iso}(A)[a,m])
\\&\hspace{-0.55em}\eqcref{AU:2*} \eta(-a,m)+(Am,\mathrm{iso}(A)s-[Aa,Am])
\\&= \eta(-a,m)+ \alpha(Aa,(Am,\mathrm{iso}(A)s))
\\&=\eta(-a,m)+\alpha(\phi_A(a), f_A(m,s))
\end{align*}

\item[(CI4-TD)]
is exactly \cref{AU:3*}.

\end{enumerate}
We consider a product $(A_2,\eta_2)\cdot (A_1,\eta_1) = (A_2A_1,\tilde\eta)$. To $(A_2A_1,\tilde\eta)$ we assign $(\phi_{A_2A_1},f_{A_2A_1},\tilde\eta)$. On the other hand, we have from the composition of crossed intertwiners
\begin{align*}
(\phi_{A_2},f_{A_2},\eta_2) \circ (\phi_{A_1},f_{A_1},\eta_1) &=   (\phi_{A_2} \circ \phi_{A_1}, f_{A_2} \circ f_{A_1},\eta_2 \circ (\phi_{A_1} \times \phi_{A_1}) + f_{A_2} \circ \eta_1 )
\\&=(\phi_{A_2A_1},f_{A_2A_1},\tilde \eta)\text{.}
\end{align*}
This shows that we have a group homomorphism.
\end{proof}

\begin{lemma}
\label{lem:aut:equiv:morph}
For a smooth map $\beta: \R^{2n} \to \ueins$, the following are equivalent:
\begin{enumerate}[(a)]

\item 
it is a morphism  $(A,\eta)\to(A,\eta')$ in $\AU[n]$. 

\item
it is a crossed transformation $\beta:(\phi_A,f_A,\eta) \to (\phi_A,f_A,\eta')$.

\end{enumerate}
Moreover, vertical and horizontal composition in $\AU[n]$ and $\AUT_{CI}(\TD[n])$ coincide. 
\end{lemma}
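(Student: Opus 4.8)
The plan is to unwind both sets of axioms and observe that, for the specific crossed module of $\TD[n]$, they coincide \emph{verbatim}. The two features that make this work are that $H=\Z^{2n}\times\ueins$ is a direct product (hence abelian) and that the induced action of $G$ on the central subgroup $U=\ueins$ is trivial, as recorded above. Throughout I would regard a crossed transformation $\beta\maps \R^{2n}\to U$ as a map into $U=\ker(t)=\{0\}\times\ueins\subset H$, so that the value $\beta(a)$ stands for the element $(0,\beta(a))$.

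First I would establish the equivalence of (a) and (b) by checking the two crossed-transformation axioms against \cref{AU:4*} and \cref{AU:5*}. Axiom \cref{CT1*}, namely $\beta(t(h))\cdot f_A(h)=f_A(h)$, only sees $\beta$ on $\mathrm{im}(t)$; spelling out $t(m,s)=m$ and using the product of $H$ it collapses to $\beta(m)=0$ for all $m\in\Z^{2n}$, which is exactly \cref{AU:4*}. For \cref{CT2*}, $\beta(g_1)\,\alpha(\phi_A(g_1),\beta(g_2))\,\eta(g_1,g_2)=\eta'(g_1,g_2)\,\beta(g_1g_2)$, I would use that $\alpha$ is trivial on $U$, so $\alpha(\phi_A(g_1),\beta(g_2))=\beta(g_2)$, and that all four factors lie in the abelian group $U=\ueins$; writing $G=\R^{2n}$ additively then turns \cref{CT2*} into $\beta(a_1)+\beta(a_2)+\eta(a_1,a_2)=\eta'(a_1,a_2)+\beta(a_1+a_2)$, which is \cref{AU:5*}. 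Since each manipulation is reversible, this yields (a)$\Leftrightarrow$(b).

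For the compatibility of composition, vertical composition is immediate: in both settings it is the pointwise sum in $\ueins$. For horizontal composition I would substitute the present data into the crossed-transformation formula $(\beta_1\circ\beta_2)(g)=\beta_1(\phi_{A_2}(g))\cdot f_{A_1}(\beta_2(g))$, taking the outer factor to carry $A_1$ and the inner to carry $A_2$, exactly as dictated by the direction of $\mathbb{I}$ on objects (where $(A_1,\eta_1)\cdot(A_2,\eta_2)$ corresponds to $\mathbb{I}(A_1,\eta_1)\circ\mathbb{I}(A_2,\eta_2)$). Since $\phi_{A_2}(a)=A_2a$ and $f_{A_1}$ restricts on $U$ to multiplication by $\mathrm{iso}(A_1)$ (because $f_{A_1}(0,s)=(0,\mathrm{iso}(A_1)s)$), the $\ueins$-component becomes $\beta_1(A_2a)+\mathrm{iso}(A_1)\beta_2(a)$, which is precisely the horizontal product defined in $\AU[n]$.

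No step here is a genuine mathematical obstacle; the entire content is bookkeeping. The only points that require care are the additive-versus-multiplicative notation for $\ueins$ and $H$, the identification of $U$ with $\{0\}\times\ueins$, and — most easily mishandled — the ordering convention in the horizontal composition, which must be aligned with $\mathbb{I}$ so that the outer crossed transformation carries $A_1$ and the inner one $A_2$; getting this alignment right is what makes the $\mathrm{iso}(A_1)$ land on $\beta_2$ rather than on $\beta_1$.
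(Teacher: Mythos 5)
Your proposal is correct and follows essentially the same route as the paper: both proofs simply unwind the crossed-transformation axioms for the specific crossed module of $\TD[n]$, observe that \cref{CT1*} collapses to \cref{AU:4*} and \cref{CT2*} to \cref{AU:5*} (the paper leaves implicit the use of the trivial $G$-action on $U$ that you spell out), and then note that the vertical and horizontal composition formulas coincide term by term. Your extra care about the ordering convention in the horizontal composition and the identification $f_{A_1}|_U = \mathrm{iso}(A_1)$ matches exactly what the paper's displayed formulas encode.
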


\begin{proof}
Conditions for (a) are
\begin{enumerate}[({A}1),leftmargin=*]
\setcounter{enumi}{3}

\item
$\beta(n)=0$ for all $n\in \Z^{2n}$. 

\item
$\beta(a_1)+\beta(a_2)+\eta(a_1,a_2) = \eta'(a_1,a_2)+  \beta
(a_1+a_2)$ for all $a_1,a_2\in \R^{2n}$. 
\end{enumerate}
Conditions for (b) are:
\begin{enumerate}[({CT}1),leftmargin=*]

\item
$\beta(n)+ f_A(n,s)=f_A(n,s) $

\item
$\beta(a_1)+\beta(a_2)+\eta(a_1,a_2) = \eta'(a_1,a_2)+  \beta
(a_1+a_2)$. 

\end{enumerate}
These conditions are equivalent. The vertical composition is  in both cases addition. Horizontal composition in $\AU[n]$ is given by
\begin{equation*}
(\beta_1 \cdot \beta_2)(a) := \beta_1(A_2a)+ \mathrm{iso}(A_1)\beta_2(a) 
\end{equation*}
and horizontal composition in $\AUT_{CI}(\TD[n])$ is given by
\begin{equation*}
(\beta_1 \circ \beta_2)(a) := \beta_1(\phi_{A_2}(a))\cdot f_{A_1}(\beta_2(a))\text{.} 
\end{equation*}
These formulas coincide, too.
\end{proof}

Thus, we have constructed a 2-group homomorphism
\begin{equation*}
\mathbb{I}:\AU[n] \to \AUT_{CI}(\TD[n])\text{.}
\end{equation*}

\begin{theorem}
\label{prop:modelCI}
The 2-group homomorphism $\mathbb{I}$ is an equivalence, 
\begin{equation*}
\AU[n] \cong \AUT_{CI}(\TD[n])\text{.}
\end{equation*}
\end{theorem}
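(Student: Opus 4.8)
The plan is to supply the single missing ingredient, \emph{essential surjectivity on objects}: by \cref{lem:aut:equiv:morph} the functor $\mathbb{I}$ induces a bijection on morphism sets and matches both compositions, so it is fully faithful, and the preceding lemma shows it is an injective group homomorphism on objects. Once I show in addition that every object of $\AUT_{CI}(\TD[n])$ is isomorphic to one in the image, $\mathbb{I}$ is an equivalence of $2$-groups, which is the asserted isomorphism. So I fix an object of $\AUT_{CI}(\TD[n])$, i.e.\ an invertible crossed intertwiner $(\phi,f,\eta)$ of $\TD[n]$, and show it is isomorphic to some $\mathbb{I}(A,\eta)=(\phi_A,f_A,\eta)$.

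First I would pin down the shape of $\phi$ and $f$. By the invertibility criterion for crossed intertwiners, $\phi\colon\R^{2n}\to\R^{2n}$ and $f\colon\Z^{2n}\times\ueins\to\Z^{2n}\times\ueins$ are Lie group isomorphisms. Every smooth automorphism of $(\R^{2n},+)$ is $\R$-linear, so $\phi$ is multiplication by a matrix $A\in\gl{2n,\R}$. The automorphism $f$ preserves the identity component $U=\{0\}\times\ueins$, restricting there to $s\mapsto\epsilon s$ with $\epsilon\in\{\pm1\}$, and it induces an automorphism of $\pi_0=\Z^{2n}$; axiom (CI1) identifies this induced map with $\phi|_{\Z^{2n}}$, forcing $A(\Z^{2n})=\Z^{2n}$, i.e.\ $A\in\gl{2n,\Z}$, and $f(m,s)=(Am,\psi(m)+\epsilon s)$ for some homomorphism $\psi\colon\Z^{2n}\to\ueins$.

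Next I would normalize and then read off the axioms. Extending $-\psi$ to an $\R$-linear map $\beta\colon\R^{2n}\to\ueins$, the crossed transformation with component $\beta$ replaces $f$ by $(m,s)\mapsto(Am,\epsilon s)$ (via (CT1)) and, being linear, fixes $\eta$ (via (CT2)); so I may assume $\psi=0$, i.e.\ $f=f_A$. Now axiom (CI2) is precisely (A1), and (CI5) is precisely the cocycle condition (A3). The content sits in (CI4): evaluating it on $(a,(m,s))$ and comparing $\ueins$-components yields $\eta(a,m-a)-\epsilon[a,m]=\eta(m-a,a)-[Aa,Am]$; rewriting the two $\eta$-terms with (A3) and using $\eta(a,-a)=\eta(-a,a)$ (a standard consequence of the crossed-intertwiner axioms) collapses this to $\eta(m,a)+\epsilon[a,m]=\eta(a,m)+[Aa,Am]$. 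Restricting the latter to $a=m'\in\Z^{2n}$ and invoking (A1) gives $[Am',Am]=\epsilon[m',m]$ for all $m,m'\in\Z^{2n}$, i.e.\ $A^{tr}JA=\epsilon J$; symmetrizing with $I=J+J^{tr}$ produces $A^{tr}IA=\epsilon I$. Hence $A\in\mathrm{O}^{\pm}(n,n,\Z)$ with $\mathrm{iso}(A)=\epsilon$, the displayed identity becomes exactly (A2), and therefore $(A,\eta)\in(\AU[n])_0$ with $\mathbb{I}(A,\eta)=(\phi_A,f_A,\eta)$ equal to the normalized object.

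The main obstacle is this last step: extracting \emph{both} the membership $A\in\mathrm{O}^{\pm}(n,n,\Z)$ and the precise value $\epsilon=\mathrm{iso}(A)$ out of the naturality axiom (CI4). The delicate part is the bookkeeping that untangles (CI4) --- which simultaneously involves the bilinear form $[-,-]$, the matrix $A$, and the $2$-cocycle $\eta$ --- using the cocycle identity (A3) together with the symmetry $\eta(a,-a)=\eta(-a,a)$; once the relation is brought into the symmetric shape $A^{tr}IA=\epsilon I$, everything else is immediate. By contrast, everything upstream (linearity of $\phi$, the normal form of $f$, and the harmless normalization of $\psi$) is routine, so I expect the write-up to concentrate on the (CI4) computation.
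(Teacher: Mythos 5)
Your architecture coincides with the paper's up to the very last step: full faithfulness via \cref{lem:aut:equiv:morph}, the normal form $\phi(a)=Aa$ with $A\in\mathrm{GL}(2n,\Z)$ and $f(m,s)=(Am,\epsilon s)$ obtained by composing with a crossed transformation, and the reduction of (CI4) to $\eta(m,a)+\epsilon[a,m]=\eta(a,m)+[Aa,Am]$ are all correct and essentially identical to the paper's computation. (Your linear extension of $-\psi$ is in fact slightly cleaner than the paper's: a homomorphism $\beta$ leaves $\eta$ unchanged by (CT2), whereas the paper extends $\beta_{\Z}$ by bump functions and has to carry a modified $\eta$ along.)

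The gap is in the extraction of $A\in\mathrm{O}^{\pm}(n,n,\Z)$. The identity $\eta(m,a)+\epsilon[a,m]=\eta(a,m)+[Aa,Am]$ is an identity in $\ueins=\R/\Z$. Setting $a=m'\in\Z^{2n}$ makes every term an integer --- $\eta(m,m')=\eta(m',m)=0$ by (A1), and $[m',m],[Am',Am]\in\Z$ --- so the restricted equation reads $0=0$ in $\ueins$ and carries no information; you cannot conclude $[Am',Am]=\epsilon[m',m]$ as an equation of integers. Worse, the intermediate claim $A^{tr}JA=\epsilon J$ is false for perfectly legitimate objects: for the flip $A=I$ one has $\mathrm{iso}(I)=1$ but $I^{tr}JI=J^{tr}\neq J$, even though $(I,[-,-])$ is an object of $\AU[n]$ corresponding to the crossed intertwiner of \cref{ex:flip}; if your derivation were valid it would exclude this automorphism. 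Only the symmetrized identity $A^{tr}IA=\epsilon I$ holds, and it has to be obtained while keeping $a$ real. This is what the paper does: the antisymmetrization $D(x,y):=\eta(x,y)-\eta(y,x)$ is bi-additive (a consequence of the cocycle condition (CI5)), smooth and skew, hence of the form $x^{tr}Ny \bmod \Z$ for a genuinely skew-symmetric real matrix $N$; comparing with $D(a,m)\equiv a^{tr}(\epsilon J-A^{tr}JA)\,m \bmod \Z$ for \emph{all} $a\in\R^{2n}$ and $m\in\Z^{2n}$ forces $N=\epsilon J-A^{tr}JA$, whence $0=N+N^{tr}=\epsilon I-A^{tr}IA$. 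The essential input --- that $a$ ranges over the connected group $\R^{2n}$, so that a mod-$\Z$ identity can be promoted to an exact one --- is precisely what your restriction to $\Z^{2n}$ throws away.
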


\begin{proof}
$\mathbb{I}$ is fully faithful by \cref{lem:aut:equiv:morph}. It remains to prove that our functor is essentially surjective.
Let $(\phi,f,\eta)$ be an object in $\AUT_{CI}(\TD[n])$. The first step is to change $(\phi,f,\eta)$ to an isomorphic object of $\AUT_{CI}(\TD[n])$.  We notice that
\cref{CI1*} shows that $f(m,s) = (\phi(m),\tilde f(m,s))$, for a Lie group homomorphism \begin{equation*}
\tilde f: \Z^{2n} \times \ueins \to \ueins\text{,}
\end{equation*}
and it also shows that $\phi(\Z^{2n}) \subset \Z^{2n}$, i.e. $\phi\in \mathrm{GL}(\Z^{2n})$.
We split $\tilde f$ further into two Lie group homomorphisms $\beta_{\Z}: \Z^{2n} \to \ueins$ and $f'': \ueins \to \ueins$ such that $\tilde f(m,s)=f''(s)-\beta_{\Z}(m)$.
Since $\Z^{2n} \subset \R^{2n}$ is discrete and $\ueins$ is connected, $\beta_{\Z}$ can be extended to a smooth map $\beta:\R^{2n} \to \ueins$ with $\beta|_{\Z^{2n}}=\beta_{\Z}$. 
We regard $\beta$ as a crossed transformation $\beta:(\phi,f,\eta) \Rightarrow (\phi,f',\eta')$, where $f'$ and $\eta'$ are defined such that \cref{CT1*,CT2*} are satisfied, i.e.,
\begin{align*}
f'(m,s) &:= \beta(m)+ f(m,s) 
\\
\eta'(a_1,a_2) &:= \beta(a_1)+\beta(a_2)+\eta(a_1,a_2)-\beta(a_1+a_2) \text{.}
\end{align*} 
By construction, $f'(m,s)=(\phi(m),f''(s))$. Summarizing, we can assume that our automorphism $(\phi,f,\eta)$ satisfies of $\phi(a)=Aa$ for a matrix $A\in \mathrm{GL}(2n,\Z)$ and $f(m,s)=(Am,\varepsilon s)$ for  $\varepsilon=\pm 1$ and $\eta$ is such that
\begin{enumerate}[(a)]

\item
$\eta(m,m') =0$ for all $m,m'\in \Z^{2n}$. 

\item
$\eta(m,a)+\varepsilon[a,m]=\eta(a,m)+[Aa,Am]$.

\item
$\eta(a,a')+ \eta(a+a',a'')=\eta(a',a'')+ \eta(a,a'+a'')\text{.}$
\end{enumerate}
The second step is to show that $A\in \mathrm{O}^{\pm}(n,n,\Z)$. 
Using the matrix $J$ of the bilinear form $[-,-]$, we  write (b) as:
\begin{equation*}
\eta(m,a)-\eta(a,m)= a^{tr} (A^{tr}J A-\varepsilon J)m\text{,}
\end{equation*}
which is -- after all -- an equation of $\ueins=\R/\Z$ numbers.
The left hand side is skew-symmetric under exchanging $a$ with $m$, and we claim that this implies that the matrix $A':=A^{tr}J A-\varepsilon J\in \Z^{n\times n}$ is skew-symmetric, too. Indeed, suppose $A'\in \Z^{n \times n}$ is a matrix such that for all $m\in \Z^{n}$ and $a\in \R^{n}$ we have $a^{tr}A' m=-m^{tr}A'a$ modulo $\Z$. That is, $a^{tr}(A'+A'^{tr})m\in \Z$, and so, for every $m\in \Z^{n}$, $d_m:=(A'+A'^{tr})m\in \Z^{n}$ has an integer-valued pairing against all $a\in \R^{n}$. This implies $d_m=0$. Since this is true for every $m\in \Z^{n}$, we conclude $A'+A'^{tr}=0$, proving the claim.

We obtain then\begin{equation*}
0=A'+A'^{tr}=-\varepsilon J +A^{tr}J A-\varepsilon J^{tr}+A^{tr} J^{tr}A=-(\varepsilon I-A^{tr}IA)\text{.}
\end{equation*}
This shows that $A\in \mathrm{O}^{\pm}(n,n,\Z)$, and $\varepsilon=\mathrm{iso}(A)$. 
The last step is to see that $(A,\eta)$ is an object of $\AU[n]$ -- indeed, (a), (b), and (c) above yield \cref{AU:1*,AU:2*,AU:3*}. Moreover, it is clear that $\mathbb{I}(A,\eta)=(\phi,f,\eta)$. 
\end{proof}

\begin{remark}
Up to 2-isomorphism, $\TD_n$ has no non-trivial inner automorphisms. This can be seen in two different ways. First, suppose the crossed intertwiner $\mathbb{I}(A,\eta)=(\phi_A,f_A,\eta)$ defined in \cref{ci-of-pairs} is an inner automorphism. Then, $\phi_A=\id_{\R^{2n}}$  (since $\R^{2n}$ is abelian) and $\eta=1$ (since  inner automorphisms are always strict, see \cref{inner-automorphisms}).
This implies  $(A,\eta)=(E_{2n},1)$, the neutral element. Since by \cref{prop:modelCI} \emph{all} automorphisms are in the essential image of $\mathbb{I}$, this shows the claim. The second argument is by a direct computation. If $x\in \R^{2n}$ is an object of $\TD_n$, then the associated inner automorphism  $i_x:\TD_n \to \TD_n$ is the identity on the level of objects, and given by $i_x((m,s),a)=(\alpha(x,(m,s)),a)=(s-[x,m],a)$ on the level of morphisms,  see  \cref{inner-automorphisms}. Now, one can check that the formula $\eta_x(a) := ((0,[x,a]),a)$ defines a natural isomorphism $\eta_x: i_x \Rightarrow \id_{\TD_n}$. 
\end{remark}

\section{Classification of the automorphism 2-group of T-duality}

\label{sec:classification}

In this section we perform computations with the 2-group $\AU[n]$ of which we know by \cref{th:auto,prop:modelCI} that it is the full automorphism 2-group $\AUT(\TD[n])$.

\subsection{Review of the classification of 2-groups}

We review the classification theory for 2-groups following Baez-Lauda \cite{baez5}, only rephrasing it in the language of  extensions of 2-groups.

For a strict 2-group $\Gamma$ we consider the group $\pi_0\Gamma$ of isomorphism classes of objects and the abelian group $\pi_1\Gamma:=\mathrm{Aut}(1) \subset \Gamma_1$. We recall that $\alpha_{\Gamma}([g], \gamma) := \id_{g} \cdot \gamma \cdot \id_{g^{-1}}$ is a well-defined action by group homomorphisms of $\pi_0\Gamma$ on $\pi_1\Gamma$.  
We regard the inclusion $i:\pi_1\Gamma \to \Gamma_1$ and the projection $p:\Gamma_0 \to \pi\Gamma_{dis}$ as 2-group homomorphisms (i.e., monoidal functors)
\begin{equation*}
B\pi_1\Gamma \to \Gamma \to \pi_0\Gamma_{dis}\text{.}
\end{equation*} 
This sequence of 2-group homomorphisms is exact in the sense of Vitale \cite{Vitale2002}, which coincides with the setting of Schommer-Pries \cite{pries2} when the latter is restricted from smooth to discrete 2-groups.

Conversely, if $G$ is a group and $A$ is an abelian Lie group, then a sequence
\begin{equation*}
BA \to \Gamma \to G_{dis}
\end{equation*}
of 2-group homomorphisms is  exact, if and only if $\Gamma_0 \to G$ is the canonical projection $\Gamma_0 \to \pi_0\Gamma$ followed by an isomorphism $\pi_0\Gamma\cong G$ and  $i$ induces an isomorphism $A \cong \pi_1\Gamma$. The formula $\alpha(g , a) := \id_{\tilde g} \cdot a \cdot \id_{\tilde g^{-1}}$, where $\tilde g\in \Gamma_0$ is any object lifting $g$, defines again an action by group homomorphisms of $G$  on $A$, which is intertwined under the isomorphisms $\pi_0\Gamma\cong G$ and $A\cong \pi_1\Gamma$ with the canonical action $\alpha_{\Gamma}$.
We recall that an extension of 2-groups is called \emph{central} when the induced action is trivial; however, in this paper we are concerned with non-central extensions.

\begin{theorem}[{\cite{baez5}}]
\label{th:class}
Let $G$ be a group, $A$ be an abelian group, and let $\alpha:G \times A \to A$ be an action of $G$ on $A$ by group homomorphisms. Then, there is a canonical bijection
\begin{equation*}
\bigset{11em}{Equivalence classes of  strict 2-group extensions\\ $BA \to \Gamma \to G_{dis}$\\ with induced action $\alpha$ } \cong \h^3(G,A^{\alpha})\text{.}
\end{equation*}
\end{theorem}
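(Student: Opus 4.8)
The plan is to realize the bijection as the assignment of a Postnikov (or $k$-)invariant to an extension, and to build an inverse by realizing cocycles. First I would fix, for a given extension $BA \to \Gamma \to G_{dis}$, a normalized set-theoretic section $s\maps G \to \Gamma_0$ of the projection $p\maps \Gamma_0 \to \pi_0\Gamma \cong G$, say with $s(1)=1$. Since $s(g_1)s(g_2)$ and $s(g_1g_2)$ have the same image $g_1g_2$ under $p$, they are isomorphic in $\Gamma$; I choose comparison morphisms $\mu(g_1,g_2)\maps s(g_1)s(g_2) \to s(g_1g_2)$, normalized so that $\mu(1,g)=\mu(g,1)=\id$. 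In a $2$-group every object $x$ has $\mathrm{Aut}(x)\cong \mathrm{Aut}(1)=\pi_1\Gamma\cong A$ via $a \mapsto \id_x\cdot a$, and under this identification the conjugation action of $\pi_0\Gamma$ is precisely $\alpha$. I then define $\omega(g_1,g_2,g_3)\in A$ to be the automorphism of $s(g_1g_2g_3)$ measuring the failure of the two bracketings to agree, i.e. the element with
\begin{equation*}
\mu(g_1g_2,g_3)\circ(\mu(g_1,g_2)\cdot\id_{s(g_3)})=\mu(g_1,g_2g_3)\circ(\id_{s(g_1)}\cdot\mu(g_2,g_3))\circ(\id_{s(g_1g_2g_3)}\cdot\omega(g_1,g_2,g_3))\text{.}
\end{equation*}

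Because the tensor product of the strict $2$-group $\Gamma$ is strictly associative, the only discrepancy in comparing the edges of the associativity pentagon for a quadruple $(g_1,g_2,g_3,g_4)$ is contributed by the $\mu$'s, and chasing it gives exactly the inhomogeneous $3$-cocycle identity
\begin{equation*}
\alpha(g_1,\omega(g_2,g_3,g_4))-\omega(g_1g_2,g_3,g_4)+\omega(g_1,g_2g_3,g_4)-\omega(g_1,g_2,g_3g_4)+\omega(g_1,g_2,g_3)=0
\end{equation*}
in $A$, where the first term arises from tensoring $\id_{s(g_1)}$ with the automorphism $\omega(g_2,g_3,g_4)$ and invoking the interchange law, which is where the module structure $\alpha$ enters. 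Thus $\omega\in Z^3(G,A^\alpha)$. Replacing the $\mu(g_1,g_2)$ by $\mu(g_1,g_2)\cdot c(g_1,g_2)$ for a normalized $2$-cochain $c\maps G\times G \to A$ alters $\omega$ by the coboundary $dc$, and a change of section $s$ can be absorbed into such a change of $\mu$; hence the class $[\omega]\in\h^3(G,A^\alpha)$ is independent of all choices. Transporting $s$ and the $\mu$'s along an equivalence of extensions produces cohomologous cocycles, so $\Gamma\mapsto[\omega]$ descends to a well-defined map on equivalence classes.

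For bijectivity I would exhibit an inverse. Given $[\omega]$, the skeletal monoidal groupoid with objects $G$, only automorphisms $\mathrm{Aut}(g)=A$, tensor product $g\otimes g'=gg'$ acting on morphisms through $\alpha$, and associator prescribed by $\omega$ is a (generally non-strict) $2$-group with $\pi_0=G$, $\pi_1=A$, induced action $\alpha$, and invariant $[\omega]$ by construction. Since every $2$-group is monoidally equivalent to a strict one (equivalently, to the $2$-group of a crossed module), strictifying this model produces a strict $2$-group $\Gamma_\omega$ fitting into an extension with the prescribed data, so the map is surjective. For injectivity, a cohomology $\omega'-\omega=dc$ furnishes, via the $2$-cochain $c$, a monoidal equivalence between the two skeletal models compatible with $BA$ and $G_{dis}$; pulling this back through the strictifications gives an equivalence of the corresponding strict extensions. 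This is the content of Baez--Lauda \cite{baez5}.

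The main obstacle is the passage between strict and weak $2$-groups: the cocycle $\omega$ is genuinely an associator, so the skeletal realization of a nonzero class is never strict, and one must invoke the strictification theorem to land inside strict $2$-groups while preserving the extension structure and the invariant. The second point requiring care is the bookkeeping of the action: one must check that the canonical identification $\mathrm{Aut}(x)\cong A$ intertwines conjugation in $\Gamma$ with $\alpha$, so that the coboundary operator appearing above is really the one computing $\h^3(G,A^\alpha)$ and not cohomology with trivial coefficients; this is exactly the non-central feature relevant to the applications in this article.
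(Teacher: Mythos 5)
Your proposal is correct and follows essentially the same route as the paper: the statement is cited from \cite{baez5}, and the forward direction is realized concretely in \cref{lem:class} by exactly your construction (a normalized section, comparison isomorphisms, a pentagon chase producing a cocycle in $\h^3(G,A^{\alpha})$ with the action entering through the interchange law, and comparison with the skeletal/special model), while the inverse via skeletal realization and strictification is the Baez--Lauda content the paper defers to the reference. The only quibble is a typechecking slip in your defining equation for $\omega$: the factor $\id_{s(g_1g_2g_3)}\cdot\omega(g_1,g_2,g_3)$ is an automorphism of $s(g_1g_2g_3)$ and so must be post-composed (placed leftmost, as in the paper's diagram), not pre-composed with a morphism whose source is $s(g_1)s(g_2)s(g_3)$.
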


Here, $A^{\alpha}$ denotes $A$ considered as a $G$-module under the action $\alpha$, and $\h^3(G,A^{\alpha})$ denotes group cohomology. 
The cohomology class corresponding to a 2-group $\Gamma$ under \cref{th:class} is called the \emph{k-invariant} of $\Gamma$.

The bijection of \cref{th:class} is obtained in the following way, see \cite[§8.3]{baez5}: in the larger bicategory of \emph{coherent 2-groups}, every \emph{strict} 2-group $\Gamma$ is isomorphic to a  \emph{special} 2-group $\Gamma_s$, and special 2-groups are easily be seen to be  classified up to isomorphism by their k-invariant in  $\h^3(G,A^{\alpha})$. We remark for completeness that \cref{th:class} holds verbatim for general \emph{coherent} 2-groups instead of just \emph{strict} ones, since every coherent 2-group is isomorphic (as a central extension) to a strict one.

We also remark that the bijection of \cref{th:class} is a bijection of pointed sets. A strict 2-group extension with k-invariant zero is equivalent  to the semi-direct product $BA \ltimes_{\alpha} G$ as defined in \cite[Def. A.8]{Nikolause}; explicitly, the underlying Lie-groupoid of $BA \ltimes_{\alpha} G$ is $BA \times G_{dis}$, and the multiplication (on the level of morphisms) is the one of the semi-direct product $A \ltimes_{\alpha} G$.

The following lemma describes a  well-known method to compute the k-invariant of a strict 2-group, for which I was, however, not able to find a reference.  
\begin{lemma}
\label{lem:class}
Suppose $BA \to \Gamma \to G_{dis}$ is a strict 2-group extension with  induced action $\alpha$. Let  $S: G \to \Gamma_0$ be a section against the projection $\Gamma_0 \to G$ such that $S(1)=1$. Then, for each pair of elements $A,B \in G$ there exists an isomorphism $\beta_{A,B}: S(A)\cdot S(B) \to S(AB)$ such that $\beta_{1,A}$ and $\beta_{A,1}$ are identities. In turn, there exists a unique 3-cocycle $\xi : G^3 \to A^{\alpha}$ such that the diagram   
\begin{equation*}
\alxydim{@C=-1em@R=3.5em}{&& S(A) \cdot S(B) \cdot S(C) \ar[drr]^-{\id_{S(A)} \cdot \beta_{B,C}} \\  S(AB) \cdot S(C)  \ar@{<-}[urr]^-{\beta_{A,B} \cdot \id_{S(C)}} \ar[dr]_{\beta_{AB,C}} &&&& S(A) \cdot S(BC) \ar[dl]^{\beta_{A,BC}} \\ & S(ABC) \ar[rr]_-{\xi_{A,B,C} \cdot \id_{S(ABC)}} &\hspace{9em}& S(ABC) }
\end{equation*}
is commutative for all $A,B,C\in G$. Then, the class of $\xi$ in $\h^3(G,A^{\alpha})$ is independent of the choices of $S$ and $\beta_{A,B}$, and it is the k-invariant of $\Gamma$.
Moreover, if $\xi =0$, then $S: G_{dis} \to \Gamma$ is a weak 2-group homomorphism, and the composite $BA \ltimes_{\alpha} G \stackrel{i \times S}\to \Gamma \times\Gamma\stackrel{m}{\to} \Gamma$ is an equivalence of 2-groups. 
\end{lemma}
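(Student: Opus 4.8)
The plan is to use the section $S$ together with the chosen isomorphisms $\beta_{A,B}$ to exhibit an isomorphism of coherent $2$-groups between $\Gamma$ and a special (skeletal) $2$-group whose associator is $\xi$, and then to read off the conclusions from \cref{th:class}. First I would produce the $\beta_{A,B}$: since $\Gamma_0 \to G$ is the projection onto $\pi_0\Gamma$, the objects $S(A)\cdot S(B)$ and $S(AB)$ both lie over $AB$ and are therefore isomorphic, so an isomorphism $\beta_{A,B}$ exists; because $S(1)=1$ is the strict monoidal unit, the morphisms $\beta_{1,A}$ and $\beta_{A,1}$ are automorphisms of $S(A)$ and may be chosen to be identities. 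Here I use the standard fact that for every object $X$ the assignment $a \mapsto \id_X\cdot a$ is a group isomorphism $\pi_1\Gamma=\mathrm{Aut}(1)\cong\mathrm{Aut}(X)$ (with inverse $\psi\mapsto\id_{X^{-1}}\cdot\psi$), which follows from the interchange law. The two composites running around the pentagon are isomorphisms $S(A)\cdot S(B)\cdot S(C)\to S(ABC)$, so their comparison is an automorphism of $S(ABC)$; transporting it through the isomorphism above defines the unique element $\xi_{A,B,C}\in A$ making the diagram commute.

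Next I would check that $\xi$ is a normalized $3$-cocycle. Running the associativity coherence on the fourfold product $S(A)\cdot S(B)\cdot S(C)\cdot S(D)$ produces the five edges of the $K_4$-pentagon, each given by the defining relation of $\xi$; pasting them and using strict associativity together with the interchange law to commute the tensorings $\id\cdot(-)$ and $(-)\cdot\id$ past one another yields
\[
\alpha(A,\xi_{B,C,D})+\xi_{A,BC,D}+\xi_{A,B,C}=\xi_{AB,C,D}+\xi_{A,B,CD}
\]
in additive notation. The action $\alpha$ enters at exactly one place: under $\mathrm{Aut}(S(ABCD))\cong A$ the morphism $\id_{S(A)}\cdot(\xi_{B,C,D}\cdot\id_{S(BCD)})$ corresponds to conjugation by $S(A)$, i.e. to $\alpha(A,-)$. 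The normalizations of $\beta$ force $\xi_{1,-,-}=\xi_{-,1,-}=\xi_{-,-,1}=1$, so $\xi\in Z^3(G,A^{\alpha})$.

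For well-definedness, replacing $\beta_{A,B}$ by $(\lambda_{A,B}\cdot\id_{S(AB)})\circ\beta_{A,B}$ for a normalized $\lambda:G^2\to A$ changes $\xi$ by the coboundary $d\lambda$, and a second section $S'$ joined to $S$ by isomorphisms $\gamma_A:S(A)\to S'(A)$ with $\gamma_1=\id$ yields a cohomologous cocycle; hence $[\xi]\in\h^3(G,A^{\alpha})$ is independent of the choices. To identify $[\xi]$ with the $k$-invariant, I would observe that $(S,\beta)$ assembles into an isomorphism of coherent $2$-groups between $\Gamma$ and the special $2$-group with object group $G$, automorphism group $A$, tensor product the law of $G$, and associator $\xi$; since by \cref{th:class} the $k$-invariant is the class of the associator of any equivalent special $2$-group, $[\xi]$ is the $k$-invariant of $\Gamma$.

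Finally, if $\xi=0$ the defining relation of $\xi$ becomes precisely the coherence axiom for a monoidal functor with multiplicator $\beta$, and with the unit normalization this says that $(S,\beta):G_{dis}\to\Gamma$ is a weak $2$-group homomorphism (the associator of $G_{dis}$ being trivial). The composite $m\circ(S\times i)$ sends the object over $A$ to $S(A)$ and a morphism $(\id,a)$ to $\id_{S(A)}\cdot i(a)$; it is monoidal as a composite of monoidal functors, essentially surjective because every object of $\Gamma$ lies over some $A$ and is thus isomorphic to $S(A)$, and fully faithful because $\mathrm{Hom}(S(A),S(A'))$ is empty for $A\neq A'$ while $\mathrm{Aut}(S(A))\cong A$ is matched bijectively by $a\mapsto\id_{S(A)}\cdot i(a)$; hence it is an equivalence of $2$-groups. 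I expect the main obstacle to be the cocycle verification: correctly pasting the five reassociations of the $K_4$-pentagon and isolating the single spot where the non-central action $\alpha$ is produced by the tensoring $\id_{S(A)}\cdot(-)$.
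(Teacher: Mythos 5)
Your proposal is correct and follows essentially the same route as the paper's proof: extract $\xi$ from the pentagon via the isomorphism $\pi_1\Gamma\cong\mathrm{Aut}(S(ABC))$, verify the cocycle identity by pasting five copies of the defining diagram (with the action $\alpha$ arising from the single tensoring $\id_{S(A)}\cdot(-)$), identify $[\xi]$ with the k-invariant by exhibiting an equivalence with the special 2-group with associator $\xi$, and read off the splitting when $\xi=0$. Your explicit coboundary argument for independence of the choices of $S$ and $\beta_{A,B}$ is a small addition the paper leaves implicit (there it follows from the intrinsic nature of the k-invariant), but it changes nothing substantive.
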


\begin{proof}
It is easy to deduce from the fact that $\Gamma$ is strict and that $S(1)=1$ that  $\beta_{A,B}$ as claimed exist.
It is then clear that an automorphism $b_{A,B,C}$ of $S(ABC)$ exists uniquely such that the diagram is commutative with $b_{A,B,C}$ at the bottom arrow.
Pasting five copies of the above diagram, we obtain an identity for $b_{A,B,C}$ when confronted with four group elements $A,B,C,D\in G$. This identity is
\begin{multline*}
(\beta_{ABC,D} \circ (b_{A,B,C} \cdot \id_{S(D)}) \circ \beta_{ABC,D} ^{\circ-1})\circ b_{A,BC,D} \circ (\beta_{A,BCD}^{\circ-1}\circ (\id_{S(A)}\cdot b_{B,C,D}) \circ \beta_{A,BCD})\\=b_{A,B,CD}\circ b_{AB,C,D}\text{.}
\end{multline*}
Here, all five main factors are automorphisms of $S(ABCD)$. Note that the automorphism group of any object is abelian (under composition), and that
$(\alpha \cdot \id_A) \circ (\beta\cdot \id_A)=(\alpha\circ\beta)\cdot \id_A$ holds for arbitrary composable morphisms $\alpha$, $\beta$, and objects $A$. Using these rules, and defining
\begin{equation*}
\xi_{A,B,C} := b_{A,B,C} \cdot \id_{S(ABC)^{-1}} \in \pi_1\Gamma\text{,}
\end{equation*}
we obtain
 \begin{multline*}
(\beta_{ABC,D} \circ (\xi_{A,B,C} \cdot \id_{S(ABC)S(D)}) \circ \beta_{ABC,D} ^{\circ-1})
\circ (\beta_{A,BCD}^{\circ-1}\circ (\id_{S(A)}\cdot \xi_{B,C,D}\cdot \id_{S(A)^{-1}}\cdot \id_{S(A)S(BCD)}) \circ \beta_{A,BCD})\\=(\xi_{A,B,CD}\circ\xi_{A,BC,D}^{-1} \circ \xi_{AB,C,D})\cdot \id_{S(ABCD)}\text{.}
\end{multline*}
In order to treat the first two factors we use the identity
\begin{equation*}
\alpha\circ (\xi \cdot \id_X)\circ\alpha^{-1}=\xi\text{,}
\end{equation*} 
which holds for all isomorphisms $\alpha:X \to 1$ and all $\xi\in \pi_1\Gamma$. It follows from the exchange law in a straightforward way. 
From this get
\begin{equation*}
\beta_{ABC,D} \circ (\xi_{A,B,C} \cdot \id_{S(ABC)S(D)}) \circ \beta_{ABC,D} ^{\circ-1}=\xi_{A,B,C}  \cdot \id_{S(ABCD)}
\end{equation*}
and
\begin{multline*}
(\beta_{A,BCD}^{\circ-1}\circ (\id_{S(A)}\cdot \xi_{B,C,D}\cdot \id_{S(A)^{-1}}\cdot \id_{S(A)S(BCD)}) \circ \beta_{A,BCD})=(\id_{S(A)}\cdot \xi_{B,C,D}\cdot \id_{S(A)^{-1}})\cdot \id_{S(ABCD)}
\end{multline*}
Summarizing, we end up with
\begin{equation*}
\xi_{A,B,C}  \cdot(\id_{S(A)}\cdot \xi_{B,C,D}\cdot \id_{S(A)^{-1}})= \xi_{A,B,CD}\circ\xi_{A,BC,D}^{-1} \circ \xi_{AB,C,D}\text{.} 
\end{equation*}
This is precisely the cocycle condition for $\xi$,
\begin{equation*}
\alpha_{\Gamma}(A, \xi_{B,C,D})-\xi _{AB,C,D}+\xi_{A,BC,D}-\xi_{A,B,CD}+\xi_{A,B,C}=0\text{.}
\end{equation*}
It is straightforward to see from the identities $\beta_{A,1}=\id$ and $\beta_{1,A}=\id$ that $\xi$ is normalized.
Now let $\Gamma_s$ be the special 2-group corresponding to $\xi$, i.e., $\Gamma_s$ is the skeletal  monoidal category with objects $G$ and morphisms $A \ltimes_{\alpha} G$,  trivial unitors and associator
$a_{A,B,C} := (\xi_{A,B,C},ABC)$.
It remains to show that $\Gamma\cong \Gamma_s$ as coherent 2-groups; this proves that $\xi$ represents the k-invariant of $\Gamma$. In order to see this, we construct a functor $F: \Gamma_s \to \Gamma$. On objects, it is given by $S$. On morphisms, we send $(\xi,A)$ to $\xi \cdot \id_{S(A)}$; this yields a functor.
Next we claim that our isomorphisms $\beta_{A,B}$ upgrade $F$ to a monoidal functor. Recall that $F(1)=1$ and that  $\beta_{A,1}$ and $\beta_{1,A}$ are identities. This means that we only have to show a single identity: 
\begin{align*}
F(a_{A,B,C}) \circ \beta_{AB,C} \circ (\beta_{A,B} \cdot \id_{S(C)}) &= \beta_{A,BC} \circ (\id_{S(A)} \cdot \beta_{B,C}) \text{.}
\end{align*}
But this identity follows immediately from the definition of $F$, since 
\begin{equation*}
F(a_{A,B,C})=F(\xi_{A,B,C} ,ABC)=\xi_{A,B,C} \cdot \id_{S(ABC)}=b_{A,B,C}\text{.}
\end{equation*}
Finally, it is easy to see that $F$ is essentially surjective and fully faithful, and hence an equivalence of categories. Thus, it is also an equivalence of monoidal categories, and thus an equivalence between $\Gamma_s$ and $\Gamma$ as coherent 2-groups. If $\xi=0$, then $\Gamma_s=BA \ltimes_{\alpha} G$, and $F$ factors as $BA \ltimes_{\alpha} G_{dis} \stackrel{i \times S}{\to} \Gamma \times \Gamma \stackrel{m}\to \Gamma$, which hence is an equivalence. 
\end{proof}

\subsection{Computation of the homotopy   groups}

\label{sec:homtopycrossedmodule}

In this section we determine the homotopy groups $\pi_1\Gamma$ and $\pi_0\Gamma$ of the strict 2-group $\AUT(\TD[n])$, and the induced action of $\pi_0\Gamma$ on $\pi_1\Gamma$. We perform all computations with the 2-group $\AU[n]$ which is isomorphic to $\AUT(\TD[n])$ by \cref{prop:modelCI,th:auto}.

\cref{re:automorphismsofautomorphisms} implies immediately:

\begin{proposition}
\label{prop:pi1}
$\pi_1(\AU[n]) \cong \Z^{2n}\text{.}$
\end{proposition}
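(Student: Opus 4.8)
The plan is to identify $\pi_1(\AU[n])$ with the automorphism group of the unit object and then invoke \cref{re:automorphismsofautomorphisms} directly. By definition $\pi_1\Gamma = \mathrm{Aut}(1)$, and the unit object of $\AU[n]$ is $(E_{2n},0)$. So I would first specialize \cref{re:automorphismsofautomorphisms} to the object $(A,\eta)=(E_{2n},0)$: an element of $\mathrm{Aut}_{\AU[n]}(E_{2n},0)$ is a smooth map $\beta:\R^{2n}\to\ueins$ which, by \cref{AU:5*} applied with $\eta=\eta'=0$, satisfies $\beta(a_1)+\beta(a_2)=\beta(a_1+a_2)$ and is therefore additive, and which by \ref{AU:4} vanishes on $\Z^{2n}$; hence $\beta$ factors through a smooth character $\T^{2n}\to\ueins$. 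Since vertical composition in $\AU[n]$ is pointwise addition, this specialization already yields an isomorphism of groups $\pi_1(\AU[n])\cong\mathrm{Hom}^{\infty}(\T^{2n},\ueins)$.

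The only remaining step is the elementary identification $\mathrm{Hom}^{\infty}(\T^{2n},\ueins)\cong\Z^{2n}$. A smooth homomorphism $\R^{2n}/\Z^{2n}\to\R/\Z$ pulls back along the projection $\R^{2n}\to\T^{2n}$ to a smooth homomorphism $\R^{2n}\to\R/\Z$, which lifts uniquely (as $\R^{2n}$ is simply connected) to an $\R$-linear map $\R^{2n}\to\R$ sending $\Z^{2n}$ into $\Z$. Evaluating such a map on the standard basis shows it has the form $x\mapsto m^{tr}x$ for a unique $m\in\Z^{2n}$, and the assignment $\beta\mapsto m$ is visibly a group isomorphism. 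Composing the two identifications gives $\pi_1(\AU[n])\cong\Z^{2n}$.

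I do not anticipate any genuine obstacle here: the statement is a direct corollary of \cref{re:automorphismsofautomorphisms}, the unit object being merely a convenient choice, and the only real input is the standard classification of smooth characters of the torus $\T^{2n}$.
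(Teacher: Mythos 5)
Your proof is correct and follows essentially the same route as the paper: the paper derives \cref{prop:pi1} immediately from \cref{re:automorphismsofautomorphisms}, which observes that \cref{AU:5*} forces any automorphism $\beta$ of an object to be additive and \ref{AU:4} forces it to descend to a smooth character of $\T^{2n}$, giving $\mathrm{Aut}_{\AU[n]}(A,\eta)\cong\mathrm{Hom}^{\infty}(\T^{2n},\ueins)\cong\Z^{2n}$. You merely spell out the standard identification $\mathrm{Hom}^{\infty}(\T^{2n},\ueins)\cong\Z^{2n}$ that the paper leaves implicit, which is fine.
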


\noindent
The computation of $\pi_0(\AU[n])$ is more difficult.

\begin{proposition}
\label{prop:pi0}
The projection 
\begin{equation*}
P:\pi_0(\AU[n]) \to \mathrm{O}^{\pm}(n,n,\Z):(A,\eta) \mapsto A
\end{equation*}
is an isomorphism of groups.
\end{proposition}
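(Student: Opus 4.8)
The plan is to check that $P$ is a well-defined group homomorphism and then to prove surjectivity and injectivity separately, with all the content lying in the latter two. Well-definedness and multiplicativity are immediate: a morphism in $\AU[n]$ between $(A,\eta)$ and $(A',\eta')$ exists only when $A=A'$, so the matrix is an isomorphism invariant and $P$ descends to $\pi_0(\AU[n])$; and since the product $(A_1,\eta_1)\cdot(A_2,\eta_2)=(A_1A_2,\tilde\eta)$ has first component $A_1A_2$, the map $P$ respects the induced group structure.

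For surjectivity I would exhibit, for each $A\in\mathrm{O}^{\pm}(n,n,\Z)$, a bilinear cocycle. Writing $[a,a']=a^{tr}Ja'$ and trying the ansatz $\eta(a,a'):=a^{tr}Ma'\bmod\Z$ for an integer matrix $M$, condition (A3) holds automatically (every bilinear form is a $2$-cocycle) and (A1) holds because $M$ is integral. Substituting the ansatz into (A2) and using $m^{tr}Ma=a^{tr}M^{tr}m$ reduces (A2) to the matrix identity $M-M^{tr}=\mathrm{iso}(A)J-A^{tr}JA=:N$. Here $N$ is integral and antisymmetric, since $N+N^{tr}=\mathrm{iso}(A)(J+J^{tr})-A^{tr}(J+J^{tr})A=\mathrm{iso}(A)I-A^{tr}IA=0$ by the defining relation of $\mathrm{O}^{\pm}(n,n,\Z)$. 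Taking $M$ to be the strictly upper-triangular part of $N$ solves $M-M^{tr}=N$ with $M$ integral, so $(A,\eta)$ is an object projecting onto $A$.

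For injectivity I must show that two objects $(A,\eta)$ and $(A,\eta')$ sharing the same matrix are isomorphic. I would set $\delta:=\eta'-\eta$; subtracting the two instances of each axiom, $\delta$ is a smooth normalized $2$-cocycle by (A3) that vanishes on $\Z^{2n}\times\Z^{2n}$ by (A1) and satisfies $\delta(m,a)=\delta(a,m)$ for $m\in\Z^{2n}$ (the bracket terms of (A2) cancel). The goal is a smooth $\beta$ with $\beta|_{\Z^{2n}}=0$ and $\delta(a,a')=\beta(a)+\beta(a')-\beta(a+a')$, which is exactly (A4)--(A5). Using the smooth cohomology of the vector group $\R^{2n}$ -- lift $\delta$ to an $\R$-valued smooth cocycle (the domain $\R^{4n}$ is simply connected) and invoke $\h^2_{\mathrm{sm}}(\R^{2n},\R)\cong\Lambda^2(\R^{2n})^{*}$ -- one obtains a smooth $\gamma\colon\R^{2n}\to\ueins$ and an antisymmetric $W$ with $\delta(a,a')=a^{tr}Wa'+\gamma(a)+\gamma(a')-\gamma(a+a')$. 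The commutator $\delta(a,a')-\delta(a',a)=2a^{tr}Wa'$ vanishes whenever the first argument is integral, forcing $2m^{tr}Wa\in\Z$ for all $a$ and $m\in\Z^{2n}$, hence $W=0$. Then $\delta$ is the coboundary of $\gamma$; the vanishing (A1) shows $\chi:=\gamma|_{\Z^{2n}}$ is a homomorphism $\Z^{2n}\to\ueins$, and extending $\chi$ to a smooth character $\psi(a)=v^{tr}a$ and setting $\beta:=\gamma-\psi$ gives the required coboundary with $\beta|_{\Z^{2n}}=0$.

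The genuinely substantial step, and the main obstacle, is this final cohomological vanishing: once one knows that a smooth $\ueins$-valued $2$-cocycle on $\R^{2n}$ is, up to coboundary, an antisymmetric bilinear form, the integrality and symmetry constraints finish the argument cheaply. If one prefers to avoid quoting $\h^2_{\mathrm{sm}}(\R^{2n},\R)$, the same conclusion is reachable by hand: the constraint $\delta(m,a)=\delta(a,m)$ forces the (bi-additive, hence $\R$-bilinear) commutator pairing of $\delta$ to vanish, so $\delta$ is symmetric, and a symmetric smooth $\ueins$-cocycle on $\R^{2n}$ is a coboundary because the associated abelian Lie-group extension $0\to\ueins\to E\to\R^{2n}\to 0$ splits smoothly ($E$ is a connected abelian Lie group with $E\cong\ueins\times\R^{2n}$).
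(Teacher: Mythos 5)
Your proposal is correct, and it splits the same way the paper does (well-definedness trivial, surjectivity via an explicit bilinear section, injectivity via a cohomological vanishing), but the injectivity half follows a genuinely different route. For surjectivity you reproduce the paper's \cref{lem:section} almost verbatim: the section $S(A)=(A,\eta_A)$ there is built from a triangular half of $\mathcal{B}(A)=\mathrm{iso}(A)J-A^{tr}JA$ (the paper takes the lower-triangular part where you take the strictly upper-triangular one; both solve $M-M^{tr}=\mathcal{B}(A)$ integrally, so this is only a convention). For injectivity the paper packages the difference cocycle $\eta=\eta_2-\eta_1$ \emph{together with} its restriction $\tilde g=\eta|_{\R^{2n}\times\Z^{2n}}$ into a Segal--Mitchison/Brylinski cocycle for a central Lie group extension of $\T^{2n}$ by $\ueins$, and kills it by quoting $\h^2_{sg}(\T^{2n},\ueins)\cong\h^3(B\T^{2n},\Z)=0$; the condition $\beta|_{\Z^{2n}}=0$ then falls out of the trivialization. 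You instead stay on the universal cover $\R^{2n}$: the integrality constraint coming from (A2) forces the commutator pairing of $\delta$ to vanish, so $\delta$ is a symmetric cocycle; the associated connected abelian extension $0\to\ueins\to E\to\R^{2n}\to 0$ splits smoothly (equivalently, $\h^2_{sm}(\R^{2n},\R)\cong\Lambda^2(\R^{2n})^{*}$ leaves only an antisymmetric obstruction, which integrality kills); and you restore the normalization on $\Z^{2n}$ by subtracting a linear character. Your version is more self-contained, needing only the structure theory of connected abelian Lie groups (or van Est for the vector group) rather than the smooth group cohomology of the torus, at the cost of handling the $\Z^{2n}$-periodicity by hand at the end; the paper's descent to $\T^{2n}$ absorbs the periodicity and the symmetry obstruction in one stroke. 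The only point worth spelling out in your first route is that the integer-valued defect of the lifted $\R$-valued cocycle identity is continuous on a connected domain and vanishes at the origin, hence vanishes identically, so the lift really is a cocycle.
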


It is clear that the map in \cref{prop:pi0} is well-defined and a group homomorphism. We prove in the following that it is injective, and show in \cref{lem:section} below that it is surjective.

\begin{proof}[Proof of injectivity in \cref{prop:pi0}]
We show injectivity by considering an object  in the kernel, i.e., one of the form $(0,\eta)$, and constructing an isomorphism $(0,\eta)\cong(0,0)$  in $\AU[n]$.  Such object $(0,\eta)$ is a smooth map $\eta: \R^{2n} \times \R^{2n} \to \ueins$ satisfying  \cref{AU:1,AU:3} as well as 
\begin{enumerate}[({A}2'),leftmargin=*]

\item
\label{AU:2'}
$\eta(m,a) =\eta(a,m)$ for all $a\in \R^{2n}$ and $m\in \Z^{2n}$.

\end{enumerate}
We want to relate $\eta$ to a central extension of $\T^{2n}$ by $\ueins$ as Lie groups. Such extensions, 
\begin{equation*}
1 \to \ueins \to A \to \T^{2n} \to 1\text{,}
\end{equation*}
are classified by the smooth group cohomology $\h^2_{sg}(\T^{2n},\ueins)$ of Brylinski \cite{brylinski3} and Segal-Mitchison \cite{segal4}. A cocycle in $\h^2_{sg}(\T^{2n},\ueins)$ w.r.t. to the covering $\R^{2n} \to \T^{2n}$ consists of a pair $(g,\gamma)$ in which $g:\R^{2n} \times_{\T^{2n}} \R^{2n} \to \ueins$ is a \v Cech 2-cocycle, and $
\gamma: \R^{2n} \times \R^{2n} \to \ueins$ is a group 2-cocycle, and the condition
\begin{equation}
\label{eq:compggamma}
g(a_1,a_1')+ g(a_2,a_2')+ \gamma(a_1',a_2') = \gamma(a_1,a_2)+ g(a_1+a_2,a_1'+a_2')
\end{equation}
is satisfied for all $((a_1,a_2),(a_1',a_2')) \in (\R^{2n} \times \R^{2n}) \times_{\T^{2n} \times \T^{2n}} (\R^{2n} \times \R^{2n})$.   

We shall translate fibre products of $\R^{2n} \to \T^{2n}$ under the diffeomorphism $\R^{2n} \times_{\T^{2n}} \R^{2n} \cong \R^{2n} \times \Z^{2n}$ given by $(a,a')\mapsto (a,a'-a)$.
Then, we consider instead of the \v Cech 2-cocycle $g$ a smooth map $\tilde g: \R^{2n} \times \Z^{2n} \to \ueins$ satisfying
\begin{equation}
\label{eq:eqcech}
\tilde g(a,m)+\tilde g(a+m,m')=\tilde g(a,m+m')\text{,}
\end{equation}  
and instead of  \cref{eq:compggamma} we obtain the equation
\begin{equation}
\label{eq:compggammaZ}
\tilde g(a_1,m_1)+ \tilde g(a_2,m_2)+ \gamma(a_1+m_1,a_2+m_2) = \gamma(a_1,a_2)+ \tilde g(a_1+a_2,m_1+m_2)
\end{equation}
for all $a_1,a_2\in \R^{2n}$ and $m_1,m_2\in \Z^{2n}$. 
Returning to our map $\eta:\R^{2n} \times \R^{2n} \to \ueins$, we set
\begin{equation}
\label{def-extension}
\gamma := \eta
\quand
\tilde g := \eta|_{\R^{2n} \times \Z^{2n}}\text{.}
\end{equation}
Then, $\gamma$ is a group 2-cocycle due to \cref{AU:3}, and $\tilde g$ satisfies \cref{eq:eqcech}:
\begin{multline*}
\tilde g(a,m+m') =\eta(a,m+m')
\eqcref{AU:3}\eta(a,m)+\eta(a+m,m')-\eta(m,m')
\\\eqcref{AU:1} \eta(a,m)+\eta(a+m,m')
= \tilde g (a,m) + \tilde g(a+m,m)\text{.} 
\end{multline*}
Finally, condition \cref{eq:compggammaZ} is  satisfied:
\begin{align*}
&\mquad\tilde g(a_1,m_1)+ \tilde g(a_2,m_2)+ \gamma(a_1+m_1,a_2+m_2)
\\&=\eta(a_1,m_1)+ \eta(a_2,m_2)+ \eta(m_1+a_1,a_2+m_2)
\\&\hspace{-0.5em}\eqcref{AU:3} \eta(a_1,m_1)+ \eta(a_2,m_2)-\eta(m_1,a_1)+\eta(m_1,a_1+a_2+m_2)+\eta(a_1,a_2+m_2)
\\&\hspace{-0.5em}\eqcref{AU:2'}  \eta(a_2,m_2)+\eta(m_1,m_2+a_1+a_2)+\eta(a_1,a_2+m_2)
\\&\hspace{-0.5em}\eqcref{AU:3}  \eta(a_2,m_2)+\eta(m_1,m_2)+\eta(m_1+m_2,a_1+a_2)-\eta(m_2,a_1+a_2)+\eta(a_1,a_2+m_2)
\\&\hspace{-0.5em}\eqcref{AU:1}\eta(a_2,m_2)-\eta(m_2,a_1+a_2)+\eta(a_1,a_2+m_2)+\eta(m_1+m_2,a_1+a_2)
\\&\hspace{-0.5em}\eqcref{AU:2'}\eta(a_2,m_2)-\eta(a_1+a_2,m_2)+\eta(a_1,a_2+m_2)+\eta(a_1+a_2,m_1+m_2)
\\&\hspace{-0.5em}\eqcref{AU:3} \eta(a_1,a_2)+\eta(a_1+a_1,m_1+m_2)
\\&=\gamma(a_1,a_2)+ \tilde g(a_1+a_2,m_1+m_2)\text{.}
\end{align*}
We conclude that $\eta$ defines, via \cref{def-extension}, a central Lie group extension of $\T^{2n}$ by $\ueins$. 
Now we infer that all such extensions are trivial: this comes from the fact that  $\h^2_{sg}(\T^{2n},\ueins)\cong \h^3(B\T^{2n},\Z)$ by a result of Schommer-Pries   \cite{pries2},  whereas the cohomology of $B\T^{2n}$ is free on even generators. Thus, any cocycle $(\gamma,\tilde g)$ trivializes, which means that there exists a smooth map $\beta: \R^{2n} \to \ueins$ such that 
\begin{equation}
\label{trivializing-chain}
g(a,a')+\beta(a)=\beta(a')
\quand
\gamma(a_1,a_2)=\beta(a_1+a_2)-\beta(a_1)-\beta(a_2)
\end{equation}
hold for all $a,a',a_1,a_2\in \R^{2n}$. In case  of our cocycle given by  \cref{def-extension}, we show that any trivializing map $\beta$ establishes a morphism $(0,\eta) \cong (0,0)$ in $\AU[n]$. Indeed, \cref{trivializing-chain} implies via  \cref{AU:1} that 
\begin{equation*}
\beta(m)=\beta(m')\quand \beta(m_1+m_2)=\beta(m_1)+\beta(m_2)
\end{equation*}
hold for all $m,m'\in \Z^{2n}$,
i.e., the restriction of $\beta$ to $\Z^{2n}$ is a constant group homomorphism. Thus, $\beta|_{\Z^{2n}}=0$; this shows \cref{AU:4}.  The second equation in \cref{trivializing-chain} provides \cref{AU:5}.
\end{proof}

To proceed, we define a section
\begin{equation*}
S: \mathrm{O}^{\pm}(n,n,\Z) \to \mathbb{A}_{n,0}
\end{equation*}
against the projection of \cref{prop:pi0}.
 We consider  the matrix 
\begin{equation*}
J=\begin{pmatrix}
0 & 0 \\
E_n & 0 \\
\end{pmatrix}
\end{equation*}
from the definition of the 2-group $\TD[n]$, whose corresponding bilinear form is  $[-,-]$, and define, for  $A\in \mathrm{O}^{\pm}(n,n,\Z)$, the matrix
\begin{equation*}
\B A := \mathrm{iso}(A) J - A^{tr}J A \in \Z^{2n \times 2n}\text{.}
\end{equation*}
Note that $\B A$ is the matrix of the bilinear form
\begin{equation}
\label{eq:BA}
\braket{a}{\B A}{b} =\mathrm{iso}(A)[a,b]-[Aa,Ab]\text{.}
\end{equation}
It is easy to see that $\B A$ is skew-symmetric.
If $B \in \Z^{2n \times 2n}$ is any skew-symmetric matrix, then we denote by $B_{low} \in \Z^{2n \times 2n}$ the lower triangular matrix with zeroes on the diagonal, satisfying $B=B_{low}-B_{low}^{tr}$. We will then write
$\lbraket - B -$ for the bilinear form corresponding to the matrix $B_{low}$.
We define a bilinear map $\eta_A: \R^{2n} \times \R^{2n} \to \ueins$ by
\begin{equation}
\label{def:etaA}
\eta_A(a,a') :=  \lbraket a{\B A}{a'}\text{.}
\end{equation}

\begin{lemma}
\label{lem:section}
For any $A\in \mathrm{O}^{\pm}(n,n,\Z)$, the pair $(A,\eta_A)$ is an object in $\AU[n]$. In particular, the map 
\begin{equation*}
S: \mathrm{O}^{\pm}(n,n,\Z) \to \AU[n]: A \mapsto (A,\eta_A)
\end{equation*}
is a section against the projection $P$, and $P$ is surjective. 
\end{lemma}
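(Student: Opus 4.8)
The plan is to verify directly that the pair $(A,\eta_A)$ satisfies the three defining axioms \cref{AU:1}, \cref{AU:2}, \cref{AU:3} of an object of $\mathbb{A}_n^{\pm}$; once this is done the ``in particular'' clause is immediate. Throughout I would abbreviate $L := (\B A)_{low}$ for the integral lower-triangular matrix representing the bilinear form in \cref{def:etaA}, so that $\eta_A(a,a')=a^{tr}La'$ read modulo $\Z$; note that this map is smooth, being a bilinear form followed by the projection $\R \to \ueins$.

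Two of the three axioms should fall out essentially for free. For \cref{AU:1}, since $L$ has integer entries and $m,m'\in\Z^{2n}$, the number $m^{tr}Lm'$ is an integer, hence $\eta_A(m,m')=0$ in $\ueins=\R/\Z$. For \cref{AU:3} I would simply observe that \emph{any} bilinear form $b(a,a')=a^{tr}La'$ is a group $2$-cocycle: expanding both sides of $b(a,a')+b(a+a',a'')=b(a',a'')+b(a,a'+a'')$ by bilinearity reduces the identity to the tautology $a^{tr}La'+a^{tr}La''+a'^{tr}La''=a'^{tr}La''+a^{tr}La'+a^{tr}La''$.

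The real content is \cref{AU:2}, and this is the step I expect to be the crux. The idea is to compute the antisymmetric part of $\eta_A$ on the relevant arguments: since the scalar $a^{tr}Lm$ equals its transpose $m^{tr}L^{tr}a$, one obtains
\begin{equation*}
\eta_A(m,a)-\eta_A(a,m)=m^{tr}(L-L^{tr})a=\braket{m}{\B A}{a}\text{,}
\end{equation*}
where the last equality is exactly the defining property $L-L^{tr}=\B A$ of the lower-triangular representative. This is precisely why the construction uses $(\B A)_{low}$ rather than $\B A$ itself: the skew-symmetrization must reproduce $\B A$, not $2\B A$. Feeding in \cref{eq:BA} and rearranging, \cref{AU:2} becomes the assertion $\mathrm{iso}(A)([m,a]+[a,m])=[Am,Aa]+[Aa,Am]$, that is $\mathrm{iso}(A)\braket{m}{I}{a}=\braket{Am}{I}{Aa}=m^{tr}A^{tr}IAa$, using that $[x,y]+[y,x]=\braket{x}{I}{y}$ for the symmetric form $I=J+J^{tr}$. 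This holds for all $m,a$ exactly because $A\in\mathrm{O}^{\pm}(n,n,\Z)$ means $A^{tr}IA=\mathrm{iso}(A)I$; so the whole of \cref{AU:2} is just an avatar of the pseudo-orthogonality condition -- the same condition already used above to see that $\B A$ is skew-symmetric.

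Having verified the three axioms, $(A,\eta_A)$ is an object of $\mathbb{A}_n^{\pm}$, so $S$ is well defined. Since the projection $P$ sends the class of $(A,\eta_A)$ to $A$, we have $P\circ S=\id$ on $\mathrm{O}^{\pm}(n,n,\Z)$; hence $S$ is a section against $P$ and $P$ is surjective. I do not anticipate any obstacle in this final bookkeeping step: the only genuine computation is the one axiom \cref{AU:2}, whose verification collapses entirely onto the defining relation of $\mathrm{O}^{\pm}(n,n,\Z)$.
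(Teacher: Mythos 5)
Your proposal is correct and follows essentially the same route as the paper: axioms (A1) and (A3) are immediate from integrality and bilinearity, and (A2) reduces via the skew-symmetrization $L-L^{tr}=\B A$ to the pseudo-orthogonality relation $A^{tr}IA=\mathrm{iso}(A)I$. The only cosmetic difference is that the paper invokes the already-noted skew-symmetry of $\B A$ to finish the computation in one line, whereas you unwind that skew-symmetry back to the defining relation of $\mathrm{O}^{\pm}(n,n,\Z)$ explicitly.
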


\begin{proof}
Every bilinear form automatically satisfies the cocycle condition \cref{AU:3*}, and \cref{AU:1*} is also clear from the definition of $\eta_A$. It remains to show \ref{AU:2}:
\begin{multline*}
\eta_A(m,a) -\eta_A(a,m) 
=  \lbraket{m}{\B A}{a}-\lbraket{a}{\B A}{m}
\\= \braket{m}{\B A}{a}
=- \braket{a}{\B A}{m}=-\mathrm{iso}(A) [a,m]+[Aa,Am]\text{.}\qedhere
\end{multline*}
\end{proof}

\begin{example}
We describe $\eta_A$ for particular $A\in \mathrm{O}^{\pm}(n,n,\Z)$.
\begin{enumerate}[(a)]

\item
\label{ex:split:a}
For $A=E_{2n}$ we get $\B {E_{2n}} = 0$ and thus $\eta_{E_{2n}}=0$.

\item 
\label{ex:split:b}
For $n=1$ and 
\begin{equation*}
A = 
\begin{pmatrix}
a & b \\
c & d \\
\end{pmatrix}
\end{equation*}
we have (see \cref{ex:n=1})
\begin{equation*}
\B A = \begin{pmatrix}
0 & -cb \\
bc & 0 \\
\end{pmatrix}\text{.}
\end{equation*}
Thus, $\eta_A(x\oplus \hat x,x'\oplus \hat x') = bc\hat xx'$. 

\item
\label{ex:split:c}
For $I\in \mathrm{O}(n,n,\Z)$ we have 
\begin{equation*}
\B I = J - IJ I =J-J^{tr} = \begin{pmatrix}0 & -E_n \\
E_n & 0 \\
\end{pmatrix}\text{.}
\end{equation*}
Hence, $\B I_{low}=J$ and thus $\eta_I(a,a')=\lbraket{a}{\B I}{a'}=\braket {a}{J}{a'}=[a,a']$. 

\item
For $V_{i} \in \mathrm{O}(n,n,\Z)$ we have
\begin{equation*}
\B {V_{i}}=  J - V_{i,i+n}J V_{i,i+n}  = E_{i+n,i}-E_{i,i+n}\text{.}
\end{equation*}
where $E_{ij}$ is the $(2n\times 2n)$-matrix with a $1$ at the $(i,j)$-position and zeros otherwise. Hence, $\B {V_{i}}_{low}=E_{i+n,i}$ and $\eta_{V_{i}}(a,a') =a_{i+n}a'_i$. 

\item
\label{ex:split:d}
We consider $A \in \mathrm{GL}(n,\Z)$ with corresponding
\begin{equation*}
D_A=\begin{pmatrix}
A & 0 \\
0 & (A^{tr})^{-1} \\
\end{pmatrix}\in \mathrm{O}(n,n,\Z)\text{,}
\end{equation*}
 see \cref{sec:onnZ}. We get $\B {D_A}=0$  and hence $\eta_{D_A} = 0$.

\item
\label{ex:split:e}
Consider $B\in \mathfrak{so}(n,\Z)$ and the corresponding element $e^{B} \in \mathrm{O}(n,n,\Z)$. We obtain
\begin{equation*}
\B {e^{B}} =\begin{pmatrix}
B & 0 \\
0 & 0 \\
\end{pmatrix} 
\end{equation*}
and hence $\eta_{e^{B}}(a \oplus b,a' \oplus b') = \lbraket{a}{B}{a'}$. 

\item
\label{ex:split:g}
Consider the elements $K_{\alpha,\hat\alpha}$ of the Klein 4-group $K\subset \mathrm{O}^{\pm}(n,n,\Z)$. We obtain $\B {K_{\alpha,\hat\alpha}}=0$
and hence $\eta_{K_{\alpha,\hat\alpha}}=0$. 

\end{enumerate}
\end{example}

Now that we have computed the homotopy groups of $\AU[n]$, it remains to compute the action. 

\begin{lemma}
\label{lem:action}
The action of $A\in \mathrm{O}^{\pm}(n,n,\Z)$ on $m\in \Z^{2n}$ is given by 
\begin{equation*}
(A,m) \mapsto IAI m  \text{.}
\end{equation*} 
\end{lemma}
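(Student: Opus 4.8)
The lemma asserts that the action of $\pi_0(\AU[n]) \cong \mathrm{O}^{\pm}(n,n,\Z)$ on $\pi_1(\AU[n]) \cong \Z^{2n}$ is given by $(A,m) \mapsto IAIm$, where $I$ is the standard symmetric form matrix. Let me recall the setup.

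We have $\pi_1(\AU[n]) = \mathrm{Aut}_{\AU[n]}(A,\eta)$. By Remark (re:automorphismsofautomorphisms), an automorphism $\beta$ of an object is a group homomorphism $\T^{2n} \to \ueins$, identified with $\Z^{2n}$. The identification: a homomorphism $\T^{2n} \to \ueins$ corresponds to $m \in \Z^{2n}$ via $\beta(a) = [\text{something involving } m \text{ and } a]$. I need to pin down the precise identification $\mathrm{Hom}^\infty(\T^{2n},\ueins) \cong \Z^{2n}$ being used. A character of $\T^{2n} = \R^{2n}/\Z^{2n}$ is $\beta_m(a) = \langle m, a\rangle = m^{tr}a$ (the standard pairing, mod $\Z$), for $m \in \Z^{2n}$.

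**The action formula.** From Section sec:classification, the action of $\pi_0\Gamma$ on $\pi_1\Gamma$ is $\alpha_\Gamma([g],\gamma) := \id_g \cdot \gamma \cdot \id_{g^{-1}}$, using horizontal (product) composition. So I must compute, for an object $g = (A,\eta)$ and an automorphism $\gamma = \beta_m$ of the unit, the conjugate $\id_{(A,\eta)} \cdot \beta_m \cdot \id_{(A,\eta)^{-1}}$.

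**Computing with horizontal composition.** The horizontal composition in $\AU[n]$ is $(\beta_1 \cdot \beta_2)(a) = \beta_1(A_2 a) + \mathrm{iso}(A_1)\beta_2(a)$. An identity morphism $\id_{(A,\eta)}$ is the zero map $\beta = 0$ as a morphism $(A,\eta)\to(A,\eta)$. So I would compute the triple horizontal product $\id_{(A,\eta)} \cdot \beta_m \cdot \id_{(A^{-1},\eta^{-1})}$, where $\beta_m$ is an automorphism of the unit object $(E_{2n},0)$. Let me denote the three morphisms with underlying matrices $A$, $E_{2n}$, $A^{-1}$ respectively. Applying the horizontal composition formula (which is associative), the result evaluated at $a$ should collapse to $\mathrm{iso}(A)\cdot\beta_m(A^{-1}a)$ since the identities contribute zero but shift the argument and multiply by signs. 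Concretely, $\beta_m(A^{-1}a)$ prefixed by the left identity at matrix $A$ gives $\beta_m(A^{-1}\cdot(\text{arg}))$ evaluated appropriately, and $\mathrm{iso}(A)$ appears as the sign factor. So the resulting character is $a \mapsto \mathrm{iso}(A)\, m^{tr} A^{-1} a = (\mathrm{iso}(A)(A^{-1})^{tr} m)^{tr} a$, i.e. corresponds to the integer vector $\mathrm{iso}(A)(A^{tr})^{-1}m$.

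**The key identity.** It remains to check that $\mathrm{iso}(A)(A^{tr})^{-1} = IAI$ for $A \in \mathrm{O}^\pm(n,n,\Z)$. This follows directly from the defining relation $A^{tr}IA = \mathrm{iso}(A)I$ (Definition eq:defskewiso). Since $I^2 = E_{2n}$, we have $IA^{tr}IA = \mathrm{iso}(A)E_{2n}$, hence $(A^{tr})^{-1} = \mathrm{iso}(A)^{-1}IAI = \mathrm{iso}(A)IAI$ (using $\mathrm{iso}(A)^2=1$), so $\mathrm{iso}(A)(A^{tr})^{-1} = IAI$. This gives the claimed formula $(A,m)\mapsto IAIm$.

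**Main obstacle.** The genuine difficulty is bookkeeping, not mathematics: I must fix the precise identification $\mathrm{Hom}^\infty(\T^{2n},\ueins)\cong\Z^{2n}$ (is it $m \mapsto (a\mapsto m^{tr}a)$ or with a transpose/sign?) so that the conjugation computation lands exactly on $IAIm$ rather than on $I A^{tr} I m$ or a signed variant. I would verify the convention by testing against the known examples: for $A = I$, since $\mathrm{iso}(I)=1$ and $I^3 = I$, the formula gives $m \mapsto I\cdot I\cdot I\, m = Im$, matching the flip swapping the two $\Z^n$-factors — consistent with Example ex:flip where $flip$ exchanges the legs. This sanity check fixes any sign ambiguity and confirms the identification.
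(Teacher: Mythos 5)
Your proposal is correct and follows essentially the same route as the paper: compute $\id_{(A,\eta)}\cdot\beta_m\cdot\id_{(A,\eta)^{-1}}$ via the horizontal composition formula to get $\mathrm{iso}(A)(\beta_m\circ A^{-1})$, then use the defining relation $A^{tr}IA=\mathrm{iso}(A)I$ to rewrite the resulting character as $\beta_{IAIm}$ (the paper states the key identity as $\mathrm{iso}(A)A^{-1}=IA^{tr}I$, the transpose of your $\mathrm{iso}(A)(A^{tr})^{-1}=IAI$, but these are equivalent). Your convention for $\mathrm{Hom}^{\infty}(\T^{2n},\ueins)\cong\Z^{2n}$, namely $\beta_m(a)=m^{tr}a$, is exactly the one the paper uses, so the bookkeeping worry resolves as you anticipated.
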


\begin{proof}
Let $\beta$ be an automorphism of $(E_{2n},0)$, i.e. $\beta:\T^{2n} \to \ueins$ is a Lie group homomorphism. The action of an object $(A,\eta)$ on $\beta$ is given by
\begin{align*}
((A,\eta),\beta)\mapsto& \id_{(A,\eta)} \cdot \beta \cdot \id_{(A,\eta)^{-1}}
=  \mathrm{iso}(A)\beta \cdot \id_{(A^{-1},\eta^{-1})}
=  \mathrm{iso}(A) (\beta\circ A^{-1}) 
\text{.}
\end{align*}
We recall that the isomorphism $\Z^{2n} \cong \mathrm{Hom}^{\infty}(\T^{n},\ueins)$ is given by $m \mapsto \beta_m$ with $\beta_m:\T^{2n} \to \ueins$ defined by $\beta_m(a)=ma$ (standard scalar product mod $\Z$). Note that \cref{eq:defskewiso}  implies
$\mathrm{iso}(A)A^{-1} = IA^{tr}I$. 
From this we get
\begin{multline*}
\mathrm{iso}(A)(\beta_m \circ A^{-1}) (a)=\beta_m(\mathrm{iso}(A)A^{-1}a)=\beta_m(IA^{tr}Ia)=m^{tr}IA^{tr}Ia=(IAIm)^{tr} a= \beta_{IAIm}(a)\text{.}
\end{multline*}
This shows the claim.
\end{proof}

We may summarize the results of this section as follows.

\begin{theorem}
\label{th:extension}
The 2-group  $\AU[n]$ is a non-central extension
\begin{equation*}
1 \to \Z^{2n} \to \AU[n] \to \mathrm{O}^{\pm}(n,n,\Z)_{dis} \to 1\text{,} 
\end{equation*}
whose induced action of $\mathrm{O}^{\pm}(n,n,\Z)$ on $\Z^{2n}$ is $(A,m) \mapsto IAIm$.
\end{theorem}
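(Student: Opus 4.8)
The plan is to read off \cref{th:extension} directly from the three preceding results, since the genuine computational work has already been done; only the assembly and the non-centrality claim remain. First I would recall from the review at the start of \cref{sec:classification} that any strict $2$-group $\Gamma$ sits canonically in an exact sequence $B\pi_1\Gamma \to \Gamma \to (\pi_0\Gamma)_{dis}$ of $2$-group homomorphisms, with $\pi_0\Gamma$ acting on the abelian group $\pi_1\Gamma$ via $\alpha_{\Gamma}$. Applying this to $\Gamma = \AU[n]$ produces the desired sequence abstractly; it then remains only to identify the two homotopy groups and the action with the concrete data in the statement.

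Second I would substitute the known identifications. By \cref{prop:pi1} we have $\pi_1(\AU[n]) \cong \Z^{2n}$, and by \cref{prop:pi0} the projection $P$ is an isomorphism $\pi_0(\AU[n]) \cong \mathrm{O}^{\pm}(n,n,\Z)$. Transporting the canonical action $\alpha_{\Gamma}$ across these two isomorphisms, \cref{lem:action} identifies it with $(A,m) \mapsto IAIm$. At this point the exact sequence of the statement, together with its action, is established. Here I would take care to verify that \cref{lem:action} is indeed computing $\alpha_{\Gamma}$ under precisely these identifications, so that no transpose or sign is lost in translation.

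The only genuinely new point is the word \emph{non-central}, i.e. that the induced action is nontrivial. Here I would argue that $A \mapsto IAI$ is conjugation by the involution $I$ (recall $I^2 = E_{2n}$), hence an automorphism of $\mathrm{GL}(2n,\Z)$ that sends $E_{2n}$ to $E_{2n}$ and nothing else to $E_{2n}$; since the defining representation of $\mathrm{GL}(2n,\Z)$ on $\Z^{2n}$ is faithful, every $A \neq E_{2n}$ acts nontrivially. Concretely, taking $A = I$ one has $IAI = I^{3} = I$, and $I$ swaps the two coordinate blocks, so the first basis vector is sent to the $(n+1)$-st; this single witness already shows the action is nontrivial and the extension non-central.

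I expect no real obstacle: the substance lies entirely in \cref{prop:pi0,prop:pi1,lem:action}, and this theorem merely packages them into the extension language of \cref{sec:classification}. The one place demanding care is the bookkeeping—checking that the identifications $\pi_1 \cong \Z^{2n}$ and $\pi_0 \cong \mathrm{O}^{\pm}(n,n,\Z)$ are mutually compatible with the action formula of \cref{lem:action}, rather than differing by some automorphism of $\Z^{2n}$ or of the acting group.
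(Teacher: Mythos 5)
Your proposal is correct and takes essentially the same route as the paper, which states \cref{th:extension} as a direct summary of \cref{prop:pi0,prop:pi1,lem:action} together with the exact sequence $B\pi_1\Gamma \to \Gamma \to (\pi_0\Gamma)_{dis}$ recalled at the start of \cref{sec:classification}. Your explicit witness for non-centrality (the action of $I$ itself, using $I^2=E_{2n}$) is a detail the paper leaves implicit, but it is the obvious and intended one.
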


\begin{remark}
We note that $A \mapsto IAI$ is an (inner) automorphism of $\mathrm{O}^{\pm}(n,n,\Z)$. One could thus change the projection $p:\mathbb{A}^{\pm}_{n} \to \mathrm{O}^{\pm}(n,n,\Z)$ by this automorphism, and thereby achieve  that the action in \cref{lem:action} is plain matrix multiplication. 
\end{remark}

\subsection{Computation of the k-invariant}

\label{sec:splitting:multiplicativity}

We will use the map $S: \mathrm{O}^{\pm}(n,n,\Z) \to \AU[n,0]$  from \cref{lem:section} to determine the k-invariant of $\AU[n]$ via \cref{lem:class}.
Note that $S$ satisfies $S(1)=1$ by \cref{ex:split:a}.  The next step is find isomorphisms
\begin{equation*}
\beta_{A,B}: S(A) \cdot S(B) \to S(AB)
\end{equation*} 
in $\AU[n]$, for any $A,B \in \mathrm{O}^{\pm}(n,n,\Z)$. 
We have $S(AB)=(AB,\eta_{AB})$ and write $S(A) \cdot S(B) =: (AB,\tilde\eta)$, with 
\begin{align*}
\tilde\eta(a,a') 
&=\eta_A(Ba,Ba')+\mathrm{iso}(A)\cdot  \eta_B(a,a')
\\&= \braket{a}{B^{tr}\B A_{low}B}{a'}+\mathrm{iso}(A)\cdot \braket{a}{\B B_{low}}{a'}
\\&= \braket{a}{\X AB}{a'}\text{,}
\end{align*}
where we have defined 
\begin{equation*}
\X AB :=B^{tr}\B A_{low}B +\mathrm{iso}(A) \B B_{low} \in \Z^{2n \times 2n}
\end{equation*} 
as the matrix of $\tilde\eta.$
In order to find $\beta_{A,B}$, we need to compare the bilinear forms $\eta_{AB}$ and $\tilde\eta$, and thus their matrices $\B{AB}_{low}$ and $\X AB$.

In general, $\X AB \neq \B {AB}_{low}$ and hence $\eta_{AB}\neq \tilde\eta$. In order to compare the matrices $\X AB$ and $\B {AB}_{low}$ we first claim that
\begin{equation}
\label{eq:cocycleB}
\B {AB} = B^{tr}\B AB+\mathrm{iso}(A)\B B\text{,}
\end{equation}
which can easily be checked from the definition of $\B A$.
From this we obtain that
\begin{equation*}
\B {AB}=\X AB-\X AB^{tr}\text{.}
\end{equation*}
Since $\B {AB}_{low}-\B {AB}_{low}^{tr}=\B {AB}=\X AB-\X AB^{tr}$, the difference \begin{equation}
\label{eq:HABdef}
H_{A,B} :=\X AB-\B {AB}_{low}\in \Z^{2n \times 2n}
\end{equation}
is a symmetric matrix. An explicit formula for this matrix is
\begin{equation*}
H_{A,B} = B^{tr}\B A_{low}B-(B^{tr}\B AB) _{low}\text{.}
\end{equation*} 

The following treatment of symmetric integral matrices was kindly explained to the author by Nora Ganter. 
Suppose $H\in \Z^{k \times k}$ is a symmetric matrix. Define
$\beta_H: \R^{k} \to \ueins$
by 
\begin{equation}
\label{eq:defbetaH}
\beta_H(x) := \frac{1}{2}x^{tr}Hx -\frac{1}{2} \sum_{i=1}^{k} H_{ii}x_i\text{.}
\end{equation}

\begin{lemma}
The map $\beta_H$ satisfies the following conditions:
\begin{enumerate}[(a)]

\item 
\label{lem:evilnumbertheory:a}
$\beta_H(x+y)-\beta_H(x)-\beta_H(y)=x^{tr}Hy$ for all $x,y\in \R^{k}$ 

\item
\label{lem:evilnumbertheory:b}
$\beta_H(x)=1$ for all $x\in \Z^{n}$. 

\end{enumerate}
\end{lemma}

\begin{proof}
Since the second summand of $\beta_H$ is linear in $x$, it drops out of the formula in (a). Thus we have, due to the symmetry of $H$,
\begin{equation*}
\beta_H(x+y)-\beta_H(x)-\beta_H(y)= \frac{1}{2}(x+y)^{tr}H(x+y)- \frac{1}{2}x^{tr}Hx- \frac{1}{2}y^{tr}Hy=x^{tr}Hy\text{.}
\end{equation*}
For (b), we we note that $\beta_H(x)\in \frac{1}{2}\Z$ for $x\in \Z^{k}$. On the other hand, we show now that  $\beta_H(x)$ is zero mod $\Z$. First of all, we have, again using symmetry,
\begin{equation*}
\frac{1}{2}x^{tr}Hx =\frac{1}{2} \sum_{i=1}^{k}H_{ii}x_i^2 + \sum_{1\leq i<j \leq n} H_{ij}x_ix_j=\frac{1}{2}\sum_{i=1}^{k}H_{ii}x_i^2 \mod \Z\text{.}
\end{equation*}
It now remains to show that
\begin{equation*}
\frac{1}{2}\sum_{i=1}^{k}H_{ii}(x_i^2-x_i)=0 \mod \Z\text{.}
\end{equation*}
But $x_i^2-x_i$ is always even, and the claim follows. 
\end{proof}

We apply this to $H_{A,B}$ and define $\beta_{A,B} := \beta_{H_{A,B}}: \R^{2n} \to \ueins$.

\begin{lemma}
$\beta_{A,B}$ is a morphism $\beta_{A,B}: S(A) \cdot S(B) \to S(AB)$ in $\AU[n]$. 
\end{lemma}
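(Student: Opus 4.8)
The plan is to verify directly the two defining conditions for a morphism in $\AU[n]$, namely \cref{AU:4*} and \cref{AU:5*}, for the smooth map $\beta_{A,B} := \beta_{H_{A,B}}$ viewed as a candidate morphism from $S(A)\cdot S(B)=(AB,\tilde\eta)$ to $S(AB)=(AB,\eta_{AB})$. Both objects carry the \emph{same} matrix $AB$, so a morphism between them is admissible in the first place, and the whole task reduces to checking the two equations. The entire argument will ride on the two properties of $\beta_H$ from the preceding lemma, applied to the symmetric integral matrix $H=H_{A,B}\in\Z^{2n\times 2n}$ (so $k=2n$); these were designed precisely for this purpose.

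First I would dispatch \cref{AU:4*}: for $m\in\Z^{2n}$, \cref{lem:evilnumbertheory:b} gives $\beta_{A,B}(m)=0$, which is exactly the requirement that $\beta_{A,B}$ vanish on the lattice. Next I would treat \cref{AU:5*}. By \cref{lem:evilnumbertheory:a}, for all $a_1,a_2\in\R^{2n}$,
\begin{equation*}
\beta_{A,B}(a_1+a_2)-\beta_{A,B}(a_1)-\beta_{A,B}(a_2)=a_1^{tr}H_{A,B}\,a_2.
\end{equation*}
Invoking the definition \cref{eq:HABdef}, $H_{A,B}=\X AB-\B{AB}_{low}$, the right-hand side splits as $a_1^{tr}\X AB\,a_2 - a_1^{tr}\B{AB}_{low}\,a_2$, which by the bilinear-form descriptions $\tilde\eta(a_1,a_2)=\braket{a_1}{\X AB}{a_2}$ and $\eta_{AB}(a_1,a_2)=\braket{a_1}{\B{AB}_{low}}{a_2}$ equals $\tilde\eta(a_1,a_2)-\eta_{AB}(a_1,a_2)$. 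Rearranging this identity gives precisely
\begin{equation*}
\beta_{A,B}(a_1)+\beta_{A,B}(a_2)+\tilde\eta(a_1,a_2)=\eta_{AB}(a_1,a_2)+\beta_{A,B}(a_1+a_2),
\end{equation*}
which is \cref{AU:5*}.

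I do not expect any real obstacle: the content has been front-loaded into the construction of $H_{A,B}$ as the symmetric difference $\X AB-\B{AB}_{low}$ (so that $\beta_{H_{A,B}}$ has exactly the coboundary needed to interpolate between $\tilde\eta$ and $\eta_{AB}$) and into the $\beta_H$ lemma, which simultaneously secures vanishing on $\Z^{2n}$ and the coboundary identity. The only point requiring care is that the $\beta_H$ lemma applies at all, i.e. that $H_{A,B}$ is symmetric with integer entries; this was established just before \cref{eq:HABdef}, and with it in hand both morphism axioms follow immediately.
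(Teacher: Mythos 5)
Your proof is correct and follows essentially the same route as the paper: condition (A4) is exactly part (b) of the preceding lemma on $\beta_H$, and condition (A5) is, via part (a), a restatement of the defining identity $H_{A,B}=\mathcal{X}(A,B)-\mathcal{B}(AB)_{low}$ translating into $\tilde\eta-\eta_{AB}$ being the coboundary of $\beta_{A,B}$. Nothing further is needed.
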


\begin{proof}
\cref{lem:evilnumbertheory:b} gives Axiom \ref{AU:4}. Axiom \ref{AU:5} is
\begin{equation*}
\beta_{A,B}(x,y)+\beta_{A,B}(x',y')+\eta_{AB}((x,y),(x',y')) = \tilde\eta((x,y),(x',y'))+  \beta_{A,B}
(x+x',y+y')\text{.}
\end{equation*}
By \cref{lem:evilnumbertheory:a}, this is  equivalent to the definition $H_{A,B}=\X AB-\B {AB}_{low}$. 
\end{proof}

\begin{example}
For $n=1$ we have
\begin{align*}
\B {AB}_{low} &=\begin{pmatrix}
0 & 0 \\
A_{11}B_{12}A_{22}B_{21}+A_{12}B_{22}A_{21}B_{11} & 0 \\
\end{pmatrix} 
\\
\X AB &= \begin{pmatrix}
0 & B_{12}B_{21}A_{12}A_{21} \\
B_{11}B_{22}A_{12}A_{21}+\mathrm{iso}(A)B_{12}B_{21} & 0 \\
\end{pmatrix}
\end{align*}
so that
\begin{equation*}
H_{A,B} =\begin{pmatrix}
0 & B_{12}B_{21}A_{12}A_{21} \\
B_{12}B_{21}A_{12}A_{21} & 0 \\
\end{pmatrix} \text{.}
\end{equation*}
Thus, the formula for $\beta_{A,B}$  becomes simpler because $H_{A,B}$ is zero on the diagonal (for higher $n$, this is in general not the case).
We obtain
\begin{equation}
\label{eq:n1beta}
\beta_{A,B}(x,y) = B_{12}B_{21}A_{12}A_{21}xy\text{.}
\end{equation}
\end{example}

\begin{example}
\label{ex:mult:flip}
For $A,B \in Z=\{E_{2n},I\}$ we have the following table:
\begin{center}
\begin{tabular}{c|c|c|c|c|c}
$A$ & $B$ & $\B {AB}_{low}$ & $\X AB$ & $H_{A,B}$ & $\beta_{A,B}$ \\\hline
$E_{2n}$ & $E_{2n}$ & $0$ & $0$ & $0$ & $0$ \\
$E_{2n}$ & $I$ & $J$ & $J$ & $0$ & $0$ \\
$I$ & $E_{2n}$ & $J$ & $J$ & $0$ & $0$ \\
$I$ & $I$ & $0$ & $I$ & $I$ & $\beta_{I,I}$  \\
\end{tabular}
\end{center}
Here, the only non-trivial thing is to compute the items in the bottom row:
\begin{align*}
\X AB &= I^{tr}\B I_{low}I +\mathrm{iso}(I) \B I_{low} =I J I+ J =I\text{.}
\end{align*}
Note that $\beta_{I,I}$ is an isomorphism $\beta_{I,I}: S(I)^2 \to S(I^2)=1$. As a map $\beta_{I,I}:\R^{2n} \to \ueins$ is the quadratic form associated to the matrix $I$:
\begin{align*}
\beta_{I,I}(x) &=\frac{1}{2}xIx^{tr} - \frac{1}{2}\sum_{i=1}^{2n}I_{ii}x_i =\frac{1}{2}xIx^{tr}\text{.} 
\end{align*}
Since $\mathrm{O}(n,n,\Z)$ is the isometry group of $I$, we see that $\beta_{I,I}$ is $\mathrm{O}(n,n,\Z)$-invariant, i.e. $\beta_{I,I}(Ax)=\beta_{I,I}(x)$ for all $x\in \R^{2n}$ and $A\in \mathrm{O}(n,n,\Z)$.
\end{example}

\begin{remark}
\label{ex:flipautomorphism}
The crossed intertwiner associated by \cref{prop:modelCI} to the object $S(I)=(I,\eta_I)$ discussed in \cref{ex:split:c} is the automorphism $flip$ of \cref{ex:flip}. 
The crossed transformation $flip^2 \to \id$ associated by \cref{prop:modelCI} to the isomorphism    $\beta_{I,I}: S(I)^2 \to 1$ coincides with the crossed transformation $\beta$ discussed in \cref{ex:flip}.
From \cref{ex:coherence:I} we obtain the desired coherence law for $\beta$.
\end{remark}

\begin{example}
\label{ex:mult:Vi}
For $A,B \in \{E_{2n},V_{i}\}$ we have the following table:
\begin{center}
\begin{tabular}{c|c|c|c|c|c}
$A$ & $B$ & $\B {AB}_{low}$ & $\X AB$ & $H_{A,B}$ & $\beta_{A,B}$ \\\hline
$E_{2n}$ & $E_{2n}$ & $0$ & $0$ & $0$ & $0$ \\
$E_{2n}$ & $V_i$ & $E_{i+n,i}$ & $E_{i+n,i}$ & $0$ & $0$ \\
$V_i$ & $E_{2n}$ & $E_{i+n,i}$ & $E_{i+n,i}$ & $0$ & $0$ \\
$V_i$ & $V_i$ & $0$ & $E_{i,n+i}+E_{i+n,i}$ & $-E_{i,n+i}-E_{i+n,i}$ & $\beta_{V_i,V_i}$  \\
\end{tabular}
\end{center}
Here, the only non-trivial thing is to compute the items in the bottom row:
\begin{align*}
\X AB &= V_i^{tr}\B {V_i}_{low}V_i +\mathrm{iso}(V_i) \B {V_i}_{low} =V_iE_{i+n,i}V_i + E_{i+n,i}=E_{i,n+i}+E_{i+n,i}
\\
\beta_{V_i,V_i}(x) &= x_{i}x_{i+n}
\end{align*}
\end{example}

\begin{example}
\label{ex:splitting:gl}
Under restriction to the $\mathrm{GL}(n,\Z)$ subgroup we have $\B {AB}_{low}=0$ and $\X AB =0$; hence $H_{A,B}=0$ and $\beta_{A,B}=1$. The same is true for the subgroup $K$.
\end{example}

\begin{example}
\label{ex:splitting:so}
Under restriction to the $\mathfrak{so}(n,\Z)$-subgroup we have
\begin{equation*}
(B_{e^{A}e^{B}})_{low} =(B_{e^{A+B}})_{low}= \begin{pmatrix}
A_{low} + B_{low} & 0 \\
0 & 0 \\
\end{pmatrix}=(e^{B})^{tr} \, (B_{e^{A}})_{low} \,e^{B} + (B_{e^{B}} )_{low}=\X AB\text{.} 
\end{equation*}
Thus, $H_{e^{A},e^{B}}=0$ and hence $\beta_{e^{A},e^{B}}=0$. 
\end{example}

\label{sec:splitting:coherence}

Now that we have discussed the splitting $S$ and the isomorphisms $\beta_{A,B}$ we are in position to extract the cocycle $\xi_{A,B,C}$ representing the k-invariant, still following \cref{lem:class}.
The formula is\begin{align*}
\xi_{A,B,C} &=(\beta_{A,BC} \circ (\id_{S(A)} \cdot \beta_{B,C}) \circ (\beta_{A,B} \cdot \id_{S(C)})^{\circ -1}\circ \beta_{AB,C}^{\circ-1})\cdot \id_{S(ABC)^{-1}}\text{.}
\end{align*}
Using  multiplication of isomorphisms in $\AU[n]$, we have:
\begin{align*}
(\beta_{A,B} \cdot \id_{S(C)})(x) = \beta_{A,B}(Cx) 
\quand
(\id_{S(A)}\cdot \beta_{B,C})(x) = \mathrm{iso}(A)\beta_{B,C}(x)
\end{align*}
Further, $S(ABC)^{-1}=(C^{-1}B^{-1}A^{-1},\eta_{ABC}^{-1})$. 
Thus,
\begin{multline}
\label{eq:error}
\xi_{A,B,C}(x)=\beta_{A,BC}(C^{-1}B^{-1}A^{-1}x)+\mathrm{iso}(A)\beta_{B,C}(C^{-1}B^{-1}A^{-1}x)\\-\beta_{A,B}(B^{-1}A^{-1}x)-\beta_{AB,C}(C^{-1}B^{-1}A^{-1}x)\text{.}
\end{multline}
This is, in the first place, a smooth map $\xi_{A,B,C}:\R^{2n} \to \ueins$. As proved in \cref{re:automorphismsofautomorphisms}, it induces a Lie group homomorphism $\T^{2n} \to \ueins$, which, under the isomorphism of \cref{prop:pi1} is identified with $m_{A,B,C}\in\Z^{2n}$. Under this identification,  $\xi_{A,B,C}(x) = m_{A,B,C}\cdot x$ for all $x\in \R^{2n}$. Note that $\xi_{A,B,C}$ is a 3-cocycle on $\pi_0(\AU[n])=\mathrm{O}^{\pm}(n,n,\Z)$ with values in $\pi_1(\AU[n])$. Under the isomorphism of \cref{prop:pi1}, $m_{A,B,C}$ is then a 3-cocycle with values in $\Z^{2n}$, with respect to the action of \cref{lem:action}.
We summarize this as the following result.
\begin{proposition}
The k-invariant of the extension
\begin{equation*}
1 \to \Z^{2n} \to\AU[n] \to \mathrm{O}^{\pm}(n,n,\Z)\to 1 
\end{equation*}
is the class in $\h^3(\mathrm{O}^{\pm}(n,n,\Z),\Z^{2n})$ represented  
 by the cocycle $m:\mathrm{O}^{\pm}(n,n,\Z)^3 \to \Z^{2n}$ with $m_{A,B,C}\cdot x=\xi_{A,B,C}(x)$ for all $x\in \R^{2n}$ and $\xi_{A,B,C}$ defined in \cref{eq:error}. 
\end{proposition}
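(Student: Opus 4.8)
The plan is to apply \cref{lem:class} directly, taking for the section the map $S$ of \cref{lem:section} and for the filling isomorphisms the morphisms $\beta_{A,B}=\beta_{H_{A,B}}$ constructed above. First I would check the two hypotheses of \cref{lem:class}. The section satisfies $S(1)=1$ by \cref{ex:split:a}, and I would verify that $\beta_{1,A}$ and $\beta_{A,1}$ are identities; since $\beta_{A,B}=\beta_{H_{A,B}}$, this reduces to the vanishing of the symmetric matrices $H_{E_{2n},A}$ and $H_{A,E_{2n}}$, which is immediate from $\B{E_{2n}}=0$ together with the definitions of $\X AB$ and of $H_{A,B}=\X AB-\B{AB}_{low}$.

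With these hypotheses in place, \cref{lem:class} produces a unique normalized $3$-cocycle $\xi_{A,B,C}$ with values in $\pi_1(\mathbb{A}_n^{\pm})$ whose class is the $k$-invariant of the extension; concretely $\xi_{A,B,C}=b_{A,B,C}\cdot\id_{S(ABC)^{-1}}$, where $b_{A,B,C}$ denotes the automorphism of $S(ABC)$ closing the pentagon of \cref{lem:class}. The next step is to evaluate this expression. Writing $b_{A,B,C}=\beta_{A,BC}\circ(\id_{S(A)}\cdot\beta_{B,C})\circ(\beta_{A,B}\cdot\id_{S(C)})^{\circ-1}\circ\beta_{AB,C}^{\circ-1}$ and substituting the horizontal composition law $(\beta_1\cdot\beta_2)(a)=\beta_1(A_2a)+\mathrm{iso}(A_1)\beta_2(a)$ of $\AU[n]$, together with the facts that vertical composition is pointwise addition and that the inverse of a morphism is its negation, yields formula \cref{eq:error}. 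The twist $\cdot\,\id_{S(ABC)^{-1}}$ precomposes every term with $(ABC)^{-1}=C^{-1}B^{-1}A^{-1}$, which accounts for the matrix arguments appearing in \cref{eq:error}.

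Finally, I would identify this cocycle with one valued in $\Z^{2n}$. Since $\xi_{A,B,C}$ lies in $\pi_1(\mathbb{A}_n^{\pm})=\mathrm{Aut}((E_{2n},0))$, \cref{re:automorphismsofautomorphisms} shows that the smooth map $\xi_{A,B,C}:\R^{2n}\to\ueins$ is in fact a Lie group homomorphism $\T^{2n}\to\ueins$; under the isomorphism $\pi_1(\mathbb{A}_n^{\pm})\cong\Z^{2n}$ of \cref{prop:pi1} it corresponds to the unique $m_{A,B,C}\in\Z^{2n}$ satisfying $\xi_{A,B,C}(x)=m_{A,B,C}\cdot x$ for all $x\in\R^{2n}$. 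Transporting the cocycle condition furnished by \cref{lem:class} through this isomorphism, and recalling that the induced action is the one computed in \cref{lem:action}, shows that $m$ is a $3$-cocycle on $\mathrm{O}^{\pm}(n,n,\Z)$ with values in $\Z^{2n}$ that represents the $k$-invariant, as asserted.

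Most of the genuine content has already been secured in the preceding lemmas — the construction of $S$ and $\beta_{A,B}$ and the verification of their defining axioms — so what remains is essentially bookkeeping. The one place that demands care is the explicit evaluation giving \cref{eq:error}: because the horizontal composition law $(\beta_1\cdot\beta_2)(a)=\beta_1(A_2a)+\mathrm{iso}(A_1)\beta_2(a)$ is not symmetric in its two arguments, one must keep careful track of which object-matrix enters each product, so that the $\mathrm{iso}$ factors and the precomposing matrices come out in the correct positions.
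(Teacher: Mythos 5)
Your proposal is correct and follows essentially the same route as the paper: it applies \cref{lem:class} to the section $S$ of \cref{lem:section} with the filling isomorphisms $\beta_{A,B}=\beta_{H_{A,B}}$, evaluates the pentagon automorphism using the horizontal composition law of $\AU[n]$ (with vertical composition being pointwise addition) to arrive at \cref{eq:error}, and then identifies the resulting $\pi_1$-valued cocycle with a $\Z^{2n}$-valued one via \cref{re:automorphismsofautomorphisms} and \cref{prop:pi1}. Your explicit verification that $\beta_{1,A}$ and $\beta_{A,1}$ are identities, via $H_{E_{2n},A}=H_{A,E_{2n}}=0$, is a hypothesis of \cref{lem:class} that the paper leaves implicit, and your check of it is correct.
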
    

We will derive a more explicit formula for $m_{A,B,C}$ in \cref{sec:torsion}. Many properties of the k-invariant can, however, already be seen now. 

\begin{lemma}
\label{prop:n1}
The cocycle $m$ vanishes when $n=1$.
\end{lemma}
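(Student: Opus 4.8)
The plan is to substitute the explicit $n=1$ data into the cocycle formula \cref{eq:error} and to exploit the pseudo-orthogonality of the matrices that occur. For $n=1$, \cref{eq:n1beta} tells us that $\beta_{A,B}(x)=\epsilon(A)\epsilon(B)\,x_1x_2$, where I abbreviate $\epsilon(M):=M_{12}M_{21}$. Writing $Q(x):=x_1x_2$, every one of the four summands in \cref{eq:error} is therefore of the shape $\epsilon(X)\epsilon(Y)\cdot Q(M^{-1}x)$, with $M$ one of the products $ABC$ or $AB$ (recall $C^{-1}B^{-1}A^{-1}=(ABC)^{-1}$ and $B^{-1}A^{-1}=(AB)^{-1}$).

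The key observation is that $Q$ merely changes by the sign $\mathrm{iso}$ under $\mathrm{O}^{\pm}(1,1,\Z)$. Indeed, with $I$ as in \cref{eq:defskewiso} one has $x^{tr}Ix=2x_1x_2=2Q(x)$, so the defining relation $M^{tr}IM=\mathrm{iso}(M)\,I$ yields
\[
Q(Mx)=\tfrac12(Mx)^{tr}I(Mx)=\tfrac12\,\mathrm{iso}(M)\,x^{tr}Ix=\mathrm{iso}(M)\,Q(x),
\]
and since $\mathrm{iso}(M^{-1})=\mathrm{iso}(M)$ also $Q(M^{-1}x)=\mathrm{iso}(M)Q(x)$. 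Feeding this into \cref{eq:error}, each term becomes an integer multiple of $Q(x)$, so that
\[
\xi_{A,B,C}(x)=\lambda_{A,B,C}\cdot x_1x_2
\]
for some $\lambda_{A,B,C}\in\Z$ built from the various $\epsilon$'s and $\mathrm{iso}$'s. The decisive point is that no \emph{linear} remainder survives; this is precisely where pseudo-orthogonality enters.

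From here there are two equivalent finishes. The quick one: by \cref{re:automorphismsofautomorphisms}, applied as in \cref{lem:class}, the map $\xi_{A,B,C}$ is a group homomorphism $\T^2\to\ueins$, and by construction $\xi_{A,B,C}(x)=m_{A,B,C}\cdot x$. Evaluating the resulting identity $\lambda_{A,B,C}\,x_1x_2=m_{A,B,C}\cdot x$ in $\ueins$ at $x=(t,0)$ and at $x=(0,t)$ kills the quadratic term and gives $m_{A,B,C}\cdot(t,0)=m_{A,B,C}\cdot(0,t)=0$ for all $t\in\R$, whence $m_{A,B,C}=0$. The self-contained alternative is to show $\lambda_{A,B,C}=0$ directly. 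Using \cref{ex:n=1} one writes $\epsilon(M)=p(M)\,\mathrm{iso}(M)$, where $p(M)\in\{0,1\}$ is the indicator that $M$ is anti-diagonal (for anti-diagonal $M$ one has $\epsilon(M)=\mathrm{iso}(M)$, and $\epsilon(M)=0$ for diagonal $M$). Since $\mathrm{iso}$ is a homomorphism with $\mathrm{iso}(M)^2=1$ and $p$ is additive mod $2$, all the $\mathrm{iso}$-factors collapse and one is left with $\lambda_{A,B,C}=p(A)p(BC)+p(B)p(C)-p(A)p(B)-p(AB)p(C)$; substituting the XOR-identities $p(AB)=p(A)+p(B)-2p(A)p(B)$ and $p(BC)=p(B)+p(C)-2p(B)p(C)$ makes every monomial cancel, so $\lambda_{A,B,C}=0$ and $\xi_{A,B,C}\equiv0$.

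I expect the only genuinely delicate step to be the verification that the four contributions assemble into a \emph{pure} multiple of $Q$ (no linear part), since that is what makes the quadratic-versus-linear clash in the quick finish conclusive. The explicit evaluation of $\lambda_{A,B,C}$ is routine but the most error-prone bookkeeping, and the homomorphism argument is attractive precisely because it makes that computation unnecessary.
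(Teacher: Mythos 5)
Your proof is correct, but it takes a genuinely different route from the paper's. The paper's own argument is the brute-force one: it substitutes \cref{eq:n1beta} into each of the four terms of \cref{eq:error}, expands $(AB)_{12}(AB)_{21}$ and its analogues in matrix entries using the constraints $A_{11}A_{21}=A_{12}A_{22}=0$ from \cref{ex:n=1}, and watches the eight resulting monomials cancel in pairs. You instead exploit structure: the identity $Q(Mx)=\mathrm{iso}(M)\,Q(x)$ for $Q(x)=x_1x_2=\tfrac12 x^{tr}Ix$ turns every summand into an integer multiple of $Q$, after which either (i) the clash between the purely quadratic shape of $\xi_{A,B,C}$ and the linearity $\xi_{A,B,C}(x)=m_{A,B,C}\cdot x$ guaranteed by \cref{re:automorphismsofautomorphisms} forces $m_{A,B,C}=0$ with no further computation, or (ii) the factorization $\epsilon(M)=p(M)\,\mathrm{iso}(M)$, with $p$ the anti-diagonality indicator (a homomorphism to $\Z/2\Z$), reduces the coefficient to $p(A)p(BC)+p(B)p(C)-p(A)p(B)-p(AB)p(C)$, which vanishes by the XOR identity. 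I checked that your coefficient formula agrees with the paper's direct expansion, and finish (i) is exactly the ``linearity kills the quadratic part'' device the paper itself deploys later in \cref{sec:torsion}, so both finishes are sound. What your route buys is insight: it isolates which features of $n=1$ make the cocycle vanish (the vanishing diagonal of $H_{A,B}$, so $\beta_{A,B}$ is a pure quadratic form, together with the $\mathrm{iso}$-equivariance of $Q$); what the paper's buys is that it needs nothing beyond \cref{eq:n1beta}. One cosmetic remark: the second summand of \cref{eq:error} carries the extra prefactor $\mathrm{iso}(A)$, so it is not literally of the shape $\epsilon(X)\epsilon(Y)\,Q(M^{-1}x)$; your later bookkeeping absorbs this factor correctly into $\mathrm{iso}(ABC)^2=1$, so nothing is lost.
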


\begin{proof}
We obtain from the formula \cref{eq:n1beta}:
\begin{align*}
\beta_{AB,C}(x \oplus y) &=  -C_{12}C_{21}(A_{11}B_{12}A_{22}B_{21}+A_{12}B_{22}A_{21}B_{11})xy
\\
\mathrm{iso}(A)\beta_{B,C}(x \oplus y) &=-(A_{11}A_{22}+A_{12}A_{21})C_{12}C_{21}B_{12}B_{21}xy
\\
\beta_{A,BC}(x \oplus y) &= -(B_{11}C_{12}B_{22}C_{21}+B_{12}C_{22}B_{21}C_{11})A_{12}A_{21}xy
\\
\beta_{A,B}(C(x \oplus y)) &= -B_{12}B_{21}A_{12}A_{21}(C_{11}C_{22}+C_{12}C_{21})xy
\end{align*}
Substituting these terms in  \cref{eq:error} gives zero.
\end{proof}

\begin{lemma}
\label{lem:resglso}
The cocycle $m$ vanishes when  restricted to $\mathrm{GL}(n,\Z)$, $\mathfrak{so}(n,\Z)$, or $K$.
\end{lemma}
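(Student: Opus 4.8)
The plan is to observe that, once $A,B,C$ are constrained to either subgroup, every term in the explicit cocycle formula \cref{eq:error} already vanishes on the nose. Recall that $\xi_{A,B,C}$ is an alternating combination of the four maps $\beta_{A,BC}$, $\beta_{B,C}$, $\beta_{A,B}$, and $\beta_{AB,C}$, evaluated at suitable points of $\R^{2n}$. It therefore suffices to show that each of these four maps is identically zero under the restriction; the prefactor $\mathrm{iso}(A)$ sitting in front of $\beta_{B,C}$ plays no role once $\beta_{B,C}=0$.

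First I would note that both embeddings, $A\mapsto D_A$ of $\mathrm{GL}(n,\Z)$ and $B\mapsto e^{B}$ of $\mathfrak{so}(n,\Z)$, land in genuine subgroups of $\mathrm{O}^{\pm}(n,n,\Z)$: the former because $D$ is a group homomorphism, the latter because $e^{B_1}e^{B_2}=e^{B_1+B_2}$. Consequently, whenever $A,B,C$ lie in the subgroup, so do the products $AB$ and $BC$ appearing as subscripts, so that each of the four morphisms is of the shape $\beta_{X,Y}$ with both $X$ and $Y$ in the subgroup. For $\mathrm{GL}(n,\Z)$ I then invoke \cref{ex:splitting:gl}, which records $H_{X,Y}=0$ and therefore $\beta_{X,Y}=0$ for any two such elements; for $\mathfrak{so}(n,\Z)$ I invoke \cref{ex:splitting:so}, which gives $H_{e^{A},e^{B}}=0$ and hence $\beta_{e^{A},e^{B}}=0$. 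In either case all four terms of \cref{eq:error} vanish, so $\xi_{A,B,C}(x)=0$ for every $x\in\R^{2n}$, and thus $m_{A,B,C}=0$ on the subgroup.

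The only place carrying genuine content — already settled in the two cited examples — is the vanishing $H_{X,Y}=0$ itself: over $\mathrm{GL}(n,\Z)$ it rests on $\B{D_A}=0$ (\cref{ex:split:d}), which forces both $\X{D_A}{D_B}$ and $\B{D_AD_B}_{low}$ to vanish, while over $\mathfrak{so}(n,\Z)$ it rests on the block computation of \cref{ex:splitting:so}, where skew-symmetry of the exponents makes conjugation by $e^{B}$ act trivially on the relevant block. Given these, the present lemma is immediate, since the k-invariant cocycle of \cref{eq:error} is assembled entirely out of the $\beta_{X,Y}$, with no residual term left to cancel. I therefore expect no real obstacle beyond this bookkeeping: all the true cancellation already lives in the matrix identities of \cref{ex:splitting:gl,ex:splitting:so}, not in the cocycle formula.
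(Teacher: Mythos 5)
Your proposal is correct and is essentially the paper's own argument: the paper's proof of this lemma consists of the single sentence ``This follows from \cref{ex:splitting:gl,ex:splitting:so}'', which implicitly relies on exactly the reasoning you spell out, namely that closure of the two subgroups under multiplication puts all four subscript pairs in \cref{eq:error} inside the subgroup, where the cited examples give $H_{X,Y}=0$ and hence $\beta_{X,Y}=0$, so every term of the cocycle vanishes identically. Your write-up just makes the paper's implicit bookkeeping explicit.
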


\begin{proof}
This follows from \cref{ex:splitting:gl,ex:splitting:so}.
\end{proof}

\begin{lemma}
\label{ex:coherence:I}
The cocycle $m$ vanishes when  restricted to  $Z \subset \mathrm{O}(n,n,\Z)$.
\end{lemma}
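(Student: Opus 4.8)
The plan is to exploit that $Z=\{E_{2n},I\}$ has only two elements, so that $m|_{Z}$ is determined by its values on the eight triples in $Z^3$. Since the cocycle $\xi$ (and hence $m$) constructed in \cref{lem:class} is normalized, $m_{A,B,C}$ vanishes as soon as one of $A,B,C$ equals the unit $E_{2n}$. Thus the only triple that could produce a nonzero value is $(I,I,I)$, and the whole statement reduces to showing $\xi_{I,I,I}=0$.

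First I would substitute $A=B=C=I$ into the explicit formula \cref{eq:error}. Here I use $I^2=E_{2n}$ (so $I^{-1}=I$), $\mathrm{iso}(I)=1$ since $I\in\mathrm{O}(n,n,\Z)=\ker(\mathrm{iso})$, and the products $I^{-1}I^{-1}I^{-1}=I$ and $B^{-1}A^{-1}=I^{-1}I^{-1}=E_{2n}$. Reading off the table in \cref{ex:mult:flip} that $\beta_{I,E_{2n}}=\beta_{E_{2n},I}=0$, the two terms $\beta_{A,BC}=\beta_{I,E_{2n}}$ and $\beta_{AB,C}=\beta_{E_{2n},I}$ drop out, and the formula collapses to
\begin{equation*}
\xi_{I,I,I}(x)=\beta_{I,I}(Ix)-\beta_{I,I}(x)\text{.}
\end{equation*}

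It then remains to see that this difference vanishes. The key point, already recorded in \cref{ex:mult:flip}, is that $\beta_{I,I}$ is the quadratic form associated to the matrix $I$, hence is invariant under the isometry group $\mathrm{O}(n,n,\Z)$ of $I$, i.e. $\beta_{I,I}(Ax)=\beta_{I,I}(x)$ for every $A\in\mathrm{O}(n,n,\Z)$. Applying this with $A=I\in\mathrm{O}(n,n,\Z)$ gives $\beta_{I,I}(Ix)=\beta_{I,I}(x)$, whence $\xi_{I,I,I}=0$ and $m_{I,I,I}=0$. Together with the vanishing on all triples containing $E_{2n}$, this establishes $m|_{Z}=0$.

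I do not expect any real obstacle: once normalization restricts attention to $(I,I,I)$, the argument is pure bookkeeping. The only substantive ingredient is the isometry-invariance of $\beta_{I,I}$ — precisely the statement that $I$ preserves the bilinear form $I$ — and this is already available from \cref{ex:mult:flip} and the definition of $\mathrm{O}(n,n,\Z)$.
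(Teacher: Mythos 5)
Your proof is correct and follows essentially the same route as the paper: both reduce to the table of \cref{ex:mult:flip} and the $\mathrm{O}(n,n,\Z)$-invariance of $\beta_{I,I}$, ending with the cancellation $\beta_{I,I}(Ix)-\beta_{I,I}(x)=0$ on the triple $(I,I,I)$. Your appeal to the normalization of $\xi$ (valid, since $S(1)=1$ and $\beta_{A,E_{2n}}=\beta_{E_{2n},B}=0$) merely packages the paper's observation that the seven rows containing $E_{2n}$ are trivially zero.
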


\begin{proof}
From \cref{ex:mult:flip} we obtain:
\begin{center}
\begin{tabular}{c|c|c|c|c|c|c}
$A$ & $B$ & $C$ & $\beta_{AB,C}$ & $\beta_{A,B}$ & $\beta_{A,BC}$ & $\beta_{B,C}$  \\\hline
$E_{2n}$ & $E_{2n}$ & $E_{2n}$ & $0$ & $0$ & $0$ & $0$  \\
$E_{2n}$ & $E_{2n}$ & $I$ & $0$ & $0$ & $0$ & $0$   \\
$E_{2n}$ & $I$ & $E_{2n}$ & $0$ & $0$ & $0$ & $0$  \\
$E_{2n}$ & $I$ & $I$ & $\beta_{I,I}$ & $0$ & $0$ & $\beta_{I,I}$ \\
$I$ & $E_{2n}$ & $E_{2n}$ & $0$ & $0$ & $0$ & $0$  \\
$I$ & $E_{2n}$ & $I$ & $\beta_{I,I}$ & $0$ & $\beta_{I,I}$ & $0$ \\
$I$ & $I$ & $E_{2n}$ & $0$ & $\beta_{I,I}$ & $\beta_{I,I}$ & $0$ \\
$I$ & $I$ & $I$ & $0$ & $\beta_{I,I}$ & $0$ & $\beta_{I,I}$ \\
\end{tabular}
\end{center}
We compute \cref{eq:error} for each row, using that  $\mathrm{iso}(I)=1$ and that $\beta_{I,I}$ is $\mathrm{O}(n,n,\Z)$-invariant. Hence, the first factor in \cref{eq:error} reduces to
\begin{equation*}
\beta_{A,BC}(x)+\beta_{B,C}(x)-\beta_{A,B}(x)-\beta_{AB,C}(x)
\end{equation*}
It is obvious that this gives zero in each row. \end{proof}

\begin{lemma}
\label{lem:cocV}
The cocycle $m$ vanishes when  restricted to $V\subset \mathrm{O}(n,n,\Z)$.
\end{lemma}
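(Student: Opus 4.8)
The plan is to exploit that $V\cong(\Z/2\Z)^{n}$ is abelian with each generator $V_i$ acting by the coordinate swap $x_i\leftrightarrow x_{i+n}$ on a single pair of indices, so that all the data entering the cocycle \cref{eq:error} decouple across these $n$ pairs. First I would parametrize $V$ by subsets: for $S\subseteq\{1,\dots,n\}$ write $V_S:=\prod_{i\in S}V_i$, so that $V_SV_T=V_{S\triangle T}$ and $V_S^{-1}=V_S$. Since $V\subset\mathrm{O}(n,n,\Z)=\ker(\mathrm{iso})$ we have $\mathrm{iso}\equiv1$ on $V$, which trivializes the $\mathrm{iso}$-factors. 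Generalizing the single-generator computation of \cref{ex:mult:Vi} to arbitrary $S$ gives $\B{V_S}_{low}=\sum_{i\in S}E_{i+n,i}$; conjugating by the permutation $V_T$ and subtracting $\B{V_{S\triangle T}}_{low}$ then yields
\[
H_{V_S,V_T}=\sum_{i\in S\cap T}\bigl(E_{i,i+n}+E_{i+n,i}\bigr).
\]
As this matrix has vanishing diagonal, the associated isomorphism $\beta_{V_S,V_T}\maps S(V_S)\cdot S(V_T)\to S(V_SV_T)$ is the quadratic form $\beta_{V_S,V_T}(x)=\tfrac12 x^{tr}H_{V_S,V_T}x=\sum_{i\in S\cap T}x_i x_{i+n}$, consistent with $\beta_{V_i,V_i}(x)=x_ix_{i+n}$. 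I will abbreviate $Q_W(x):=\sum_{i\in W}x_i x_{i+n}$, so that $\beta_{V_S,V_T}=Q_{S\cap T}$.

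The key simplification is that every such $Q_W$ is invariant under the coordinate-swap action of all of $V$: swapping $x_i\leftrightarrow x_{i+n}$ fixes the product $x_ix_{i+n}$, whence $Q_W(V_Px)=Q_W(x)$ for every $P$. Since the arguments $(ABC)^{-1}x$ and $(AB)^{-1}x$ occurring in \cref{eq:error} are obtained from $x$ by elements of $V$, I may replace every evaluation point by $x$. Setting $A=V_S$, $B=V_T$, $C=V_U$ and using $\mathrm{iso}\equiv1$, \cref{eq:error} then collapses to
\[
\xi_{A,B,C}(x)=Q_{S\cap(T\triangle U)}(x)+Q_{T\cap U}(x)-Q_{S\cap T}(x)-Q_{(S\triangle T)\cap U}(x).
\]

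It then remains to read off the coefficient of each monomial $x_ix_{i+n}$, namely the integer $\mathbf 1[\,i\in S\cap(T\triangle U)\,]+\mathbf 1[\,i\in T\cap U\,]-\mathbf 1[\,i\in S\cap T\,]-\mathbf 1[\,i\in(S\triangle T)\cap U\,]$, and to check that it vanishes. Writing $s,t,u\in\{0,1\}$ for the indicators of $i\in S,T,U$ and expanding $\triangle$ via $t\oplus u=t+u-2tu$, this reduces to the Boolean identity
\[
s(t+u-2tu)+tu-st-u(s+t-2st)=0,
\]
which holds identically. Hence $\xi_{A,B,C}\equiv0$, so the induced homomorphism $\T^{2n}\to\ueins$ is trivial and $m|_{V}=0$.

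I expect the only genuine obstacle to be the bookkeeping at two points: correctly conjugating $\B{V_S}_{low}$ by $V_T$ — tracking which summands $E_{i+n,i}$ flip to $E_{i,i+n}$ according to membership in $T$ — to extract $H_{V_S,V_T}$, and justifying that the evaluation points may be dropped via the $V$-invariance of the forms $Q_W$. After that the vanishing is the elementary computation above. Conceptually the result is unsurprising: because the pairs $(i,i+n)$ are acted on independently and each factor $\langle V_i\rangle$ looks, in its own $2\times2$ block, exactly like $Z=\langle I\rangle\subset\mathrm{O}(1,1,\Z)$, the statement is an $n$-fold orthogonal-sum version of \cref{prop:n1} and \cref{ex:coherence:I}.
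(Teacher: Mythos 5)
Your proof is correct, and it follows the same basic strategy as the paper's: compute $\B{V_S}_{low}$, extract the symmetric matrices $H_{V_S,V_T}$, observe that the resulting quadratic forms $\beta_{V_S,V_T}=Q_{S\cap T}$ are invariant under the coordinate-swap action of $V$ so that every evaluation point in \cref{eq:error} may be replaced by $x$, and then check that the alternating sum cancels. The difference is one of completeness. The paper's proof only tabulates triples $A,B,C\in\{E_{2n},V_i\}$ for a single fixed $i$ (the table of \cref{ex:mult:Vi} with $I$ replaced by $V_i$), i.e.\ it verifies the vanishing of $m$ on each cyclic factor $\langle V_i\rangle\cong\Z/2\Z$ and leaves the passage to arbitrary elements of $V\cong(\Z/2\Z)^{n}$ --- triples such as $(V_1V_2,\,V_2V_3,\,V_1)$ --- implicit, presumably because everything decouples across the pairs $(i,i+n)$. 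Your subset parametrization $V_S$, the formula $H_{V_S,V_T}=\sum_{i\in S\cap T}(E_{i,i+n}+E_{i+n,i})$, and the Boolean identity $s(t+u-2tu)+tu-st-u(s+t-2st)=0$ make that decoupling explicit and give a genuinely complete verification over all of $V$; this is the stronger version of the argument, and your closing remark that each $\langle V_i\rangle$ looks in its own block like $Z=\langle I\rangle\subset\mathrm{O}(1,1,\Z)$ is exactly the heuristic behind the paper's abbreviated treatment. One small point in your favour: your sign for $H_{V_i,V_i}$ is the one dictated by the definition $H_{A,B}=\X{A}{B}-\B{AB}_{low}$; the minus sign in the last row of the table in \cref{ex:mult:Vi} is a typo, as one sees from the stated value $\beta_{V_i,V_i}(x)=x_ix_{i+n}$.
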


\begin{proof}
From \cref{ex:mult:Vi} we obtain the same table as in the proof of \cref{ex:coherence:I} just with $I$ replaced by $V_i$. Since $\beta_{V_i,V_i}$ is not $\mathrm{O}(n,n,\Z)$-invariant, the expression from \cref{eq:error} is here
\begin{align}
\label{eq:errorn}
\beta_{A,BC}(x)+\beta_{B,C}(x)-\beta_{A,B}(Cx)-\beta_{AB,C}(x)\text{.}
\end{align} 
For the  four rows with $C=0$ this is irrelevant, and the first three and the  fifth row is trivial anyway. Rows six is as before because $\beta_{A,B}=0$. Only in row eight we need to compute new:
\begin{equation*}
\beta_{V_i,V_i}(x)=x_{i}x_{i+n}=x_{i+n}x_i=(V_ix)_i(V_ix)_{i+n}=\beta_{V_i,V_i}(V_ix)\text{.}
\end{equation*}
Thus, \cref{eq:errorn} gives again zero. 
\end{proof}

It is interesting to note that in all of the above lemmas, the cocycle $m$ representing the k-invariant is identically equal to zero. In order to further explore this, we  consider the functor $\mathbb{I}_S$ defined as the composition
\begin{equation*}
\alxydim{}{\mathrm{O}^{\pm}(n,n,\Z)_{dis} \ar[r]^-{S} & \AU[n] \ar[r]^-{\mathbb{I}} & \AUT_{CI}(\TD[n]) \to \AUT(\TD[n])\text{.}}
\end{equation*}
By \cref{prop:pi0}, it induces a group isomorphism
\begin{equation*}
\pi_0\mathbb{I}_S: \mathrm{O}^{\pm}(n,n,\Z) \to \pi_0\AUT(\TD[n])\text{.}
\end{equation*}
As explained in  \cref{sec:intro}, this can be seen as an action by homotopy equivalences on the classifying space $|B\TD[n]|$, whose restriction to $\mathrm{O}(n,n,\Z) \subset \mathrm{O}^{\pm}(n,n,\Z)$ has been discussed in \cite{bunke2006a}.
Due to the non-vanishing of the cocycle $m$, the functor $\mathbb{I}_S$ itself is not monoidal, i.e., it is not a 2-group homomorphism: the action by homotopy equivalences cannot be improved to a homotopy-coherent action of $\mathrm{O}^{\pm}(n,n,\Z)$ on $\TD[n]$. However, above lemmas show that such improvement can be obtained in special cases.

\begin{proposition}
\label{prop:actions}
The functor 
\begin{equation*}
\mathbb{I}_S: \mathrm{O}^{\pm}(n,n,\Z)_{dis} \to \AUT(\TD[n])
\end{equation*}
is a 2-group homomorphism if either
\begin{enumerate}

\item 
$n=1$, or

\item
after restriction to one of the subgroups $\mathfrak{so}(n,\Z)$, $\mathrm{GL}(n,\Z)$, $Z$, $V$, or $K$.

\end{enumerate}
Thus, we obtain  homotopy-coherent actions of $\mathrm{O}^{\pm}(1,1,\Z)$ on $\TD_1$, and of the groups $\mathfrak{so}(n,\Z)$, $\mathrm{GL}(n,\Z)$, $Z$, $V$, and $K$ on $\TD[n]$.
\end{proposition}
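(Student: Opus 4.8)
The plan is to let \cref{lem:class} carry the entire conceptual load, so that the proof becomes an assembly of the vanishing results already established. First I would record that $\mathbb{I}_S$ is by construction the section $S$ of \cref{lem:section} followed by the two isomorphisms of $2$-groups $\mathbb{I}\colon\AU[n]\to\AUT_{CI}(\TD[n])$ (\cref{prop:modelCI}) and $\AUT_{CI}(\TD[n])\cong\AUT(\TD[n])$ (\cref{th:auto}, whose hypotheses \ref{aut:1}--\ref{aut:3} hold for $\TD[n]$). Since post-composing a weak $2$-group homomorphism with isomorphisms of $2$-groups again yields a $2$-group homomorphism, it suffices in each case to show that $S$, or its relevant restriction, is a weak $2$-group homomorphism. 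By the final clause of \cref{lem:class}, the section $S$ equipped with the multiplicators $\beta_{A,B}$ constructed above is a weak $2$-group homomorphism precisely when the representing cocycle $m$ vanishes \emph{identically} (not merely up to a coboundary); this is exactly the input the earlier lemmas provide.

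For the case $n=1$ this is immediate: \cref{prop:n1} gives $m=0$ on all of $\mathrm{O}^{\pm}(1,1,\Z)$, so \cref{lem:class} applies verbatim and $\mathbb{I}_S$ is a $2$-group homomorphism on the nose.

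For the restriction statements I would first observe that, for any subgroup $G'\subseteq\mathrm{O}^{\pm}(n,n,\Z)$, the full sub-$2$-group of $\AU[n]$ spanned by the objects $(A,\eta)$ with $A\in G'$ is again a strict $2$-group extension $B\Z^{2n}\to\AU[n]|_{G'}\to G'_{dis}$, that $S|_{G'}$ is a section of it with the same multiplicators $\beta_{A,B}$ for $A,B\in G'$, and that its k-invariant is represented by the restriction of $m$ to $(G')^3$. The identical vanishing of this restricted cocycle is then furnished by \cref{lem:resglso} (for $\mathrm{GL}(n,\Z)$ and $\mathfrak{so}(n,\Z)$), by \cref{ex:coherence:I} (for $Z$), and by \cref{lem:cocV} (for $V$). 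Hence \cref{lem:class} again shows that $S|_{G'}$ is a weak $2$-group homomorphism, and composing with $\mathbb{I}$ and the isomorphism to $\AUT(\TD[n])$ makes $\mathbb{I}_S|_{G'}$ a $2$-group homomorphism.

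Finally, to read off the coherent actions I would invoke the dictionary recalled in \cref{sec:intro}: a $2$-group homomorphism $G_{dis}\to\AUT(\TD[n])$ is the same datum as a coherent action of $G$ on $\TD[n]$. Applying this to the homomorphisms just produced yields the asserted coherent actions of $\mathrm{O}^{\pm}(1,1,\Z)$ on $\TD[1]$ and of $\mathfrak{so}(n,\Z)$, $\mathrm{GL}(n,\Z)$, $Z$, and $V$ on $\TD[n]$. I expect the only genuinely delicate point to be the bookkeeping of the restriction step: one must confirm that the restricted section and restricted multiplicators really compute the k-invariant of the restricted extension, so that it is the \emph{identical} vanishing of $m|_{(G')^3}$ (rather than mere cohomological triviality) that licenses the conclusion that $S|_{G'}$ is an honest weak homomorphism. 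All the quantitative content is already contained in the cited lemmas, so no further computation should be required.
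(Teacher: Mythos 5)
Your proposal is correct and follows essentially the same route as the paper: the paper likewise treats \cref{prop:actions} as a direct consequence of the final clause of \cref{lem:class} together with the identical vanishing of the cocycle $m$ established in \cref{prop:n1,lem:resglso,ex:coherence:I,lem:cocV}, followed by composition with the isomorphisms $\mathbb{I}$ and $\AUT_{CI}(\TD[n])\cong\AUT(\TD[n])$. Your extra care about the restriction bookkeeping and the distinction between identical and merely cohomological vanishing is sound and consistent with how the paper uses these results.
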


\noindent
The geometric meaning of these  homotopy-coherent group actions will be further explored in \cref{sec:Tautomorphisms}.

The case that the cocycle $m$ is non-vanishing but represents $0\in \h^3( \mathrm{O}^{\pm}(n,n,\Z),\Z^{2n})$ may or may not occur for certain $n>1$ or after restriction to other subgroups of $\mathrm{O}^{\pm}(n,n,\Z)$, but I do not know any examples of this kind. In such cases, a trivializing coboundary could be used to change our section $S$ to another section $S'$ for which then $\mathbb{I}_{S'}$ is monoidal. As remarked in the introduction, it is not known if $\h^3( \mathrm{O}^{\pm}(n,n,\Z),\Z^{2n})=0$.   

\subsection{The k-invariant is 2-torsion}

\label{sec:torsion}

In this section we prove that the k-invariant of $\AUT(\TD[n])$ is 2-torsion in $\h^3(\mathrm{O}^{\pm}(n,n,\Z),\Z^{2n})$. 
We copy from \cref{eq:error}
\begin{multline*}
\xi_{A,B,C}(x)=\beta_{A,BC}(C^{-1}B^{-1}A^{-1}x)+\mathrm{iso}(A)\beta_{B,C}(C^{-1}B^{-1}A^{-1}x)\\-\beta_{A,B}(B^{-1}A^{-1}x)-\beta_{AB,C}(C^{-1}B^{-1}A^{-1}x)\text{.}
\end{multline*}
and recall from \cref{eq:defbetaH} that
\begin{equation*}
\beta_{A,B}(x)=\frac{1}{2}x^{tr}H_{A,B}x -\frac{1}{2}  H^{diag}_{A,B}  \cdot x\text{,}
\end{equation*}
where we have introduced the notation
\begin{equation*}
()^{diag}: \Z^{2n \times 2n} \to \Z^{2n}
\end{equation*}
that extracts the diagonal of a matrix as a vector.
Since we know that $\xi_{A,B,C}$ is a group homomorphism and hence linear, the quadratic terms  cancel, and we may write the result as
\begin{equation*}
\xi_{A,B,C}(x) =-\frac{1}{2}(  H^{diag}_{A,BC}+\mathrm{iso}(A)  H^{diag}_{B,C}- C^{tr} H^{diag}_{A,B}- H^{diag}_{AB,C})\cdot C^{-1}B^{-1}A^{-1}x\text{.}
\end{equation*}
Under the isomorphism $\mathrm{Hom}^{\infty}(\T^{2n},\ueins) \cong \Z^{2n}$ we hence have
\begin{equation*}
m_{A,B,C} =-\frac{1}{2}A^{tr-1}B^{tr-1}C^{tr-1}( H^{diag}_{A,BC}+\mathrm{iso}(A)  H^{diag}_{B,C}
-  C^{tr}H^{diag}_{A,B}- H^{diag}_{AB,C})\text{.}
\end{equation*}
We may now go further and use that
\begin{equation*}
H_{A,B}^{diag} \eqcref{eq:HABdef} -\X AB^{diag}= -(B^{tr}\B A_{low}B)^{diag}\text{.}
\end{equation*}
Substituting this, we obtain the following.

\begin{proposition}
\label{prop:coex}
The expression
\begin{multline*}
m_{A,B,C} =-\frac{1}{2}A^{tr-1}B^{tr-1}C^{tr-1}\left (C^{tr}(B^{tr}(\B A)_{low}B)^{diag}  +(C^{tr}(\B {AB})_{low}C)^{diag}\right .
\\\left .\qquad-\mathrm{iso}(A)(C^{tr}(\B B)_{low}C)^{diag}-(C^{tr}B^{tr}(\B A)_{low}BC)^{diag} \right) \end{multline*}
defines a 3-cocycle on the group $\mathrm{O}^{\pm}(n,n,\Z)$ with values in  $\Z^{2n}$ w.r.t. the action of \cref{lem:action}, and it represents the k-invariant of $\AUT(\TD[n])$.  
\end{proposition}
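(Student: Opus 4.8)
The cocycle property and the k-invariant interpretation require no fresh argument: by \cref{lem:class}, applied to the section $S$ of \cref{lem:section} together with the isomorphisms $\beta_{A,B}$, the map $\xi_{A,B,C}$ of \cref{eq:error} is automatically a normalized $3$-cocycle on $\mathrm{O}^{\pm}(n,n,\Z)$ whose class is the k-invariant of $\AUT(\TD[n])$, and by \cref{re:automorphismsofautomorphisms} together with \cref{prop:pi1} it is identified with $m_{A,B,C}\in\Z^{2n}$ via $m_{A,B,C}\cdot x=\xi_{A,B,C}(x)$. The entire content of the proposition is thus the passage from \cref{eq:error} to the displayed closed form, which I would carry out by substituting the explicit shape of the $\beta_{A,B}$, discarding the quadratic part, and transposing the resulting linear functional into a vector.

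Concretely, I would first insert \cref{eq:defbetaH} into each of the four summands of \cref{eq:error}, splitting $\xi_{A,B,C}(x)$ into a quadratic and a linear contribution in $x$. For the quadratic contribution I would invoke the fact, recorded just before the statement, that $\xi_{A,B,C}$ induces a homomorphism $\T^{2n}\to\ueins$ and is therefore additive: a sum of a quadratic form and a linear map can be additive only if the quadratic form vanishes, since polarizing $\tfrac12 x^{tr}Hx$ recovers $x^{tr}Hy$ and additivity forces this to be identically $0$, hence $H=0$ (the direct cancellation can alternatively be checked from \cref{eq:cocycleB} and the definition of $H_{A,B}$). Only the linear terms then survive, and collecting them—rewriting the argument $B^{-1}A^{-1}x$ of the $\beta_{A,B}$ summand as $C\,(C^{-1}B^{-1}A^{-1}x)$, which is exactly what produces the factor $C^{tr}$ on the corresponding $H^{diag}_{A,B}$—gives the intermediate expression
\[\xi_{A,B,C}(x)=-\tfrac12\bigl(H^{diag}_{A,BC}+\mathrm{iso}(A)H^{diag}_{B,C}-C^{tr}H^{diag}_{A,B}-H^{diag}_{AB,C}\bigr)\cdot C^{-1}B^{-1}A^{-1}x.\]

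Finally I would convert this linear functional into the vector $m_{A,B,C}$ under $\mathrm{Hom}^{\infty}(\T^{2n},\ueins)\cong\Z^{2n}$, using the elementary rule $v\cdot Mx=(M^{tr}v)\cdot x$; this turns the argument $C^{-1}B^{-1}A^{-1}$ into $A^{tr-1}B^{tr-1}C^{tr-1}$ acting on the bracketed coefficient vector. Substituting for each $H^{diag}$ its value in terms of $(B^{tr}\mathcal{B}(A)_{low}B)^{diag}$—the identity recorded immediately before the statement via \cref{eq:HABdef}, which holds because $\mathcal{B}(AB)_{low}$ and $\mathcal{B}(B)_{low}$ are strictly lower triangular and hence have vanishing diagonal—and distributing the transposed matrices over the four terms yields exactly the asserted formula. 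I anticipate no conceptual obstacle: the cocycle and k-invariant claims are handed to us by \cref{lem:class}, and the remainder is bookkeeping. The one place that genuinely demands care is the transpose accounting in the conversion $\xi_{A,B,C}\leftrightarrow m_{A,B,C}$—tracking which summands carry the argument $C^{-1}B^{-1}A^{-1}x$ rather than $B^{-1}A^{-1}x$, and hence where the surviving $C^{tr}$ lands—after which the diagonal-vanishing of strictly lower-triangular matrices makes the final $H^{diag}\leftrightarrow\mathcal{B}(-)_{low}$ replacement immediate.
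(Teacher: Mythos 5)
Your proposal follows essentially the same route as the paper's own derivation: invoke \cref{lem:class} for the cocycle and k-invariant claims, substitute \cref{eq:defbetaH} into \cref{eq:error}, discard the quadratic part because $\xi_{A,B,C}$ is a group homomorphism, transpose the resulting linear functional into a vector, and replace each $H^{diag}$ using the vanishing diagonals of the strictly lower-triangular matrices. The only caveat is the overall sign in the last step: your (correct) reasoning gives $H^{diag}_{A,B}=+(B^{tr}(\B A)_{low}B)^{diag}$, so the bracket you obtain is the negative of the one displayed in the proposition --- a discrepancy already present in the paper's own computation at this point, and immaterial for the represented class since it is $2$-torsion.
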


The explicit description of the cocycle $m$ given in \cref{prop:coex} allows us to prove the following result.

\begin{proposition}
\label{prop:torsion}
The k-invariant of $\AUT(\TD[n])$ is 2-torsion in $\h^3(\mathrm{O}^{\pm}(n,n,\Z),\Z^{2n})$.
\end{proposition}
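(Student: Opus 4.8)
The plan is to prove the $2$-torsion statement by exhibiting an integral $2$-cochain $\gamma$ on $\mathrm{O}^{\pm}(n,n,\Z)$ whose coboundary equals $2m$ up to sign; then $2m$ is a coboundary and the class of $m$ is annihilated by $2$ in $\h^3(\mathrm{O}^{\pm}(n,n,\Z),\Z^{2n})$. The half-integer factor sitting in front of the formula of \cref{prop:coex} is exactly what makes this plausible: doubling $m$ clears the $\frac{1}{2}$ and leaves an integral expression assembled from the diagonals $H^{diag}_{A,B}$, which is a natural candidate for a coboundary.

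First I would fix the cochain
\begin{equation*}
\gamma_{A,B} := A^{tr-1}B^{tr-1}H^{diag}_{A,B} \in \Z^{2n}\text{.}
\end{equation*}
This is genuinely $\Z^{2n}$-valued: the matrix $H_{A,B}$ is integral by \cref{eq:HABdef}, so $H^{diag}_{A,B}\in \Z^{2n}$, and since $A,B\in \mathrm{O}^{\pm}(n,n,\Z)\subset \mathrm{GL}(2n,\Z)$ the transpose-inverses $A^{tr-1},B^{tr-1}$ again lie in $\mathrm{GL}(2n,\Z)$.

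The computation rests on two observations. The module structure on $\Z^{2n}$ is $m\mapsto IAIm$ by \cref{lem:action}, and the defining relation $\mathrm{iso}(A)A^{-1}=IA^{tr}I$ of $\mathrm{O}^{\pm}(n,n,\Z)$ rearranges to $IAI=\mathrm{iso}(A)A^{tr-1}$; thus the action is precisely $\rho(A)=\mathrm{iso}(A)A^{tr-1}$, so the transpose-inverses appearing in $\gamma$ are exactly those produced by the action. With the inhomogeneous coboundary
\begin{equation*}
(\mathrm{d}\gamma)(A,B,C)=IAI\,\gamma_{B,C}-\gamma_{AB,C}+\gamma_{A,BC}-\gamma_{A,B}\text{,}
\end{equation*}
I would substitute $\gamma$ and use $IAI=\mathrm{iso}(A)A^{tr-1}$, $(AB)^{tr-1}=A^{tr-1}B^{tr-1}$, and $C^{tr-1}C^{tr}=E_{2n}$ to collect all four terms over the common factor $A^{tr-1}B^{tr-1}C^{tr-1}$. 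Matching against the intermediate formula
\begin{equation*}
m_{A,B,C}=-\frac{1}{2}A^{tr-1}B^{tr-1}C^{tr-1}\bigl(H^{diag}_{A,BC}+\mathrm{iso}(A)H^{diag}_{B,C}-C^{tr}H^{diag}_{A,B}-H^{diag}_{AB,C}\bigr)
\end{equation*}
from which \cref{prop:coex} is derived, I expect to obtain $(\mathrm{d}\gamma)(A,B,C)=-2m_{A,B,C}$ term by term, so that $2m=\mathrm{d}(-\gamma)$ is a coboundary.

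I do not anticipate a genuine obstacle: once the cochain $\gamma$ is guessed, the argument is a direct matching of four terms, and the only points requiring care are the integrality of $\gamma$ (handled above) and the bookkeeping of signs and of the $\mathrm{iso}(A)$ factors, both controlled by the identity $IAI=\mathrm{iso}(A)A^{tr-1}$. The real conceptual work was already carried out in producing the explicit cocycle of \cref{prop:coex}; the torsion statement is then essentially forced by the visible factor $\frac{1}{2}$.
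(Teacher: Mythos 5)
Your proposal is correct and follows essentially the same route as the paper: the paper likewise defines the integral $2$-cochain $\gamma_{A,B}=-A^{tr-1}B^{tr-1}(B^{tr}(\mathcal{B}(A))_{low}B)^{diag}$, which agrees (up to an overall sign that is immaterial for the torsion conclusion) with your $A^{tr-1}B^{tr-1}H^{diag}_{A,B}$, and checks that its coboundary with respect to the action $m\mapsto IAIm$ equals $2m_{A,B,C}$ by the same term-by-term matching against the formula of \cref{prop:coex}. Your identification $IAI=\mathrm{iso}(A)A^{tr-1}$ and the bookkeeping of the transpose-inverse prefactors are exactly the computation the paper leaves implicit.
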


\begin{proof}
We define the expression
\begin{equation*}
\gamma_{A,B}:=A^{tr-1}B^{tr-1}(B^{tr}(\B A)_{low}B)^{diag}\text{,}
\end{equation*}
which is a 2-cochain on $\mathrm{O}^{\pm}(n,n,\Z)$ with coefficients  in $\Z^{2n}$.
We claim that 
\begin{equation*}
(\delta\gamma)_{A,B,C}=2m_{A,B,C}\text{,}
\end{equation*}
which follows by a direct computation: 
\begin{align*}
(\delta\gamma)_{A,B,C}&=\alpha(A, \gamma_{B,C})-\gamma(AB,C)+\gamma(A,BC)-\gamma(A,B)
\\&=A^{tr-1}B^{tr-1}C^{tr-1}\big (\mathrm{iso}(A)(C^{tr}(\B B)_{low}C)^{diag}-(C^{tr}(\B {AB})_{low}C)^{diag}
\\&\qquad+(C^{tr}B^{tr}(\B A)_{low}BC)^{diag}-C^{tr}(B^{tr}(\B A)_{low}B)^{diag}\big )
\end{align*} 
Here, $\alpha$ denotes the action of $\mathrm{O}^{\pm}(n,n,\Z)$ on $\Z^{2n}$ which we computed in \cref{lem:action}, and which we use here in the form $\alpha(A,m) =\mathrm{iso}(A)A^{tr-1}m$ described there. The result coincides with 2 times the expression from \cref{prop:coex}. 
\end{proof}

The Bockstein sequence $\Z^{2n} \stackrel{\cdot 2}\to \Z^{2n} \to \Z^{2n}/2\Z^{2n}$ is a sequence of $\mathrm{O}^{\pm}(n,n,\Z)$-modules under the action  of \cref{lem:action}, and hence induces a long exact sequence in group cohomology.
By \cref{prop:torsion}, the k-invariant of $\AUT(\TD[n])$ lies in the image of the connecting homomorphism 
\begin{equation*}
\alxydim{}{
\h^2(\mathrm{O}^{\pm}(n,n,\Z),\Z^{2n}/2\Z^{2n}) \ar[r]^-{\delta} & \h^3(\mathrm{O}^{\pm}(n,n,\Z),\Z^{2n})\text{.} }
\end{equation*}
The construction of the connecting homomorphism in fact tells us that 
\begin{equation*}
\tilde\gamma_{A,B} :=\gamma_{A,B} \mod 2\Z^{2n}
\end{equation*}
is a cocycle whose class $[\tilde\gamma] \in \h^2(\mathrm{O}^{\pm}(n,n,\Z),\Z^{2n}/2\Z^{2n})$ is a preimage of the k-invariant under $\delta$. On the other hand, the 2-cocycle $\tilde\gamma$ defines an (ordinary) group extension
\begin{equation*}
1\to \Z^{2n}/2\Z^{2n} \to \widetilde{\mathrm{O}^{\pm}(n,n,\Z)} \to \mathrm{O}^{\pm}(n,n,\Z) \to 1\text{.} 
\end{equation*} 
The following result is an elementary consequence, but worthwhile to note.

\begin{proposition}
The pullback of the extension
\begin{equation*}
1 \to B\Z^{2n} \to \AUT(\TD[n]) \to \mathrm{O}^{\pm}(n,n,\Z)_{dis} \to 1
\end{equation*}
along the group homomorphism
\begin{equation*}
\widetilde{\mathrm{O}^{\pm}(n,n,\Z)} \to \mathrm{O}^{\pm}(n,n,\Z)
\end{equation*} 
splits, i.e., it has trivial k-invariant. 
\end{proposition}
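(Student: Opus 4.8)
The plan is to reduce the statement to the naturality of the $k$-invariant under pullback, together with the standard fact that a group extension becomes split when pulled back along its own projection. Set $G := \mathrm{O}^{\pm}(n,n,\Z)$, regard $\Z^{2n}$ as a $G$-module via the action of \cref{lem:action}, and let $\pi: \widetilde{G} \to G$ be the projection of the ordinary group extension $\widetilde{\mathrm{O}^{\pm}(n,n,\Z)}$ classified by $[\tilde\gamma] \in \h^2(G, \Z^{2n}/2\Z^{2n})$. Pulling back the $2$-group extension along $\pi$ yields a $2$-group extension
\begin{equation*}
B\Z^{2n} \to \pi^{*}\AUT(\TD[n]) \to \widetilde{G}_{dis}\text{,}
\end{equation*}
whose induced action on $\Z^{2n}$ is the action of \cref{lem:action} precomposed with $\pi$. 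By \cref{th:class} this extension is trivial precisely when its $k$-invariant vanishes in $\h^3(\widetilde{G}, \Z^{2n})$, and its $k$-invariant is $\pi^{*}[m]$, the image of the $k$-invariant $[m]$ of $\AUT(\TD[n])$ under the restriction map $\pi^{*}: \h^3(G,\Z^{2n}) \to \h^3(\widetilde{G},\Z^{2n})$. Thus it suffices to prove $\pi^{*}[m]=0$.

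First I would invoke \cref{prop:torsion}, which writes $[m]=\delta[\tilde\gamma]$, where $\delta: \h^2(G,\Z^{2n}/2\Z^{2n}) \to \h^3(G,\Z^{2n})$ is the connecting homomorphism of the Bockstein sequence $0 \to \Z^{2n} \stackrel{\cdot 2}\to \Z^{2n} \to \Z^{2n}/2\Z^{2n} \to 0$. This short exact sequence of $G$-modules pulls back along $\pi$ to the corresponding sequence of $\widetilde{G}$-modules, so $\delta$ is natural, giving $\pi^{*}\circ\delta=\delta\circ\pi^{*}$ and hence
\begin{equation*}
\pi^{*}[m]=\pi^{*}\delta[\tilde\gamma]=\delta\bigl(\pi^{*}[\tilde\gamma]\bigr)\text{.}
\end{equation*}
Next I would observe that $\pi^{*}[\tilde\gamma]=0$ in $\h^2(\widetilde{G},\Z^{2n}/2\Z^{2n})$: pulling the extension $\widetilde{G}\to G$ back along its own projection produces the fibre product $\widetilde{G}\times_{G}\widetilde{G}\to\widetilde{G}$, which is split by the diagonal $\widetilde{g}\mapsto(\widetilde{g},\widetilde{g})$, and a split extension has trivial class. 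Combining the two steps gives $\pi^{*}[m]=\delta(0)=0$.

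The main obstacle is not any computation but making the naturality of the $k$-invariant precise, namely checking that the $k$-invariant of the pulled-back $2$-group extension equals $\pi^{*}[m]$. This is most easily verified by tracing through the construction of \cref{lem:class}: a section $S$ with $S(1)=1$ and a choice of isomorphisms $\beta_{A,B}$ for $\AUT(\TD[n])$ lift along $\pi$ to admissible data $\widetilde{S}(\widetilde{g}):=(S(\pi(\widetilde{g})),\widetilde{g})$ and $\widetilde{\beta}_{\widetilde{g}_1,\widetilde{g}_2}:=\beta_{\pi(\widetilde{g}_1),\pi(\widetilde{g}_2)}$ for $\pi^{*}\AUT(\TD[n])$, so that the representing $3$-cocycle of the pullback is $m$ precomposed with $\pi$ on each of its three arguments; passing to cohomology classes yields exactly $\pi^{*}[m]$. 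The naturality of the Bockstein $\delta$ and the diagonal splitting of a self-pullback are entirely standard in group cohomology, so once this identification is in place the conclusion is immediate, and the final appeal to \cref{th:class} --- or equivalently to the last sentence of \cref{lem:class}, which states that a vanishing $\xi$ makes the extension split --- upgrades the vanishing $k$-invariant into triviality of the pulled-back extension itself.
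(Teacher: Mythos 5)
Your proof is correct and is precisely the argument the paper has in mind: the paper offers no written proof, calling the statement ``an elementary consequence'' of the identification $[m]=\delta[\tilde\gamma]$, and your writeup supplies exactly the intended chain --- naturality of the $k$-invariant under pullback, naturality of the Bockstein, and the diagonal splitting of an extension pulled back along its own projection. Nothing to add.
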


The Bachelor's thesis of Berkenhagen \cite{Berkenhagen2021} produces a  canonical trivialization of the pullback of $\AUT(\TD[n])$ to $\widetilde{\mathrm{O}^{\pm}(n,n,\Z)}$. It equips the pullback of the section $S$ to $\widetilde{\mathrm{O}^{\pm}(n,n,\Z)}$ with modified isomorphisms $\tilde \beta_{A,B}$ turning it into a 2-group homomorphism
\begin{equation*}
S: \widetilde{\mathrm{O}^{\pm}(n,n,\Z)}_{dis} \to \AUT(\TD[n])\text{.}
\end{equation*}
Thus, the group $\widetilde{\mathrm{O}^{\pm}(n,n,\Z)}$ acts coherently on $\TD[n]$, instead of the group $\mathrm{O}^{\pm}(n,n,\Z)$ which only acts by homotopy equivalences. In order to further use this action, it would be good to have a geometric interpretation of the group $\widetilde{\mathrm{O}^{\pm}(n,n,\Z)}$, for example, as the automorphism group of some geometric object. For the moment, this has to be left to further research.   

\section{Automorphisms and T-duality correspondences}

\label{sec:Tautomorphisms}

As recalled in \cref{sec:intro}, the T-duality 2-group $\TD[n]$ classifies topological T-duality correspondences of $\T^{n}$-bundles. The equivalence of \cref{eq:eq} induces a bijection
\begin{equation*}
\h^1(X,\TD[n]) \cong \hc 0 (\tcor_n(X))
\end{equation*} 
between the  \quot{non-abelian} cohomology of $X$ with values in $\TD[n]$, and equivalence classes of topological T-duality correspondences of $\T^{n}$-bundles over $X$.
 In \cref{sec:class} we formulate this bijection in concrete terms, so that in \cref{sec:actions} we are in position   to identify how certain automorphisms of $\TD[n]$ act geometrically on T-duality correspondences.  

\subsection{Local data for T-duality correspondences}

\label{sec:class}

We consider cocycles in the cohomology $\h^1(X,\TD[n])$ of a smooth manifold $X$ with coefficients in $\TD[n]$. The theory behind this is basically the one of Giraud \cite{giraud} and is  explained in  modern context in \cite[§A.3]{Nikolause}. A cocycle with respect to an open cover $\{U_i\}_{i\in I}$ of $X$  is (see \cite[Rem. 3.7]{Nikolause}) a tuple $(a,\hat a,m,\hat m,t)$ consisting of numbers $m_{ijk},\hat m_{ijk} \in \Z^{n}$  and smooth maps
\begin{equation*}
a_{ij},\hat a_{ij}: U_i \cap U_j \to \R^{n}
\quomma
t_{ijk}:U_i \cap U_j \cap U_k \to \ueins
\end{equation*}
satisfying the following cocycle conditions:
\begin{align*}
a_{ik} &= m_{ijk} + a_{jk}+a_{ij}
\\
\hat a_{ik} &= \hat m_{ijk} + \hat a_{jk}+\hat a_{ij}
\\
m_{ikl}+m_{ijk} &= m_{ijl}+m_{jkl}
\\
\hat m_{ikl}+\hat m_{ijk} &= \hat m_{ijl}+\hat m_{jkl}
\\
t_{ikl}+t_{ijk}-m_{ijk}\hat a_{kl} &= t_{ijl}+t_{jkl}\text{.}
\end{align*}
Here, the expression $m_{ijk}\hat a_{kl}$ denotes the standard scalar product, and $\ueins = \R/\Z$ is written additively as before. Moreover, we have tacitly assumed that the 3-fold intersections of our open cover are connected; in general, the numbers $m_{ijk}$ would be $\Z$-valued functions. 

For the definition of T-duality correspondences we refer to \cite[§3.1]{Nikolause}; these are equivalent (\cite[Prop. 3.11]{Nikolause}) to the one of Bunke-Rumpf-Schick \cite{bunke2006a}, which are in turn equivalent to the one of Mathai-Rosenberg \cite{Mathai2006}.
The relation between $\TD[n]$-cocycles and T-duality correspondences is described in \cite{Nikolause} only in an indirect way from a stack-theoretical perspective. Below we give a direct construction; a more elaborate discussion on these matters will appear elsewhere.

From a $\TD[n]$-cocycle as above we  reconstruct principal $\T^{n}$-bundles $E$ and $\hat E$ over $X$ with transition functions $e^{2\pi \im a_{ij}}$ and $e^{2\pi \im \hat a_{ij}}$, respectively. Note that these come with canonical trivializations $\varphi_i$ and $\hat\varphi_i$ over the open sets $U_i$. 
The next step is to construct bundle gerbes $\mathcal{G}$ and $\widehat{\mathcal{G}}$ over $E$ and $\hat E$, respectively. We consider 
\begin{equation*}
Y := \coprod_{i\in I} U_i \times \T^{n}
\end{equation*}
and define the surjective submersions $\pi:Y \to E$ and $\hat\pi: Y \to \hat E$ by $\pi(i,x,a):=\varphi_i(x,a)$ and $\hat\pi(i,x,a):=\hat\varphi_i(x,a)$. We may then identify the fibre products of $\pi:Y\to E$ with 
\begin{equation*}
Y^{[k]} \cong \coprod_{(i_1,...,i_k) \in I^{k}} Y_{i_1,...,i_k}
\quand
Y_{i_1,...,i_k} := (U_{i_1}\cap ...\cap U_{i_k} )\times \T^{n}\text{,} 
\end{equation*}
by sending $((i_1,x,a_1),...,(i_k,x,a_k)) \in Y^{[k]}$ to $(x,a_1)\in Y_{i_1,...,i_k}$. This establishes a diffeomorphism as the fibre product condition infers $a_j = a_1+a_{i_1i_j}(x)$.
Thus, under this identification, the projection maps $\pr_j: Y^{[k]} \to Y$ are given by
\begin{equation*}
\pr_j|_{Y_{i_1,...,i_k}}(x,a) := (i_j,x,a + a_{i_1i_j}(x))\text{.}
\end{equation*}
The fibre products of $\hat \pi:Y \to \hat E$ will be identified with the same manifold, but the projection maps are here given by  
\begin{equation*}
\hat{\pr}_j|_{Y_{i_1,...,i_k}}(x,\hat a) := (i_j,x,\hat a + \hat a_{i_1i_j}(x))\text{.}
\end{equation*}
Now, cocycles $\beta,\hat\beta: Y^{[3]} \to \ueins$ w.r.t. the surjective submersions $\pi$ and $\hat\pi$, respectively, are defined by
\begin{align}
\label{eq:cocycle:lele}
\beta_{ijk} &: Y_{ijk} \to \ueins: (x,a) \mapsto -t_{ijk}(x)-a\hat m_{ijk}+a_{ij}(x)\hat a_{jk}(x)
\\
\label{eq:cocycle:rile}
\hat \beta_{ijk} &: Y_{ijk} \to \ueins: (x,\hat a) \mapsto -t_{ijk}(x)-m_{ijk}(\hat a_{ik}(x)+\hat a)\text{.}
\end{align}
One may check explicitly using the $\TD[n]$-cocycle conditions and above formulas for the projections that $\beta$ and $\hat\beta$ satisfy the cocycle conditions. Hence, we have defined bundle gerbes $\mathcal{G}$ over $E$ and $\widehat{\mathcal{G}}$ over $\hat E$.

It remains to construct a bundle gerbe isomorphism $\mathcal{D}: \pr^{*}\mathcal{G} \to \hat\pr^{*}\widehat{\mathcal{G}}$ over the correspondence space $E \times_X \hat E$, establishing T-duality.  For this purpose, we look at the diagram
\begin{equation*}
\alxydim{}{& Z \ar[d]^{\zeta} \ar[dl]_{\pr'} \ar[dr]^{\hat\pr'} \\ Y \ar[d]_{\pi} & E \times_X \hat E \ar[dr]^{\hat \pr} \ar[dl]_{\pr} & Y \ar[d]^{\hat \pi} \\ E \ar[dr]_{p} && \hat E \ar[dl]^{\hat p} \\ & X}
\end{equation*}
where 
\begin{equation*}
Z := \coprod_{i\in I}  U_i \times \R^{2n}\text{,}
\end{equation*}
and the maps $\zeta$, $\pr'$, and $\hat\pr'$ are defined by
$\zeta (i,x,a,\hat a) := (\varphi_i(x,a),\hat\varphi_i(x,\hat a))$, $\pr' (i,x,a,\hat a) := (i,x,a)$, and $\hat\pr' (i,x,a,\hat a) := (i,x,\hat a)$. The commutativity of the diagram shows that the pullbacks of the bundle gerbes $\mathcal{G}$ and $\widehat{\mathcal{G}}$ to the correspondence space $E \times_X \hat E$ can be represented by cocycles w.r.t. the surjective submersion $\zeta$. 

The fibre products of $\zeta:Z \to E \times_X \hat E$ can be written as
\begin{equation*}
Z^{[k]} \cong \coprod_{i_1,...,i_k} Z_{i_1,...,i_k}
\quith Z_{i_1,...,i_k}:= U_{i_1}\cap ... \cap U_{i_k}  \times \R^{2n} \times \underbrace{\Z^{2n}\times ... \times \Z^{2n}}_{(k-1)\text{ times}}
\end{equation*}
in such a way that an element $((i_1,x,a_1,\hat a_1),...,(i_k,x,a_k,\hat a_k))\in Z^{[k]}$ corresponds to 
\begin{equation*}
(i_1,...,i_k,x,a_1,\hat a_1,m_2,\hat m_2,...,m_k,\hat m_k)\in Z_{i_1,...,i_k}
\end{equation*}
where the integers  are defined by equations $a_{p}=a_{1}+a_{i_1i_p}(x)+m_{p}$ and $\hat a_{p}=\hat a_{1}+\hat a_{i_1i_p}(x)+\hat m_{p}$ for $2\leq p \leq k$. Under this identification, the projections $\pr_j: Z^{[k]} \to Z$ are given by
\begin{align*}
&\pr_j(i_1,...,i_k,x,a,\hat a,m_2,\hat m_2,...,m_k,\hat m_k):=(i_{j},x,a+a_{i_1i_{j}}(x)+m_{j},\hat a+\hat a_{i_1i_{j}}(x)+\hat m_{j})\text{.}
\end{align*}
We define $\xi:Z^{[2]} \to \ueins$ by
\begin{equation}
\xi_{ij}:Z_{ij} \to \ueins: (i,j,x,a,\hat a,m_2,\hat m_2) \mapsto -m_2\hat a- \hat a_{ij}(x) m_2-  \hat a_{ij}(x)a\text{.}
\end{equation}
One can now check that over $Z^{[3]}$ the equality
\begin{equation*}
\hat\pr'^{*}\hat\beta - \pr'^{*}\beta = \pr_{12}^{*}\xi +\pr_{23}^{*}\xi - \pr_{13}^{*}\xi 
\end{equation*}
holds, thus defining the required isomorphism $\mathcal{D}$.
Finally, it remains to check that $\mathcal{D}$ satisfies the \quot{Poincaré condition}. Indeed, upon restricting to a single open set $U_i$ (equipped with the cover by a single open set, itself), the 3-cocycles \cref{eq:cocycle:lele,eq:cocycle:rile} become identically zero (we may assume as usual that the data $t_{ijk}$, $m_{ijk}$, $\hat m_{ijk}$, $a_{ij}$, and $\hat a_{ij}$ of a $\TD[n]$-cocycle are  all anti-symmetric in the indices), and the 2-cochain $\xi$ becomes $(i,j,x,a,\hat a,m_2,\hat m_2)  \mapsto -m_2\hat a$. This is, in particular, independent of $x$ and thus is the pullback along $Z_i \to \T^{2n}$ of a 2-cocycle on $\T^{2n}$  w.r.t. to the surjective submersion $\R^{2n} \to \T^{2n}$. That 2-cocycle, $(a,\hat a,m,\hat m)  \mapsto -m\hat a$ is known to represent the Poincaré bundle over $\T^{2n}$. This completes the construction of a T-duality correspondence from a $\TD[n]$-cocycle.

\subsection{Geometric interpretation of some automorphisms}

\label{sec:actions}

Crossed  intertwiners $\Gamma \to \Gamma'$ induce maps $\h^1(X,\Gamma) \to \h^1(X,\Gamma)$ in cohomology, which can be represented explicitly on the level of cocycles, see \cite[§A.3]{Nikolause}. In case of a crossed intertwiner
\begin{equation*}
(\phi,f,\eta):\TD[n] \to \TD[n]
\end{equation*}
and a $\TD[n]$-cocycle $(a,\hat a,m,\hat m,t)$, the resulting $\TD[n]$-cocycle $(a',\hat a',m',\hat m', t')$ is given by the formulas 
\begin{align*}
a'_{ij} \oplus \hat a'_{ij} &:= \phi (a_{ij}\oplus \hat a_{ij})
\\
m'_{ijk}\oplus m_{ijk} &:= f_1(m_{ijk}\oplus \hat m_{ijk},t_{ijk})
\\
t'_{ijk} &:=  f_2(m_{ijk}\oplus \hat m_{ijk},t_{ijk})
\\&\qquad-\eta(m_{ijk}\oplus\hat m_{ijk},(a_{jk}+ a_{ij})\oplus (\hat a_{jk}+ \hat a_{ij}))-\eta(a_{jk}\oplus \hat a_{jk},a_{ij} \oplus \hat a_{ij})\text{;}
\end{align*} 
here, we have split $f=:(f_1,f_2)$ into maps $f_1: \Z^{2n} \times \ueins \to \Z^{2n}$ and $f_2: \Z^{2n} \times \ueins \to \ueins$.

These formula simplify slightly when we start with an object
$(A,\eta)$ in $\AU[n]$ and consider the crossed intertwiner  $\mathbb{I(A,\eta)}$ described in \cref{sec:realizationauto}, i.e., $\mathbb{I}(A,\eta)=(\phi,f,\eta)$ with  $\phi(a \oplus \hat a) := A(a\oplus \hat a)$ and   $f_{1}(m\oplus \hat m,t):= A(m\oplus \hat m)$, and $f_2(m\oplus \hat m,t):= \mathrm{iso}(A)t$. The resulting $\TD[n]$-cocycle is then 
\begin{align}
\label{eq:formulaaction:1}
a'_{ij} \oplus \hat a'_{ij} &:=A (a_{ij}\oplus \hat a_{ij})
\\
\label{eq:formulaaction:2}
m'_{ijk}\oplus m_{ijk} &:= A(m_{ijk}\oplus \hat m_{ijk})
\\\nonumber
t'_{ijk} &:= \mathrm{iso}(A)t_{ijk} -\eta(m_{ijk}\oplus\hat m_{ijk},(a_{jk}+ a_{ij})\oplus (\hat a_{jk}+ \hat a_{ij}))-\eta(a_{jk}\oplus \hat a_{jk},a_{ij} \oplus \hat a_{ij})\text{.}
\end{align} 

We can go further and use the section $S: \mathrm{O}^{\pm}(n,n,\Z) \to \AU[n]$ defined in \cref{lem:section} in order to get a crossed intertwiner $\mathbb{I}(S(A))$ associated to each $A\in \mathrm{O}^{\pm}(n,n,\Z)$ in a canonical way. The formulas are the same as above, just that the general $\eta$ is replaced by $\eta_A$ defined in \cref{def:etaA}. Since $\eta_A$ is bilinear and satisfies \cref{AU:1}, the formula for $t'_{ijk}$ simplifies slightly to
\begin{equation}
\label{eq:formulaaction:3}
t'_{ijk} := \mathrm{iso}(A)t_{ijk} -\eta_A(m_{ijk}\oplus\hat m_{ijk},a_{ik}\oplus \hat a_{ik})-\eta_A(a_{jk}\oplus \hat a_{jk},a_{ij} \oplus \hat a_{ij})\text{.}
\end{equation}

One can now reconstruct from the original cocycle $(a,\hat a,m,\hat m,t)$ and from the new cocycle $(a',\hat a',m',\hat m',t')$ topological T-duality correspondences as described in \cref{sec:class} and  observe the geometric effect. For a general element $A\in \mathrm{O}^{\pm}(n,n,\Z)$
this seems impossible, but below we manage to describe this effect in those situations where the k-invariant of $\AUT(\TD[n])$ vanishes according to our calculations in \cref{sec:splitting:multiplicativity}. First, we look at the subgroup $Z=\left \langle I  \right \rangle\cong \Z/2\Z$ of $\mathrm{O}(n,n,\Z)$ introduced in \cref{subgroup-Z}, and its action on $\TD_n$ induced along the   2-group homomorphism $\mathbb{I}_S:Z \to \AUT(\TD_n)$ of \cref{prop:actions}.

\begin{proposition}
\label{prop:actionI}
The action of $Z \cong \Z/2\Z$ on $\TD[n]$ flips the two legs of a T-duality correspondence.   
\end{proposition}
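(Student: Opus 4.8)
I want to show that applying the automorphism $\mathbb{I}(S(I))$ to a $\TD[n]$-cocycle $(a,\hat a,m,\hat m,t)$ produces (up to isomorphism of T-duality correspondences) the cocycle with the roles of $E$ and $\hat E$ interchanged. The strategy is entirely computational and local: I will feed the matrix $A=I$ into the explicit cocycle-transformation formulas \cref{eq:formulaaction:1,eq:formulaaction:2,eq:formulaaction:3}, read off the new cocycle, and then compare the two T-duality correspondences reconstructed via the recipe of \cref{sec:class}.

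**Key steps.** First I would substitute $A=I$ into the action formulas. Since $I$ swaps the first and last $n$ coordinates and $\mathrm{iso}(I)=1$, \cref{eq:formulaaction:1,eq:formulaaction:2} give immediately $a'_{ij}\oplus \hat a'_{ij}=\hat a_{ij}\oplus a_{ij}$ and $m'_{ijk}\oplus \hat m'_{ijk}=\hat m_{ijk}\oplus m_{ijk}$; that is, the hatted and unhatted data are exchanged, which already matches the expected swap of the bundles $E$ and $\hat E$. Second, for the connective datum $t'$, I use that by \cref{ex:split:c} we have $\eta_I(a,a')=[a,a']=\langle a|J|a'\rangle$, so \cref{eq:formulaaction:3} becomes
\begin{equation*}
t'_{ijk}=t_{ijk}-[\,m_{ijk}\oplus\hat m_{ijk},\,a_{ik}\oplus\hat a_{ik}\,]-[\,a_{jk}\oplus\hat a_{jk},\,a_{ij}\oplus\hat a_{ij}\,]\text{.}
\end{equation*}
Unwinding $[-,-]$ via the matrix $J$ (which pairs the lower $n$ components of the left argument against the upper $n$ components of the right), this evaluates to $t_{ijk}-\hat m_{ijk}a_{ik}-\hat a_{jk}a_{ij}$ in the scalar-product notation. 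Third, I would plug the swapped cocycle into the construction of \cref{sec:class} and observe that, under the exchange $a\leftrightarrow\hat a$, $m\leftrightarrow\hat m$, the two cocycles \cref{eq:cocycle:lele} and \cref{eq:cocycle:rile} for the gerbes $\mathcal{G}$ and $\widehat{\mathcal{G}}$ go into each other, and likewise the roles of $\pi$ and $\hat\pi$ and of $\pr'$ and $\hat\pr'$ in the correspondence diagram are interchanged. This is exactly the statement that the new correspondence is the old one with its left and right legs swapped.

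**The main obstacle.** The genuinely delicate point is bookkeeping on $t'$: the naive swap of the data does \emph{not} literally produce the cocycle datum $\hat\beta$ from $\beta$ on the nose, because the left-triangular convention $\lbraket{-}{-}{-}$ and the specific correction terms in \cref{eq:cocycle:lele,eq:cocycle:rile} are asymmetric. I expect to need a coboundary—an explicit $1$-cochain on the cover, built from products like $a_{ij}\hat a_{jk}$—to identify the transformed gerbe cocycle with the swapped one. Verifying that this coboundary is consistent with the isomorphism $\mathcal{D}$ (and preserves the Poincar\'e condition on each $U_i$) is where the real work lies; the matrix substitutions themselves are routine. I would carry out this comparison explicitly, checking that the transformed isomorphism $\mathcal{D}'$ agrees with the swap of $\mathcal{D}$ up to the canonical symmetry of the Poincar\'e bundle $(a,\hat a)\mapsto -m\hat a$ on $\T^{2n}$, completing the identification of legs.
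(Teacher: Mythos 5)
Your proposal follows essentially the same route as the paper: substitute $A=I$ into \cref{eq:formulaaction:1,eq:formulaaction:2,eq:formulaaction:3} using $\mathrm{iso}(I)=1$ and $\eta_I=[-,-]$ to get the swapped cocycle with $t'_{ijk}=t_{ijk}-\hat m_{ijk}a_{ik}-\hat a_{jk}a_{ij}$, then compare the transformed gerbe cocycles with \cref{eq:cocycle:lele,eq:cocycle:rile} and absorb the discrepancy on one leg into a coboundary (the paper's is the $1$-cochain $(x,a)\mapsto a_{ij}(x)\hat a_{ij}(x)$, very much the kind of term you anticipate). Your extra remarks about checking the correspondence isomorphism $\mathcal{D}$ and the Poincar\'e condition go slightly beyond what the paper's proof records, but the core argument is the same.
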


\begin{proof}
We remark that we have already seen in \cref{ex:flipautomorphism} that the crossed intertwiner  $\mathbb{I}(S(I))$ coincides with the crossed intertwiner called  $flip$ of \cref{ex:flip}. Formulas \cref{eq:formulaaction:1,eq:formulaaction:2,eq:formulaaction:3} show that
$a'_{ij}=\hat a_{ij}$, $\hat a'_{ij} = a_{ij}$, $m'_{ijk} = \hat m_{ijk}$, and $\hat m_{ijk}' = m_{ijk}$,
as well as
\begin{align*}
t'_{ijk} &= t_{ijk} - [m_{ijk}\oplus\hat m_{ijk},a_{ik}\oplus \hat a_{ik}]- [a_{jk}\oplus \hat a_{jk},a_{ij} \oplus \hat a_{ij}]
=t_{ijk} - \hat m_{ijk}a_{ik}-  \hat a_{jk}a_{ij}\text{.} 
\end{align*}
It is hence clear that the torus bundles $E$ and $\hat E$ become exchanged. The  bundle gerbe cocycle for the new left leg is, according to \cref{eq:cocycle:lele},
\begin{align*}
\beta_{ijk}'(x,a) &=-t'_{ijk}(x)-a\hat m_{ijk}'+a_{ij}'(x)\hat a_{jk}'(x)
\\&= -t_{ijk} (x)+ \hat m_{ijk}a_{ik}(x)+  \hat a_{jk}(x)a_{ij} (x)-am_{ijk}+\hat a_{ij}(x)a_{jk}(x)
 \\&= \hat \beta_{ijk}(x,a)-a_{ij}(x)\hat a_{ij}(x)-a_{jk}(x)\hat a_{jk}(x)+a_{ik}(x)\hat a_{ik}(x)\text{,} 
\end{align*}
and it hence differs from $\hat\beta_{ijk}$ by the coboundary of $(x,a)\mapsto a_{ij}(x)\hat a_{ij}(x)$. The bundle gerbe cocycle for the new right leg is, according to \cref{eq:cocycle:rile},
\begin{align*}
\hat \beta_{ijk}' (x,\hat a) &=-t'_{ijk}(x)-m_{ijk}'(\hat a_{ik}'(x)+\hat a)
\\&= -t_{ijk} (x)+ \hat m_{ijk}a_{ik}(x)+  \hat a_{jk}(x)a_{ij} (x)-\hat m_{ijk}(a_{ik}(x)+\hat a)
\\&= \beta_{ijk}(x,\hat a)
\end{align*}
This shows that the bundle gerbes are exchanged, too.
\end{proof}

The next result concerns the subgroup $\mathrm{GL}(n,\Z)$ of $\mathrm{O}(n,n,\Z)$ introduced in \cref{subgroup-GLnZ}, and its action on $\TD_n$ induced along the 2-group homomorphism $\mathbb{I}_S: \mathrm{GL}(n,\Z) \to \AUT(\TD_n)$ of \cref{prop:actions}.

\begin{proposition}
\label{prop:actionofGL}
The action of $A\in\mathrm{GL}(n,\Z)$ on $\TD[n]$ extends the torus bundles along the corresponding Lie group homomorphisms $A:\T^{n} \to \T^{n}$ and $A^{tr-1}:\T^{n} \to \T^{n}$, respectively, and pushes the bundle gerbes to these extensions.
\end{proposition}

\begin{proof}
For $A\in \mathrm{GL}(n,\Z)$ and the corresponding $D_A\in \mathrm{O}(n,n,\Z)$ we have $\eta_{D_A}=0$ by \cref{ex:split:d}. Hence,  \cref{eq:formulaaction:1,eq:formulaaction:2,eq:formulaaction:3} give
\begin{equation*}
a'_{ij} = Aa_{ij}
\quomma 
\hat a'_{ij} =A^{tr-1}\hat a_{ij}
\quomma
m'_{ijk}=Am_{ijk}
\quomma
\hat m'_{ijk}=A^{tr-1}\hat m_{ijk}
\quomma
t'_{ijk}=t_{ijk}\text{.}
\end{equation*}
We may consider $A$ as a Lie group isomorphism $A: \T^{n} \to \T^{n}$, and observe that the new torus bundles are \quot{extensions} along $A$ and $A^{tr-1}$ of the old ones, respectively, i.e.,
\begin{equation*}
E' = A_{*}(E)
\quand
\hat E' =A^{tr-1}_{*}(\hat E)\text{.}
\end{equation*} 
Note that such extensions come with diffeomorphisms $\alpha:E \to E'$ and $\hat\alpha: \hat E\to \hat E'$ that are equivariant along $A$ and $A^{tr-1}$, respectively, and respect the bundle projections to $X$. Also note that we have commutative diagrams
\begin{equation*}
\alxydim{@C=4em}{Y \ar[r]^{\id \times A} \ar[d]_{\pi} & Y \ar[d]^{\pi'} \\ E \ar[r]_{\alpha} & E'}
\quand
\alxydim{@C=4em}{Y \ar[r]^{\id \times A^{tr-1}} \ar[d]_{\hat \pi} & Y \ar[d]^{\hat \pi'} \\ \hat E \ar[r]_{\hat \alpha} & \hat E'\text{.}}
\end{equation*} 
The bundle gerbe cocycle for the new left leg is  according to \cref{eq:cocycle:lele}
\begin{align*}
\beta_{ijk}'(x,a) &=-t_{ijk}(x)-a(A^{tr-1}\hat m_{ijk})+(Aa_{ij}(x))(A^{tr-1}\hat a_{jk}(x))
\\&=-t_{ijk}(x)-(A^{-1}a)\hat m_{ijk}+a_{ij}(x)\hat a_{jk}(x)
\\&=\beta_{ijk}(x,A^{-1}a)\text{.}
\end{align*}
The bundle gerbe cocycle for the new right leg is according to \cref{eq:cocycle:rile}
\begin{align*}
\hat \beta_{ijk}' (x,\hat a) &=-t_{ijk}(x)-(Am_{ijk})(A^{tr-1}\hat a_{ik}'(x)+\hat a)
\\&= -t_{ijk}(x)-m_{ijk}(a_{ik}'(x)+A^{tr}\hat a)
\\&= \hat \beta_{ijk}(x,A^{tr}\hat a)\text{.}
\end{align*}
This means that $\mathcal{G} = \alpha^{*}\mathcal{G}'$ and $\widehat{\mathcal{G}} = \hat\alpha^{*}\widehat{\mathcal{G}}'$.
\end{proof}

We come to the Klein-4-subgroup $K$ of $\mathrm{O}^{\pm}(n,n,\Z)$ introduced in \cref{subgroup-K}, which acts again on $\TD_n$ along the 2-group homomorphism $\mathbb{I}_S: \mathrm{O}^{\pm}(n,n,\Z) \to \AUT(\TD_n)$ of \cref{prop:actions}.

\begin{proposition}
Under the action of $K$ on $\mathbb{TD}_n$, the generator $K_{-1,1}$ dualizes the left leg's torus bundle and dualizes both bundle gerbes, while the generator $K_{1,-1}$ dualizes the right leg's torus bundle and  dualizes both bundle gerbes. 
\end{proposition}

\begin{proof}
We have $\eta_{K_{\alpha,\hat\alpha}}=0$ by \cref{ex:split:g}. Hence,  \cref{eq:formulaaction:1,eq:formulaaction:2,eq:formulaaction:3} give
\begin{equation*}
a'_{ij} = \alpha a_{ij}
\quomma 
\hat a'_{ij} =\hat\alpha \hat a_{ij}
\quomma
m'_{ijk}=\alpha m_{ijk}
\quomma
\hat m'_{ijk}=\hat\alpha \hat m_{ijk}
\quomma
t'_{ijk}=\alpha\hat\alpha t_{ijk}\text{.}
\end{equation*}
Changing the sign of transitions functions corresponds to passing to the dual bundle; hence, $E' := E^{\alpha}$ and $\hat E':= E^{\hat\alpha}$. As in the proof of \cref{prop:actionofGL}, we have commutative diagrams 
\begin{equation*}
\alxydim{@C=4em}{Y \ar[r]^{\id \times \alpha} \ar[d]_{\pi} & Y \ar[d]^{\pi'} \\ E \ar@{=}[r] & E'}
\quand
\alxydim{@C=4em}{Y \ar[r]^{\id \times \hat\alpha} \ar[d]_{\hat \pi} & Y \ar[d]^{\hat \pi'} \\ \hat E \ar@{=}[r] & \hat E'\text{.}}
\end{equation*} 
\cref{eq:cocycle:lele,eq:cocycle:rile} lead to the formulas $\beta'_{ijk}(x,a)=\alpha\hat\alpha \beta_{ijk}(x,\alpha a)$ and $\hat\beta'_{ijk}(x,\hat a) = \alpha\hat\alpha \hat\beta_{ijk}(x,\hat\alpha\hat a)$. This means that $\mathcal{G}' := \mathcal{G}^{\alpha\hat\alpha}$ and $\hat{\mathcal{G}}':= \hat{\mathcal{G}}^{\alpha\hat\alpha}$. Here, $\mathcal{G}^{-1}$ denotes the dual bundle gerbe, while $\mathcal{G}^1=\mathcal{G}$. \end{proof}

Now we treat the special case of $n=1$, where $\mathrm{O}^{\pm}(1,1,\Z)=\left \langle I,R  \right \rangle \cong D_4$ is a dihedral group generated by one reflection, $I$, and one rotation $R$ (see \cref{ex:n=1}). While the reflection $I$ flips a T-duality correspondence just like for general $n$ (\cref{prop:actionI}), the rotation acts as follows.   

\begin{proposition}
Under the action of $\mathrm{O}^{\pm}(1,1,\Z)=\left \langle I,R  \right \rangle \cong D_4$ on $\TD[1]$, the rotation
\begin{equation*}
R= \begin{pmatrix}0 & -1 \\
1 & 0 \\
\end{pmatrix}
\end{equation*}
flips the two legs, dualizes the (new) left leg circle bundle, and dualizes both bundle gerbes. 
\end{proposition}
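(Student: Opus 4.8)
The plan is to factor $R$ through two simpler matrices whose geometric actions are either immediate or already established, and then invoke coherence for $n=1$. Concretely I would write
\[
R = I\cdot W,\qquad W := \begin{pmatrix} 1 & 0 \\ 0 & -1 \end{pmatrix},
\]
one of the eight matrices of \cref{ex:n=1}, with $\mathrm{iso}(W)=-1$. The decisive simplification is that $\B W = \mathrm{iso}(W)J - W^{tr}JW = -J-(-J)=0$, so by \cref{def:etaA} we have $\eta_W=0$: the crossed intertwiner $\mathbb{I}(W)$ carries no multiplicator correction. Since for $n=1$ the cocycle $m$ vanishes (\cref{prop:n1}), the section is a genuine $2$-group homomorphism (\cref{prop:actions}), so the action of $R$ on $\h^1(X,\TD[1])$ equals the action of $W$ followed by the action of the flip $I$.

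First I would read off the action of $W$ from \cref{eq:formulaaction:1,eq:formulaaction:2,eq:formulaaction:3}. Because $\phi_W(a\oplus\hat a)=a\oplus(-\hat a)$ and $\eta_W=0$, one obtains
\[
a'_{ij}=a_{ij},\quad \hat a'_{ij}=-\hat a_{ij},\quad m'_{ijk}=m_{ijk},\quad \hat m'_{ijk}=-\hat m_{ijk},\quad t'_{ijk}=-t_{ijk}.
\]
Thus the left leg $E$ is untouched while the right leg is replaced by its dual $\hat E^{\vee}$. Substituting into \cref{eq:cocycle:lele,eq:cocycle:rile} then gives $\beta'_{ijk}=-\beta_{ijk}$ and $\hat\beta'_{ijk}(x,\hat a)=-\hat\beta_{ijk}(x,-\hat a)$; that is, both bundle gerbes are dualized, the right one transported across the canonical identification $\hat E^{\vee}\cong\hat E$, $\hat a\mapsto -\hat a$. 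No coboundary intervenes at this step precisely because $\eta_W=0$, which is what makes the factorization worthwhile.

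Finally I would apply the flip, whose geometric effect is recorded in \cref{prop:actionI}: it exchanges the two legs, carrying the left gerbe to the old right one up to the coboundary of $(x,a)\mapsto a_{ij}(x)\hat a_{ij}(x)$ and the right gerbe to the old left one on the nose. Composing the two effects, the new left leg becomes $\hat E^{\vee}$ carrying $\widehat{\mathcal G}^{\vee}$ and the new right leg becomes $E$ carrying $\mathcal G^{\vee}$, which is exactly the assertion: the legs are flipped, the new left-leg circle bundle is dualized, and both gerbes are dualized. As a consistency check one may instead compute the $R$-action directly from $\mathrm{iso}(R)=-1$ and $\eta_R(x\oplus\hat x,x'\oplus\hat x')=-\hat x x'$ (\cref{ex:split:b}); this yields $E'=\hat E^{\vee}$, $\hat E'=E$, the exact identity $\hat\beta'_{ijk}=-\beta_{ijk}$, and agreement of $\beta'_{ijk}$ with the transported $\widehat{\mathcal G}^{\vee}$ up to $\delta(a_{ij}\hat a_{ij})$ — the very same coboundary produced by the flip. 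The main obstacle is bookkeeping: keeping the dualization identifications $a\mapsto -a$ consistent across the two surjective submersions, and verifying, via the $\TD[1]$-cocycle conditions (which reorganize the mixed index-pair products into the diagonal terms $a_{\bullet\bullet}\hat a_{\bullet\bullet}$ and kill the integral term $2m_{ijk}\hat m_{ijk}$), that the residual difference is genuinely the coboundary of $a_{ij}\hat a_{ij}$ rather than an unremovable function on the base.
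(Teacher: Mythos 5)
Your proof is correct, and it takes a genuinely different route from the paper. The paper computes the action of $R$ head-on: it reads off $\mathrm{iso}(R)=-1$ and $\eta_R=-[-,-]$ from \cref{ex:split:b}, transforms the cocycle directly, and then matches the resulting $\beta'$ and $\hat\beta'$ against the dualized gerbes (reusing the coboundary $\delta(a_{ij}\hat a_{ij})$ observation from the proof of \cref{prop:actionI}). You instead factor $R=I\cdot W$ with $W=\mathrm{diag}(1,-1)$, observe $\B W=0$ so that $W$ acts by the bare sign changes $(a,\hat a,m,\hat m,t)\mapsto(a,-\hat a,m,-\hat m,-t)$ (fix $E$, dualize $\hat E$, dualize both gerbes), and then compose with \cref{prop:actionI}. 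This is a clean reduction, and the composite reproduces the paper's cocycle exactly: one checks $\X IW=W^{tr}J W=-J=\B R_{low}$, so $H_{I,W}=0$ and in fact $S(I)\cdot S(W)=S(R)$ on the nose, whence the two cocycle transformations literally coincide (not merely up to coboundary). One small remark: your appeal to \cref{prop:n1} and \cref{prop:actions} is stronger than what you need — the vanishing of the 3-cocycle $m$ governs coherence across triples, whereas for composing two geometric effects on $\h^1(X,\TD[1])$ it suffices that $S(I)\cdot S(W)$ and $S(R)$ are isomorphic objects of $\mathbb{A}^{\pm}_1$ (which the $\beta_{A,B}$ always provide, and which here is even an equality). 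What your factorization buys is that all sign bookkeeping is isolated in the trivial $W$-step and the flip is quoted as a black box; what the paper's direct computation buys is independence from \cref{prop:actionI} and an explicit identification of the single coboundary that appears.
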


\begin{proof}
We have $\mathrm{iso}(R)=-1$ and $\eta_R = -[-,-]$ by \cref{ex:split:b}.
 \cref{eq:formulaaction:1,eq:formulaaction:2,eq:formulaaction:3} now give
$a'_{ij}=-\hat a_{ij}$, $\hat a'_{ij} = a_{ij}$, $m'_{ijk} =- \hat m_{ijk}$, and $\hat m_{ijk}' = m_{ijk}$
as well as
\begin{align*}
t'_{ijk} &= -t_{ijk} + \hat m_{ijk}a_{ik}+  \hat a_{jk}a_{ij}\text{.} 
\end{align*}
Thus, we have for the torus bundles $\hat E' = E$ and $E' = \hat E^{*}$, the dual principal bundle. The bundle gerbe cocycle for the new left leg is,  according to \cref{eq:cocycle:lele},
\begin{align*}
\beta_{ijk}'(x,a) &=-t'_{ijk}(x)-a\hat m_{ijk}'+a_{ij}'(x)\hat a_{jk}'(x)
\\&= t_{ijk} (x)- \hat m_{ijk}a_{ik}(x)-  \hat a_{jk}(x)a_{ij}(x)-am_{ijk}-\hat a_{ij}(x)a_{jk}(x)
 \\&= -\hat\beta_{ijk}(x,-a)-m_{ijk}\hat a_{ik}(x)- \hat m_{ijk}a_{ik}(x)-  \hat a_{jk}(x)a_{ij}(x)-\hat a_{ij}(x)a_{jk}(x)
 \\&=-\hat\beta_{ijk}(x,-a) +a_{ij}(x)\hat a_{ij}(x)+a_{jk}(x)\hat a_{jk}(x)-a_{ik}(x)\hat a_{ik}(x)\text{;}
\end{align*}
hence, up to a coboundary (see the proof of \cref{prop:actionI}), we have $\beta_{ijk}'(x,a) = -\hat\beta_{ijk}(x,-a)$.
The bundle gerbe cocycle for the new right leg is, according to \cref{eq:cocycle:rile},
\begin{align*}
\hat \beta_{ijk}' (x,\hat a) &=-t'_{ijk}(x)-m_{ijk}'(\hat a_{ik}'(x)+\hat a)
\\&= t_{ijk} - \hat m_{ijk}a_{ik}(x)-  \hat a_{jk}(x)a_{ij}(x)+\hat m_{ijk}(a_{ik}(x)+\hat a) 
\\&= t_{ijk} -  \hat a_{jk}(x)a_{ij}(x)+\hat m_{ijk}\hat a
\\&= -\beta_{ijk}(x,\hat a)\text{.} 
\end{align*}
We recall that the dual of a principal bundle has the same total space but the action is through inverses. 
We hence have commutative diagrams
\begin{equation*}
\alxydim{@C=4em}{Y \ar[r]^{\id} \ar[d]_{\hat \pi'} & Y \ar[d]^{\pi} \\ \hat E' \ar@{=}[r] & E}
\quand
\alxydim{@C=4em}{Y \ar[r]^{d} \ar[d]_{\pi'} & Y \ar[d]^{\hat \pi'} \\ E' \ar@{=}[r] & \hat E^{*}}
\end{equation*} 
in which the map $d$ is $d_i(x,a) := (x,-a)$. Thus, above cocycle equation mean that $\mathcal{G}' := \widehat{\mathcal{G}}^{*}$ and $\widehat {\mathcal{G}}' = \mathcal{G}^{*}$.
\end{proof}

Finally, we consider the  subgroup $\mathfrak{so}(n,\Z)$ of $\mathrm{O}(n,n,\Z)$ on $\TD_n$ (see \cref{subgroup-sonZ}). It acts again on $\TD_n$ via the 2-group homomorphism $\mathbb{I}_S: \mathfrak{so}(n,\Z) \to \AUT(\TD_n)$ of \cref{prop:actions}. In  \cite{Nikolause} such action was found and described \quot{manually}, and in \cite{bunke2006a}, a similar action of  $\mathfrak{so}(n,\Z)$ on T-duality triples was found.  

\begin{proposition}
\label{prop:inducedaction}
The action of $B\in \mathfrak{so}(n,\Z)$ on $\TD[n]$ coincides with the one considered in  \cite{Nikolause}, and moreover reproduces the action of $\mathfrak{so}(n,\Z)$ on T-duality triples considered in \cite{bunke2006a}. It fixes the left leg torus bundle $E$, and fixes  the left leg bundle gerbe $\mathcal{G}$ up to a canonical isomorphism. The underlying torus bundle of the new right leg is
\begin{equation*}
\hat E' = \hat E \otimes B_{*}(E)\text{,}
\end{equation*} 
where $B$ is considered (via matrix multiplication) as a group homomorphism $B:\T^{n} \to \T^{n}$, and $B_{*}(E)$ denotes the bundle extension along $B$.
Finally, the bundle gerbe $\widehat{\mathcal{G}}'$ of the new right leg is a pull-push of $\widehat{\mathcal{G}}$ along the span
\begin{equation*}
\alxydim{}{& \hat E \times_X B_{*}(E) \ar[dr]\ar[dl]_{\pr_{\hat E}} \\ \hat E && \hat E'}
\end{equation*} 
whose right leg is the canonical map $\hat E \times_X B_{*}(E) \to \hat E \otimes B_{*}(E)$ from the fibre product to the tensor product of abelian principal bundles. 
\end{proposition}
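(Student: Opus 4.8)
The plan is to push the section element $S(e^{B})$ through the explicit cocycle-level description of the action from \cref{sec:actions} and then reconstruct both legs geometrically via \cref{sec:class}. First I would substitute $A=e^{B}$ into \cref{eq:formulaaction:1,eq:formulaaction:2,eq:formulaaction:3}, using $\mathrm{iso}(e^{B})=1$ together with the explicit form $\eta_{e^{B}}(a\oplus\hat a,a'\oplus\hat a')=\lbraket{a}{B}{a'}$ from \cref{ex:split:e} and the identity $e^{B}(a\oplus\hat a)=a\oplus(Ba+\hat a)$. This yields the transformed $\TD[n]$-cocycle
\begin{equation*}
a'_{ij}=a_{ij},\quad \hat a'_{ij}=\hat a_{ij}+Ba_{ij},\quad m'_{ijk}=m_{ijk},\quad \hat m'_{ijk}=\hat m_{ijk}+Bm_{ijk},
\end{equation*}
together with $t'_{ijk}=t_{ijk}-\lbraket{m_{ijk}}{B}{a_{ik}}-\lbraket{a_{jk}}{B}{a_{ij}}$. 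From $a'_{ij}=a_{ij}$ I read off $E'=E$; from $\hat a'_{ij}=\hat a_{ij}+Ba_{ij}$ the transition functions of the new right bundle are those of $\hat E$ multiplied by $e^{2\pi\im Ba_{ij}}$, i.e.\ by the transition functions of the extension $B_{*}(E)$, whence $\hat E'=\hat E\otimes B_{*}(E)$.

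For the left leg gerbe I would compare the cocycle $\beta'$ obtained from the primed data via \cref{eq:cocycle:lele} with the original $\beta$. Because $\hat m'_{ijk}$ is again integral, the only fibrewise part of $\beta'-\beta$ is the character $-a^{tr}Bm_{ijk}$, all remaining terms being independent of the fibre coordinate $a$. The fibre-independent and $a$-linear contributions are disposed of by the coboundary of an explicit $1$-cochain on $Y^{[2]}$ (in the spirit of the coboundary $a_{ij}\hat a_{ij}$ used in \cref{prop:actionI}). The essential and most delicate point is the character term: $\{-Bm_{ijk}\}$ is $B$ applied to the Chern cocycle of $E$ and is not a coboundary of lattice-valued cochains, so it must be trivialized by a genuinely fibre-dependent cochain. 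Here the skew-symmetry of $B$ is decisive: $\beta'-\beta$ represents the difference $\mathrm{DD}(\mathcal{G}')-\mathrm{DD}(\mathcal{G})\in\h^3(E,\Z)$, and this class vanishes because the potential obstruction built from $-a^{tr}Bm_{ijk}$ in the Serre spectral sequence of $\pi:E\to X$ is the pairing of the skew form $B$ against the symmetric cup product $c_1^{(k)}(E)\cup c_1^{(l)}(E)$, hence zero. This produces the canonical isomorphism $\mathcal{G}\cong\mathcal{G}'$; I expect writing the explicit trivializing cochain for this character term to be the main obstacle of the whole proof.

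For the right leg I would realize the span $\hat E\xleftarrow{\pr_{\hat E}}\hat E\times_X B_{*}(E)\xrightarrow{q}\hat E'$ on local data, where $q$ is the tensoring quotient, adding $Ba$ to the $\hat E$-fibre coordinate. Pulling back both $\hat{\mathcal{G}}$ and $\hat{\mathcal{G}}'$ to the correspondence space along a common surjective submersion and substituting $\hat a'_{ik}+\hat a'=(\hat a_{ik}+\hat a)+B(a_{ik}+a)$ into \cref{eq:cocycle:rile}, the $\hat a$-dependent parts of $\pr_{\hat E}^{*}\hat{\mathcal{G}}$ and $q^{*}\hat{\mathcal{G}}'$ cancel and one is left with a fibre character $-m_{ijk}^{tr}Ba$ plus fibre-independent terms of exactly the same shape as for the left leg; the same skew-symmetry mechanism trivializes it. This shows $\pr_{\hat E}^{*}\hat{\mathcal{G}}\cong q^{*}\hat{\mathcal{G}}'$, i.e.\ that $\hat{\mathcal{G}}'$ is the pull-push of $\hat{\mathcal{G}}$ along the span.

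It remains to identify the action with the known ones. Since \cref{ex:splitting:so} gives $\beta_{e^{A},e^{B}}=0$, the restriction of $\mathbb{I}_S$ to $\mathfrak{so}(n,\Z)$ is strictly multiplicative, so $\mathbb{I}(S(e^{B}))$ is a genuine coherent action, and comparing the cocycle formulas above with the hands-on construction of \cite[\S4.1]{Nikolause} shows they agree. Finally, transporting the result through the equivalence between $\TD[n]$-cocycles and Bunke-Rumpf-Schick triples recalled in \cref{sec:class} identifies the induced operation on correspondences with the $\mathfrak{so}(n,\Z)$-operation of \cite[Thm.~2.24]{bunke2006a}, completing the proof.
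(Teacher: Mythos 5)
Your computational skeleton coincides with the paper's: the substitution of $S(e^{B})$ into \cref{eq:formulaaction:1,eq:formulaaction:2,eq:formulaaction:3}, the resulting primed cocycle, the conclusions $E'=E$ and $\hat E'=\hat E\otimes B_{*}(E)$, the use of \cref{ex:splitting:so} for strict multiplicativity over $\mathfrak{so}(n,\Z)$, and the identification with the two cited actions are all exactly what the paper does (it identifies the action with that of \cite{Nikolause} simply by observing that $\mathbb{I}(S(B))$ \emph{is} the crossed intertwiner defined there, and with \cite{bunke2006a} via the Chern class formula $\hat c_i'=\hat c_i+\sum_j B_{ij}c_j$).

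The divergence, and the gap, lies in how you establish the two gerbe isomorphisms. First, your splitting of $\beta'-\beta$ into a ``character term'' $-a^{tr}Bm_{ijk}$ and a remainder to be disposed of separately is not available: neither piece is separately a \v Cech cocycle (the coboundary of $(x,a)\mapsto -a^{tr}Bm_{ijk}$ equals $-a_{ij}^{tr}Bm_{jkl}\neq 0$, because the fibre coordinate shifts by $a_{ij}$ between charts), so no cohomology class can be attached to the character term alone. Second, the spectral-sequence mechanism you invoke is the wrong one: $\sum_{k,l}B_{kl}\,c_1^{(l)}\cup c_1^{(k)}=0$ is the value of the \emph{outgoing} differential $d_2\colon E_2^{2,1}\to E_2^{4,0}$ on the class $\sum_{k,l}B_{kl}\,c_1^{(l)}\otimes(\text{fibre class})_k$, and its vanishing shows that this class \emph{survives}, not that it dies; what actually kills it is that, precisely because $B$ is skew, the class lies in the image of the \emph{incoming} differential $d_2\colon E_2^{0,2}=\h^2(\T^{n})\to E_2^{2,1}$. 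Third, even a corrected class-level argument only yields \emph{existence} of an isomorphism, whereas the statement asserts a canonical one, and the pull-push claim for the right leg concerns a specific cocycle identity on $\hat E\times_X B_{*}(E)$, not an equality of classes. The paper instead works entirely on the cochain level: for the right leg it shows that the difference $\hat\beta'_{ijk}(x,a+b)-\hat\beta_{ijk}(x,a)$ computed from \cref{eq:cocycle:rile} is independent of $a$, descends to $B_{*}(E)$, and is trivialized explicitly by the cochain $a_{ij}(x)b$ together with a smooth $\R$-valued \v Cech cocycle $\varepsilon_{ijk}$, which is killed because the sheaf $\underline{\R}$ is fine; for the left leg it records the difference cocycle from \cref{eq:cocycle:lele} and refers to \cite[Lemma 4.3]{Nikolause} for the trivialization. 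You correctly flag the explicit trivializing cochain as the main obstacle, but the argument you offer in its place does not close it.
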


\begin{proof}
For $B \in \mathfrak{so}(n,\Z)$ we have $S(B)=(e^{B},\eta_{e^{B}})$ in $\AU[n]$, where $\eta_{e^{B}}(a \oplus \hat a,b \oplus \hat b) = \lbraket{a}{B}{b}$ according to \cref{ex:split:e}. The crossed intertwiner $\mathbb{I}(S(B)) =: (\phi_{e^{B}},f_{e^{B}},\eta_{e^{B}})$ is then given by \begin{equation*}
\phi_{e^{B}}(a \oplus \hat a) = e^{B}(a \oplus \hat a)=a \oplus (Ba+\hat a)
\end{equation*}
and 
\begin{equation*}
f_{e^{B}}(m\oplus \hat m,s)= (e^{B}(m\oplus \hat m),s)=(m\oplus (Bm+\hat m),s)\text{.}
\end{equation*}
Precisely this crossed intertwiner is defined in \cite[Sec. 4.1]{Nikolause} and proves the claimed coincidence.
 \cref{eq:formulaaction:1,eq:formulaaction:2,eq:formulaaction:3}  give  
$a_{ij}'=a_{ij}$, $\hat a_{ij}' = Ba_{ij}+\hat a_{ij}$, $m'_{ijk}=m_{ijk}$, and $\hat m_{ijk}'=Bm_{ijk}+\hat m_{ijk}$, as well as
\begin{align*}
t'_{ijk} &:=  t_{ijk}-\lbraket{m_{ijk}}{B}{a_{ik}}-\lbraket{a_{jk}}{B}{a_{ij}} \text{.}
\end{align*} 
We see that the left leg torus bundle is unchanged, while the transition functions of the new right leg
torus bundle show the claim that $\hat E' = B_{*}(E) \otimes \hat E$. 
For the $n$ Chern classes $\hat c'_i$ of $\hat E'$ and $\hat c_i$ of $\hat E$, this implies the formula
\begin{equation*}
\hat c_i' = \hat c_i + \sum_{j=1}^{n}B_{ij} c_j\text{,}
\end{equation*}
which appears in \cite[Thm. 2.24]{bunke2006a}. This shows the next claimed coincidence. It remains to look at the bundle gerbes.

For the left leg, a computation using \cref{eq:cocycle:lele} shows that
\begin{equation*}
\beta_{ijk}'(x,a)= \beta_{ijk}(x,a)+\lbraket{m_{ijk}}{B}{a_{ik}}+\lbraket{a_{ij}}{B}{a_{jk}}-a(Bm_{ijk})
\end{equation*}
One can show that the three last terms form a coboundary, showing that $\mathcal{G}'\cong \mathcal{G}$. Since this also follows abstractly from \cite[Lemma 4.3]{Nikolause}, we can omit an explicit discussion. For the right leg, another computation using \cref{eq:cocycle:rile} shows that
\begin{equation*}
\hat \beta_{ijk}' (x,\hat a) = -t_{ijk}(x)-m_{ijk}(\hat a_{ik}(x)+a)+\lbraket{a_{ik}}{B}{m_{ijk}}+\lbraket{a_{jk}}{B}{a_{ij}}\text{.}
\end{equation*}
We consider the commutative diagram
\begin{equation*}
\alxydim{}{Y \ar[d] & \coprod U_i \times \T^{2n} \ar[r] \ar[l] \ar[d] & Y \ar[d] \\ \hat E  & \hat E \times_X B_{*}(E) \ar[r]\ar[l] & \hat E \otimes B_{*}(E)\text{,}}
\end{equation*}
which lifts the span from the claim of the proposition to the surjective submersions of bundle gerbes, 
where the maps on top are given by
\begin{equation*}
\alxydim{}{(x,i,a) & (i,x,a,b) \ar@{|->}[r] \ar@{|->}[l] & (x,i,a+b)\text{.}}
\end{equation*}
We may now analyze the pullbacks of the cocycles $\hat\beta_{ijk}$ for $\widehat{\mathcal{G}}$ and $\hat\beta'_{ijk}$ for $\widehat{\mathcal{G}}'$ to the middle. In order to prove the claim that $\widehat{\mathcal{G}}'$ is a pull-push of $\widehat{\mathcal{G}}$ we need to show that the difference of these pullbacks,
\begin{equation*}
\tilde\gamma_{ijk}(x,a,b) := \hat\beta'_{ijk}(x,a+b) - \hat\beta_{ijk}(x,a)\text{,}
\end{equation*}
is a coboundary. A computation shows that
\begin{equation*}
\tilde\gamma_{ijk}(x,a,b)=  \lbraket{a_{ik}(x)}{B}{m_{ijk}}+\lbraket{a_{jk}(x)}{B}{a_{ij}(x)} -m_{ijk}b\text{,}
\end{equation*}
which is independent of $a$. 
In fact, $\gamma_{ijk}(x,b) := \tilde\gamma_{ijk}(x,0,b)$ defines a cocycle for a bundle gerbe over $B_{*}(E)$, pulling back to $\tilde\gamma_{ijk}$ under the projection $E \times_X B_{*}(E) \to B_{*}(E)$, and now we have to show that $\gamma_{ijk}$ is a coboundary.
For this purpose, a further computation shows that
\begin{equation*}
\gamma_{ijk}(x,b) = \tilde\alpha_{ij}(x,b)+\tilde\alpha_{jk}(x,b+Ba_{ij}(x))-\tilde\alpha_{ik}(x,b)+\varepsilon_{ijk}(x)\text{,}
\end{equation*} 
for $\tilde\alpha_{ij}(x,b) := a_{ij}(x)b$ and 
\begin{align*}
\varepsilon_{ijk}(x) &:= \lbraket{a_{ik}(x)}{B}{m_{ijk}}+\lbraket{a_{ij}(x)}{B}{a_{jk}(x)}\text{.}
\end{align*}
One can check that $\varepsilon_{ijk}$ is a smooth $\R$-valued \v Cech 3-cocycle on $X$, and hence trivializable as the sheaf $\underline{\R}$ is fine. 
All together, this proves that $\gamma_{ijk}$ is a coboundary.
\end{proof}

\bibliographystyle{kobib}
\bibliography{kobib}

\end{document}